\DeclareMathOperator{\Dom}{Dom}
\DeclareMathOperator{\Ker}{Ker}
\DeclareMathOperator{\supp}{supp}
\renewcommand{\Re}{\operatorname{Re}}
\renewcommand{\Im}{\operatorname{Im}}
\newcommand{\abs}[1]{\lvert#1\rvert}
\newcommand{\Abs}[1]{\left\lvert#1\right\rvert}
\newcommand{\norm}[1]{\lVert#1\rVert}
\newcommand{\jap}[1]{\langle#1\rangle}
\newcommand{\comp}{{\mathrm{comp}}}
\newcommand{\bbR}{{\mathbb R}}
\newcommand{\bbC}{{\mathbb C}}
\newcommand{\bbZ}{{\mathbb Z}}
\newcommand{\bbP}{{\mathbb P}}
\newcommand{\calF}{\mathcal{F}}
\newcommand{\calL}{\mathcal{L}}
\newcommand{\calM}{\mathcal{M}}
\newcommand{\calD}{\mathcal{D}}
\newcommand{\calR}{\mathcal{R}}
\newcommand{\dd}{\mathrm d}
\numberwithin{equation}{section}
\theoremstyle{plain}
\newtheorem{theorem}{\bf Theorem}[section]
\newtheorem*{theorem*}{Theorem}
\newtheorem{lemma}[theorem]{\bf Lemma}
\newtheorem{proposition}[theorem]{\bf Proposition}
\newtheorem*{proposition*}{\bf Proposition}
\theoremstyle{definition}
\newtheorem*{definition*}{\bf Definition}
\theoremstyle{remark}
\newtheorem*{remark*}{\bf Remark}
\newtheorem*{example*}{\bf Example}
\newcommand{\eps}{\varepsilon}
\newcommand*\bigcdot{\mathpalette\bigcdot@{.5}}
\newcommand*\bigcdot@[2]{\mathbin{\vcenter{\hbox{\scalebox{#2}{\,\,$\m@th#1\bullet$\,\,}}}}}
\begin{document}

\title[Unbounded integral Hankel operators]{Unbounded integral Hankel operators}

\author{Alexander Pushnitski}
\address{Department of Mathematics, King's College London, Strand, London, WC2R~2LS, U.K.}
\email{alexander.pushnitski@kcl.ac.uk}

\author{Sergei Treil}
\address{Department of Mathematics, Brown University, Providence, RI 02912, USA}
\email{treil@math.brown.edu \textrm{(S.\ Treil)}}
\thanks{ST is partially supported by the NSF grant DMS-2154321}

\date{\today}

\dedicatory{To the memory of Dima Yafaev (1948--2024)}

\begin{abstract}
For a wide class of unbounded integral Hankel operators on the positive half-line, we prove essential self-adjointness on the set of smooth compactly supported functions. 
\end{abstract}

\maketitle

\section{Introduction}\label{sec.z}

\subsection{Foreword}
During 2010--2018, Dima Yafaev worked intensively on the subject of Hankel operators. 
In particular, in \cite{Ya1,Ya2,Ya3,Ya4} he addressed the question of the precise definition of unbounded Hankel operators. In this paper, we continue this line of research by taking the next natural steps in the programme initiated by Dima.

\subsection{Hankel operators: discrete and continuous realisations}
We denote 
\[
\bbZ_+=\{0,1,2,\dots\}\text{ and }\bbR_+=(0,\infty)
\text{ and write }
\ell^2:=\ell^2(\bbZ_+)\text{ and }L^2:=L^2(\bbR_+).
\]
Hankel operators can be defined in two equivalent ways: as infinite matrices acting on $\ell^2$ (discrete realisation) and as integral operators acting on $L^2$ (continuous realisation). 

\emph{Discrete realisation:}
for a sequence $\{h_j\}_{j\in\bbZ_+}$ of complex numbers, we will denote by $\gamma_h$ the (possibly unbounded) operator on $\ell^2$, defined on a suitable set of sequences $a\in\ell^2$ by 
\begin{equation}
(\gamma_h a)_k=\sum_{j=0}^\infty h_{k+j}a_j, \quad k\in\bbZ_+\, .
\label{z9}
\end{equation}
In other words, with respect to the standard basis of $\ell^2$, we can identify $\gamma_h$ with the matrix
\[
\gamma_h=\{h_{k+j}\}_{j,k\in\bbZ_+}.
\]

\emph{Continuous realisation:}
if $h:\bbR_+\to\bbC$ is a function, we will denote by $\Gamma_h$ the (possibly unbounded) operator on $L^2$, defined by 
\begin{equation}
(\Gamma_h f)(t)=\int_0^\infty h(t+s)f(s)\dd s, \quad t\in\bbR_+,
\label{z00}
\end{equation}
on a suitable set of functions $f\in L^2$. The function $h$ is called \emph{the kernel function}. 

The discrete and continuous realisations of Hankel operators are unitarily equivalent. Namely, if we define
\[
\varphi_n(t)=L_n(t)e^{-t/2},\quad t>0,\quad n\geq0,
\]
where $L_n$ are the Laguerre polynomials, then $\{\varphi_n\}_{n=0}^\infty$ is an orthonormal basis in $L^2$ and the matrix of any \emph{bounded} integral Hankel operator in this basis is a Hankel matrix. Moreover, any bounded operator in $L^2$ with a Hankel matrix in this basis is an integral Hankel operator, if $h$ is allowed to be a distribution. For the details, see e.g. \cite[Section~1.8]{Peller} (where a scaled version of $\varphi_n$ is used). 

However, this apparently simple statement hides subtle differences between the discrete and continuous cases. These differences become more pronounced when one considers \emph{unbounded} $\gamma_h$ and $\Gamma_h$. For example, in many cases of interest the kernel function $h(t)$ has a singularity as $t\to0$, while there is no natural analogue of this phenomenon for Hankel matrices. For this and other reasons, natural results for integral Hankel operators are often not directly deducible from the analogous results on Hankel matrices, even though the proof strategies are broadly similar.

The study of unbounded Hankel operators is a relatively young area. Dima Yafaev's contribution has been crucial here, both in the discrete and continuous cases.

\subsection{Positive semi-definite Hankel matrices: the quadratic form of $\gamma_\mu$}
The subject of this paper is the precise definition of unbounded integral Hankel operators $\Gamma_h$. To set the scene, we start by recalling known results for unbounded Hankel matrices $\gamma_h$. We will come back to $\Gamma_h$ in the next section. 

Let $\{h_j\}_{j\in\bbZ_+}$ be a sequence of complex numbers. It is a classical theorem of Hamburger that the corresponding Hankel matrix is positive semi-definite,  i.e. 
\[
\{h_{j+k}\}_{j,k=0}^N \quad\text{is positive semi-definite for all $N\geq0$,}
\]
if and only if $h_j$ are moments of a positive measure $\mu$ on $\bbR$:
\[
h_j=\int_\bbR x^j \dd\mu(x), \quad j\geq0.
\]
In this case we will write $\gamma_\mu$ instead of $\gamma_h$ for the Hankel matrix \eqref{z9}. The quadratic form corresponding to $\gamma_\mu$ can be expressed as
\begin{equation}
q_\mu[a]=\sum_{j,k=0}^\infty h_{j+k}a_j\overline{a_k}=\int_{-\infty}^\infty\Abs{\sum_{j=0}^\infty a_j x^j}^2\dd\mu(x)
\label{z7}
\end{equation}
on a suitable set of elements $a\in\ell^2$.

Widom \cite[Theorem 3.1]{Widom} proved that the form $q_\mu$ is bounded on $\ell^2$, i.e. 
\[
q_\mu[a]\leq C\norm{a}^2_{\ell^2},
\]
if and only if 
\begin{equation}
\supp\mu\subset [-1,1]\quad\text{ with }\quad\mu(\{-1\})=\mu(\{1\})=0
\label{z6}
\end{equation}
and the Carleson condition 
\begin{equation}
\mu((-1,-1+x))+\mu((1-x,1))\leq Cx, \quad \forall x\in(0,1)
\label{z6a}
\end{equation}
is satisfied. 
A classical example is $\dd\mu(x)=\dd x$, which corresponds to $h_j=1/(1+j)$; then the corresponding Hankel matrix is Hilbert's matrix with entries $1/(1+j+k)$. 

In what follows, we will only be interested in the measures satisfying \eqref{z6}; this is equivalent to $h_j\to0$ as $j\to\infty$. We will focus on the case when the Carleson condition fails, i.e. $\mu$ is sufficiently ``concentrated'' near $1$ or $-1$. In this case, the quadratic form $q_\mu$ is unbounded on $\ell^2$, and its definition requires some care. The first step here was made by Yafaev in \cite{Ya3}. Let $\calF\subset\ell^2$ be the set of finite linear combinations of vectors $e_k$ of the canonical basis; in other words, $\calF$ consists of sequences with finitely many non-zero entries.
\begin{theorem}\cite[Theorem~1.2]{Ya3}\label{thm:z0} 
Let $\mu$ satisfy \eqref{z6}. 
\begin{enumerate}[\rm(i)]
\item
The quadratic form $q_\mu$, defined on the domain
\[
\Dom q_\mu=\{a\in\ell^2: q_\mu[a]<\infty\}, 
\]
is closed. 
\item
The set $\calF$ is dense in $\Dom q_\mu$ with respect to the norm induced by $q_\mu$. 
\end{enumerate}
\end{theorem}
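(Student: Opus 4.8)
The plan is to exploit the integral representation \eqref{z7}, which identifies $q_\mu[a]$ with $\norm{f_a}_{L^2(\mu)}^2$, where $f_a(x)=\sum_{j\ge0}a_jx^j$ and $L^2(\mu):=L^2([-1,1],\dd\mu)$. The basic a priori estimate is that for $a\in\ell^2$ the series defining $f_a$ converges on $(-1,1)$ with $\abs{f_a(x)}\le\norm{a}_{\ell^2}(1-x^2)^{-1/2}$, so that $f_a$ is well defined $\mu$-a.e.\ (recall $\mu(\{\pm1\})=0$) and, crucially, $\ell^2$-convergence $a^{(n)}\to a$ forces $f_{a^{(n)}}\to f_a$ pointwise on $(-1,1)$, hence $\mu$-a.e. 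For part (i) I would show that the operator $V\colon a\mapsto f_a$ with domain $\Dom q_\mu$ is closed from $\ell^2$ to $L^2(\mu)$, which is equivalent to closedness of $q_\mu$. So let $a^{(n)}\in\Dom q_\mu$ with $a^{(n)}\to a$ in $\ell^2$ and $f_{a^{(n)}}\to h$ in $L^2(\mu)$. The a priori estimate gives $f_{a^{(n)}}\to f_a$ pointwise on $(-1,1)$, while a subsequence of $f_{a^{(n)}}$ converges to $h$ $\mu$-a.e.; since $\mu$ lives on $(-1,1)$ these limits agree, so $h=f_a\in L^2(\mu)$, $a\in\Dom q_\mu$, and $q_\mu[a^{(n)}-a]=\norm{f_{a^{(n)}}-f_a}_{L^2(\mu)}^2\to0$. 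This proves (i).

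For part (ii) I would equip $\Dom q_\mu$ with the form inner product $\langle a,b\rangle_{q}=\langle a,b\rangle_{\ell^2}+\langle f_a,f_b\rangle_{L^2(\mu)}$, a Hilbert space by (i), and prove that $\calF^\perp=\{0\}$ in it. If $c\perp\calF$, testing against $e_k$ gives $c_k=-\int_{-1}^1 x^k f_c\,\dd\mu$ for all $k$; writing $g:=f_c$ and summing the resulting power series (legitimate for each fixed $x\in(-1,1)$ by Fubini, since $(1-\abs{x}\abs{y})^{-1}$ is $\mu$-integrable) yields the integral equation $g(x)=-\int_{-1}^1\tfrac{g(y)}{1-xy}\dd\mu(y)$, that is $g=-Kg$ for the operator $K$ with the positive-definite kernel $(1-xy)^{-1}=\sum_{n\ge0}x^ny^n$. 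Formally $\langle Kg,g\rangle_{L^2(\mu)}=\sum_{n\ge0}\abs{\int x^n g\,\dd\mu}^2\ge0$, while the equation gives $\langle Kg,g\rangle_{L^2(\mu)}=-\norm{g}_{L^2(\mu)}^2$; hence $g=0$, so $c=0$ and $\calF$ is dense.

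The main obstacle is to make this positivity rigorous, because the kernel $(1-xy)^{-1}$ is singular at $x=y=\pm1$ and the double integral defining $\langle Kg,g\rangle$ need not converge absolutely — precisely in the regime where $q_\mu$ is unbounded. I would regularise with the kernels $(1-sxy)^{-1}$, $0<s<1$: expanding geometrically (now uniformly convergent) gives the clean identity $\langle K_s g,g\rangle_{L^2(\mu)}=\sum_{n\ge0}s^n\abs{\int x^n g\,\dd\mu}^2=-\langle g(s\,\bigcdot),g\rangle_{L^2(\mu)}$, whose left-hand side is nonnegative and increases to $\sum_{n}\abs{\int x^ng\,\dd\mu}^2=\norm{c}_{\ell^2}^2$ as $s\to1$. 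The heart of the proof is then to pass to the limit on the right, i.e.\ to show $\langle g(s\,\bigcdot),g\rangle\to\norm{g}_{L^2(\mu)}^2$, which would force $\norm{c}_{\ell^2}^2=-\norm{g}_{L^2(\mu)}^2$ and hence $c=0$. Since $g(s\,\bigcdot)\to g$ pointwise $\mu$-a.e., this follows once one establishes the uniform bound $\sup_{0<s<1}\norm{g(s\,\bigcdot)}_{L^2(\mu)}<\infty$, so that the dilations converge weakly to $g$. Proving this bound — equivalently, the convergence of the Abel means of $f_c$ in $L^2(\mu)$ — is where the real work lies, and I would attack it by splitting the integral near the endpoints $\pm1$ and exploiting $\mu(\{\pm1\})=0$ together with the a priori estimate on $f_c$.
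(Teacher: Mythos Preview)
The paper does not itself prove Theorem~\ref{thm:z0}; it is quoted from \cite{Ya3} as background (with the remark that part~(ii) ``requires considerable work''), and only the continuous analogue, Theorem~\ref{thm.z1}, is proved here, by methods quite different from yours. Your argument for part~(i) is correct and standard --- it is the same pointwise-limit closedness argument the paper uses for the Laplace transform operator in Section~\ref{sec.b}.

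For part~(ii), your route via the orthogonality relation $c_k=-\int x^k g\,\dd\mu$ and Abel regularisation is elegant, and the identity $\sum_n s^n\abs{c_n}^2=-\jap{g(s\,\bigcdot),g}_{L^2(\mu)}$ is correct. But the gap you flag at the end is genuine, and your proposed fix does not close it. The a~priori estimate gives only $\abs{g(sx)}\le\norm{c}(1-s^2x^2)^{-1/2}\le\norm{c}(1-x^2)^{-1/2}$; this $s$-uniform majorant lies in $L^2(\mu)$ iff $\int(1-x^2)^{-1}\dd\mu<\infty$, and that condition already forces $q_\mu$ to be bounded --- precisely the regime you want to exclude. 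Splitting off a neighbourhood of $\pm1$ does not help: on $\{\abs{x}>1-\delta\}$ the only pointwise control you have on $g(sx)$ is that same blowing-up majorant, and neither $\mu(\{\pm1\})=0$ nor $g\in L^2(\mu)$ says anything about $\int_{\abs{x}>1-\delta}\abs{g(sx)}^2\dd\mu$ uniformly in $s$. So neither the bound $\sup_s\norm{g(s\,\bigcdot)}_{L^2(\mu)}<\infty$ nor dominated convergence for $g(sx)\overline{g(x)}$ is available from the ingredients you list. Convergence of the Abel means in $L^2(\mu)$ is the whole difficulty here, not a tail-end technicality.

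By contrast, the paper's proof of the continuous analogue (Section~\ref{sec.b}) avoids this obstacle altogether: it uses commutation with shifts, splits $\mu$ into a finite piece plus a tail, and on the finite piece applies the Berg--Szwarc trick that $x\,\dd\mu$ is automatically Carleson, reducing the problem to a \emph{bounded} Hankel operator and a short ODE argument (Lemma~\ref{lma.b2}). The discrete counterpart would use that $(1-x^2)\dd\mu$ is Carleson whenever \eqref{z6} holds --- this is exactly the observation behind \eqref{z12} --- together with a difference-equation analogue of Lemma~\ref{lma.b2}. That structural input, turning the unbounded form into a bounded one by a derivative/difference, is what your Abel-means approach is missing.
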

Part (i) is simple and standard, while part (ii) requires considerable work. There is a partial converse to this theorem, see \cite[Theorem 1]{Ya4}.

By general principles, there exists a unique self-adjoint  operator $\gamma_\mu$ corresponding to $q_\mu$, and this operator is positive semi-definite. Following Yafaev, we will take $\gamma_\mu$ for the precise definition of the Hankel operator corresponding to the Hankel matrix $\{h_{j+k}\}_{j,k\in\bbZ_+}$.

\medskip

This definition of $\gamma_\mu$ leaves open the following questions:
\begin{itemize}
\item
What is the domain of $\gamma_\mu$?
\item
Is $\gamma_\mu$ essentially self-adjoint on some natural set (e.g. on $\calF$)?
\end{itemize}
We discuss these questions in the rest of this section. 

\subsection{Operator domains: the case $h\in\ell^2$}
Let $\mu$ be a measure of the class \eqref{z6}, let $\{h_j\}_{j\in\bbZ_+}$ be the sequence of its moments and let $\gamma_\mu$ be the Hankel operator defined via the quadratic forms as described above.  Let us first discuss the easier case when $h$ is square-summable. Then for every element $e_n$ of the standard basis of $\ell^2$, the action of $\gamma_\mu$ on $e_n$ is well-defined in an obvious way, 
\begin{equation}
(\gamma_\mu e_n)_k=h_{n+k}, \quad k\geq0, 
\label{z10}
\end{equation}
and the right hand side here is an element of $\ell^2$. From here it easily follows that $\calF\subset\Dom\gamma_h$ and $\gamma_\mu$ can be defined directly by \eqref{z9} on the set $\calF$. On the other hand, for every $a\in \ell^2$, we can define the sequence 
\begin{equation}
(\gamma_\mu a)_k=\sum_{j=1}^\infty h_{k+j}a_j,\quad k\geq0
\label{z0c}
\end{equation}
(by Cauchy-Schwarz the series converges absoltely), but this sequence is not necessarily in $\ell^2$.  Moreover, even if this sequence is in $\ell^2$, it is not obvious whether in this case $a\in\Dom\gamma_\mu$ or whether the action of the operator $\gamma_\mu$ (defined in terms of the quadratic form $q_\mu$ as above) is given by \eqref{z0c}. This is clarified in the following theorem. 
\begin{theorem}\cite{GP1,BS}\label{thm:b4}
Let $\mu$ satisfy \eqref{z6} and let $h_\mu$ be square-summable. 
\begin{enumerate}[\rm(i)]
\item
The operator domain of $\gamma_\mu$ is
\[
\Dom \gamma_\mu=\{a\in\ell^2: \gamma_\mu a\in\ell^2\},
\]
where $\gamma_\mu a$ is understood as the sequence defined by \eqref{z0c}. Moreover, the operator $\gamma_\mu$ acts on $\Dom\gamma_\mu$ according to the formula \eqref{z0c}. 
\item
We have  $\calF\subset\Dom\gamma_\mu$ and $\calF$ is dense in $\Dom \gamma_\mu$ in the graph norm of $\gamma_\mu$
\[
\norm{a}_{\gamma_\mu}:=\left(\norm{a}_{L^2}^2+\norm{\gamma_\mu a}_{L^2}\right)^{1/2}.
\]
In other words, the restriction $\gamma_\mu|_{\calF}$ is essentially self-adjoint. 
\end{enumerate} 
\end{theorem}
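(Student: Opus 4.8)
The plan is to realise $\gamma_\mu$ as $V^*V$. Let $V\colon\ell^2\to L^2(\mu)$ be the operator with $\Dom V=\Dom q_\mu$ and $Ve_j=x^j$, so that $Va=P_a$ with $P_a(x)=\sum_j a_jx^j$ and $q_\mu[a]=\norm{Va}_{L^2(\mu)}^2$ by \eqref{z7}. Closedness of $q_\mu$ (Theorem~\ref{thm:z0}(i)) is precisely closedness of $V$, so $V$ is closed and densely defined and $\gamma_\mu=V^*V$ is the associated positive self-adjoint operator. The first step is the moment-interchange identity: for every $c\in\Dom q_\mu$ and every $k$,
\[
(\gamma_\mu c)_k=\int_\bbR x^k P_c\,\dd\mu=\sum_{j=0}^\infty h_{k+j}c_j .
\]
I would prove it by approximating $c$ in the form norm by finite sequences $c^{(n)}\in\calF$, which is possible by Theorem~\ref{thm:z0}(ii): for finite $c^{(n)}$ the identity is immediate, its left-hand side converges because $P_{c^{(n)}}\to P_c$ in $L^2(\mu)$ while $x^k\in L^2(\mu)$, and its right-hand side converges because $(h_{k+j})_{j}\in\ell^2$ — this is where the hypothesis $h\in\ell^2$ enters — and $c^{(n)}\to c$ in $\ell^2$. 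In particular $\calF\subset\Dom\gamma_\mu$ with $\gamma_\mu e_n$ given by \eqref{z10}, the easy inclusion in part (ii).

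Next I would compute the adjoint of the symmetric operator $S:=\gamma_\mu|_{\calF}$ algebraically: since the $h_j$ are real and $h\in\ell^2$, one gets $\langle Sb,a\rangle=\langle b,\gamma_\mu a\rangle$ for all $b\in\calF$ and $a\in\ell^2$, where $\gamma_\mu a$ denotes the sequence \eqref{z0c}. Hence $a\in\Dom S^*$ if and only if \eqref{z0c} belongs to $\ell^2$, and then $S^*a$ is given by \eqref{z0c}; that is, $\Dom S^*$ is exactly the set on the right-hand side of part (i). Together with the previous paragraph this yields the chain $S\subset\gamma_\mu\subset S^*$, and the whole theorem reduces to the single identity $\gamma_\mu=S^*$, which simultaneously identifies $\Dom\gamma_\mu$ as in (i) and says that $S$ is essentially self-adjoint, which is (ii). As $\gamma_\mu$ is self-adjoint and already $\gamma_\mu\subset S^*$, only the reverse inclusion $S^*\subset\gamma_\mu$ remains, and I would reduce this to the single assertion $\Dom S^*\subseteq\Dom q_\mu$. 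Indeed, granting it, the identity $q_\mu[a,c]=\langle\gamma_\mu a,c\rangle$ — valid for $c\in\calF$ by the moment-interchange identity, with $\gamma_\mu a$ the sequence \eqref{z0c} — extends to all $c\in\Dom q_\mu$ by the form-density of $\calF$, both sides being continuous in the form norm; this exhibits $a$ as an element of $\Dom\gamma_\mu$ with $\gamma_\mu a$ given by \eqref{z0c}, whence $S^*\subset\gamma_\mu$ and $\langle S^*a,a\rangle=q_\mu[a]\ge0$.

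The inclusion $\Dom S^*\subseteq\Dom q_\mu$ — equivalently, that membership of \eqref{z0c} in $\ell^2$ already forces $P_a\in L^2(\mu)$ — is the heart of the matter and the step I expect to be the main obstacle. The difficulty is that a form core need not be an operator core, so Theorem~\ref{thm:z0}(ii) cannot be invoked directly: for $a\in\Dom S^*$ the truncations $a^{(N)}\in\calF$ converge to $a$ in $\ell^2$, but there is no a priori reason for $\gamma_\mu a^{(N)}$ to converge in $\ell^2$, nor for the partial sums $\sum_{j\le N}a_jx^j$ to converge to $P_a$ in $L^2(\mu)$; every crude bound on $\norm{\gamma_\mu a^{(N)}}$ I can write reduces to a weighted condition such as $\sum_j j\abs{h_j}^2<\infty$ or $\int(1-x^2)^{-1}\dd\mu<\infty$, i.e. exactly to the Carleson-type hypothesis \eqref{z6a} which is assumed to fail in the cases of interest. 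To close the gap I would bring in the two features not yet used: the positivity of $\gamma_\mu$ and, decisively, the compactness of $\supp\mu$. Writing $B$ for multiplication by $x$ on $L^2(\mu)$, compact support gives $\norm{B}\le1$ and $Ve_j=B^j\1$ with $\1=Ve_0$, so the Hamburger moment problem for $\mu$ is determinate. I would regularise $a$ by the Abel-type dilations $a\mapsto(r^ja_j)_j$, $r\uparrow1$, whose images lie in $\Dom q_\mu$ (there $P_a$ is replaced by the bounded function $x\mapsto P_a(rx)$), bound $q_\mu[(r^ja_j)_j]$ uniformly in $r$ using the boundedness of $B$ together with the $\ell^2$-bound on \eqref{z0c}, and finally recover $q_\mu[a]<\infty$ from Fatou's lemma. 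The entire weight of the proof falls on this uniform bound — equivalently, on showing that $\langle S^*a,a\rangle$ is real — and it is exactly here that the compactness of $\supp\mu$ (and Theorem~\ref{thm:z0}) must be brought to bear.
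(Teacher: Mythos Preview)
Your reduction is set up correctly: the factorisation $\gamma_\mu=V^*V$, the moment-interchange identity, the chain $S\subset\gamma_\mu\subset S^*$, and the reduction of the whole theorem to the single inclusion $\Dom S^*\subseteq\Dom q_\mu$ are all fine. The gap is precisely where you say it is, and it is a real one: you do not supply the uniform bound on $q_\mu[a_r]=\int\abs{P_a(rx)}^2\dd\mu(x)$, and the tools you name do not obviously produce it. Writing $q_\mu[a_r]=\sum_n r^n h_n c_n$ with $c_n=(a*\bar a)_n$, the desired bound amounts to an Abel--Tauber link with $\jap{b,a}=\sum_k\overline{a_k}\sum_j h_{j+k}a_j$; but passing from one to the other requires interchanging the $j$ and $k$ sums, which is exactly the absolute convergence you do not have. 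The ``equivalent'' reformulation ``$\jap{S^*a,a}$ is real'' is true \emph{a posteriori} (once $a\in\Dom\gamma_\mu$ it equals $q_\mu[a]\ge0$), but I see no direct route to it from $b\in\ell^2$, and boundedness of $B$ alone only gives the crude $\norm{P_{a_r}}\le\norm{a}\,(1-r^2)^{-1/2}$, which blows up.

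The paper does not prove Theorem~\ref{thm:b4} (it is quoted from \cite{GP1,BS}), but it proves the continuous analogue Theorem~\ref{thm:b1} by a route that avoids your obstacle entirely, and that route transports verbatim to the discrete case. Rather than proving $\Dom S^*\subseteq\Dom q_\mu$, one checks the deficiency condition $\Ker(S^*+I)=\{0\}$ directly. The Berg--Szwarc identity $\gamma_\mu(e_n-e_{n+2})=\gamma_\nu e_n$ with $\dd\nu(x)=(1-x^2)\dd\mu(x)$ is the key: compactness of $\supp\mu$ makes $\nu$ Carleson, so $\gamma_\nu$ is \emph{bounded}. For $u\in\Ker(S^*+I)$ one then gets the difference equation $u_{n+2}-u_n=(\gamma_\nu u)_n$; pairing with $u$ gives $\jap{S_*^2u,u}-\norm{u}^2=\jap{\gamma_\nu u,u}\ge0$, while Cauchy--Schwarz gives $\abs{\jap{S_*^2u,u}}\le\norm{u}^2$, forcing $\gamma_\nu u=0$, hence $u_{n+2}=u_n$, hence $u=0$. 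This is the discrete counterpart of the paper's Lemma~\ref{lma.b2}. It uses exactly the two ingredients you identified --- positivity and compact support --- but packages them so that no limiting procedure in $r$ is needed; once (ii) is in hand, part (i) follows by the duality argument you already wrote down (cf.\ the paper's Section~\ref{sec.f}, proof of Theorem~\ref{thm:b1}(i)).
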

Let us put this differently. There are three possible ways to define $\gamma_\mu$:
\begin{itemize}
\item
the ``minimal operator'': define $\gamma_\mu$ by formula \eqref{z0c} on $\calF$ and take the closure;
\item
the ``maximal operator": define  $\gamma_\mu$ by formula \eqref{z0c} on those sequences $a\in\ell^2$ for which $\gamma_\mu a$ belongs to $\ell^2$;
\item
via the quadratic forms: define the form $q_\mu$ on $\calF$, take the closure and consider the corresponding self-adjoint operator.
\end{itemize}
Theorem~\ref{thm:b4} shows that all three definitions lead to the same self-adjoint operator $\gamma_\mu$.

\begin{remark*}
There is a clear analogy between Theorem~\ref{thm:b4} and the deep classical theorem of Kato \cite{Kato} on the Schr\"odinger operator $-\Delta+V$ on $L^2(\bbR^d)$, $d>1$, with the potential $0\leq V\in L^2_{\text{loc}}(\bbR^d)$. For such potential, the expression $-\Delta f+Vf$ belongs to $L^2(\bbR^d)$ for all $f\in C^\infty_{\comp}(\bbR^d)$. Kato's theorem asserts that the operator $-\Delta+V$ is essentially self-adjoint on $C^\infty_{\comp}(\bbR^d)$. 
\end{remark*}

Theorem~\ref{thm:b4} was proved independently and more or less simultaneously in \cite[Theorem~3.8]{BS} and \cite[Theorem~1.1]{GP1} by using very different approaches. Both approaches can be generalised in different directions and both are discussed below.

\subsection{Generalisation to the non-self-adjoint case}
Theorem~\ref{thm:b4} can be regarded as a particular case of \cite[Theorem~1.1]{GP1} which concerns non-self-adjoint Hankel matrices. Let $h\in\ell^2$ be a complex-valued sequence. One can define the operator $\gamma_h$ as in \eqref{z9} on the domain 
\[
\Dom\gamma_h=\{a\in\ell^2: \gamma_h a\in \ell^2\},
\]
where $\gamma_h a$ is understood as the sequence from \eqref{z9} (the series over $j$ converges absolutely by Cauchy-Schwarz). 
\begin{theorem}\cite[Theorem~1.1]{GP1}\label{thm:b4b}
Assume that $h\in\ell^2$.
\begin{enumerate}[\rm(i)]
\item
The above defined operator $\gamma_h$ is closed. 
\item
The set $\calF$ is dense in $\Dom\gamma_h$ in the graph norm of $\gamma_h$. 
\item
The adjoint of $\gamma_h$ is given by $\gamma_h^*=\gamma_{\overline{h}}$. 
\end{enumerate}
\end{theorem}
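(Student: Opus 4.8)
The plan is to reduce all three parts to a single symmetry identity and then to isolate the one genuinely analytic step. Write $\gamma_h$ for the maximal operator of the statement and $\gamma_h|_{\calF}$ for its restriction to $\calF$. First I would record the elementary adjoint computation: for $b\in\ell^2$ one has $\langle\gamma_h e_n,b\rangle=\sum_k h_{n+k}\overline{b_k}=\overline{(\gamma_{\overline h}b)_n}$, the series converging absolutely by Cauchy--Schwarz since $h\in\ell^2$, so the functional $a\mapsto\langle\gamma_h a,b\rangle$ is $\ell^2$-bounded on $\calF$ exactly when $\gamma_{\overline h}b\in\ell^2$. Hence $(\gamma_h|_{\calF})^*=\gamma_{\overline h}$. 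As an adjoint, the right-hand side is closed; interchanging $h$ and $\overline h$ shows that $\gamma_h$ is closed, which is part (i).

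Next I would observe that the remaining two parts both follow from the symmetry identity
\[
\langle\gamma_h a,b\rangle=\langle a,\gamma_{\overline h}b\rangle,\qquad a\in\Dom\gamma_h,\ b\in\Dom\gamma_{\overline h}.
\]
Indeed, the inclusion $\gamma_h|_{\calF}\subset\gamma_h$ gives for free that $(\gamma_h)^*\subset(\gamma_h|_{\calF})^*=\gamma_{\overline h}$, while the identity supplies the reverse inclusion $\gamma_{\overline h}\subset(\gamma_h)^*$; together these give part (iii), $\gamma_h^*=\gamma_{\overline h}$. Applying part (iii) to $\overline h$ gives $(\gamma_{\overline h})^*=\gamma_h$; since $(\gamma_h|_{\calF})^*=\gamma_{\overline h}$, taking one further adjoint yields $(\gamma_h|_{\calF})^{**}=\gamma_h$, i.e. $\calF$ is a core, which is part (ii). Thus everything rests on the displayed identity, which is simply the assertion that the double sum $\sum_{j,k}h_{k+j}a_j\overline{b_k}$ may be summed in either order.

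To prove the identity I would regularise by the diagonal contractions $(M_r a)_j=r^j a_j$, $0<r<1$. Because $M_r a$ decays geometrically while $h\in\ell^2$, the defining series for $\gamma_h M_r a=\sum_j r^j a_j (S^*)^j h$ converges absolutely in $\ell^2$; in particular $M_r a\in\Dom\gamma_h$, and by absolute convergence of the corresponding double sum the identity holds with $a$ replaced by $M_r a$. Since $M_r a\to a$ in $\ell^2$, the right-hand side $\langle M_r a,\gamma_{\overline h}b\rangle$ converges to $\langle a,\gamma_{\overline h}b\rangle$, so it suffices to show $\langle\gamma_h M_r a,b\rangle\to\langle\gamma_h a,b\rangle$ as $r\to1^-$. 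A direct computation gives $(\gamma_h M_r a)_k\to(\gamma_h a)_k$ for every fixed $k$, and hence the matter comes down to the uniform bound $\sup_{0<r<1}\norm{\gamma_h M_r a}_{\ell^2}<\infty$ (equivalently, uniform boundedness of the truncations $\gamma_h P_N a$, where $P_N$ projects onto the first $N+1$ coordinates): uniform boundedness together with coordinatewise convergence yields weak convergence $\gamma_h M_r a\rightharpoonup\gamma_h a$, and testing against the fixed vector $b$ closes the argument.

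This uniform bound is the main obstacle. It cannot be obtained by naive telescoping: the increments satisfy $\norm{\gamma_h P_N a-\gamma_h P_{N-1}a}_{\ell^2}=\abs{a_N}\,\norm{(S^*)^N h}_{\ell^2}$, and $\sum_N\abs{a_N}\norm{(S^*)^N h}_{\ell^2}$ is controlled only under the Hilbert--Schmidt condition $\sum_m(m+1)\abs{h_m}^2<\infty$, which is not assumed. One must instead exploit the cancellation built into the Hankel structure, and the natural tool is the shift commutation relation $\gamma_h S=S^*\gamma_h$ (valid because $a\in\Dom\gamma_h$ forces $Sa\in\Dom\gamma_h$): it lets one trade truncations of the input for compressions of the output and thereby control the Abel (equivalently Dirichlet) means of $\gamma_h a$. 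In Hardy-space language the estimate is the statement that $P_+(\omega\,g_N)$ stays bounded in $L^2$ when the $g_N$ are the partial sums of the anti-analytic factor and $\omega=\sum_{j\geq0}h_j z^j\in H^2$; the difficulty mirrors the failure of Dirichlet projections to be bounded at the $L^1$ endpoint, so the argument must use the $H^2$ regularity of the symbol in an essential way.
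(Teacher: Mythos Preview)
The paper does not prove this theorem: it is quoted verbatim from \cite{GP1}. The nearest thing to a proof in the paper is the argument for the continuous analogue, Theorem~\ref{thm:b1a}, in Section~\ref{sec.c}, and even there the core density statement (Theorem~\ref{thm:c1}) is cited from \cite{GP2} rather than proved; the paper only supplies the reduction via the shift semigroup. So there is no ``paper's own proof'' to compare against directly.

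Your reductions are correct and well organised. The identification $(\gamma_h|_{\calF})^*=\gamma_{\overline h}$ is right, the deduction of closedness from it is clean (and matches the spirit of the direct argument in Section~\ref{sec.c} for the continuous case), and your observation that parts (ii) and (iii) are both equivalent to the single symmetry identity
\[
\langle\gamma_h a,b\rangle=\langle a,\gamma_{\overline h}b\rangle,\qquad a\in\Dom\gamma_h,\ b\in\Dom\gamma_{\overline h},
\]
is a nice way to isolate the analytic content. The regularisation by $M_r$, the reduction to the uniform bound $\sup_{0<r<1}\norm{\gamma_h M_r a}<\infty$, and the passage from uniform boundedness plus coordinatewise convergence to weak convergence are all fine.

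But the proof is incomplete: you do not establish the uniform bound. Your last paragraph is diagnosis, not proof. You correctly explain why telescoping fails (it would need the Hilbert--Schmidt condition), you correctly identify the shift commutation $\gamma_h S=S^*\gamma_h$ and the Hardy-space picture as the relevant tools, and you correctly flag that the difficulty is an endpoint phenomenon for Dirichlet/Abel means --- but ``it lets one trade truncations of the input for compressions of the output and thereby control the Abel means'' is a description of a strategy, not an argument, and the closing sentence simply restates what must be shown. This is precisely the step that carries all the weight of the theorem, and it is missing. As written, the proposal is a sharp reduction to an unproved lemma.
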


\subsection{Operator domains: the case $h\notin\ell^2$}
We return to self-adjoint $\gamma_\mu$. 
Let $\mu$ be a measure of the class \eqref{z6}, but now suppose that the sequence of moments $h\notin\ell^2$. This case is more difficult, because one cannot define $\gamma_\mu$ directly even on the elements of the standard basis: the r.h.s. of \eqref{z10} is not in $\ell^2$! This case was considered by Berg and Szwarc in \cite[Section~3]{BS}. 
They make the following simple but remarkable observation:
\begin{equation}
h_k-h_{k+2}=\int_{-1}^1 t^k (1-t^2)\dd\mu(t)=\int_{-1}^1 t^k \dd\nu(t),\quad k\geq0,
\label{z12}
\end{equation}
is the sequence of moments of the measure $\dd\nu(t)=(1-t^2)\dd\mu(t)$. Furthermore, this measure satisfies the Carleson condition:
\begin{align*}
\nu((-1,-1+\eps))&+\nu((1-\eps,1))
=
\int_{-1}^{-1+\eps}(1-x^2)\dd\mu(x)
+
\int_{1-\eps}^{1}(1-x^2)\dd\mu(x)
\\
&\leq
2\eps\int_{-1}^{-1+\eps}\dd\mu(x)
+
2\eps\int_{1-\eps}^{1}\dd\mu(x)
\leq 2\mu((-1,1))\eps,
\end{align*}
and therefore the corresponding Hankel operator $\gamma_\nu$ is bounded. (In fact, $\gamma_\nu$ is trace class, but we don't need this.) In particular, the action of $\gamma_\nu$ on the elements of the standard basis is well-defined. Relation \eqref{z12} can be interpreted as the identity
\[
\gamma_\mu(e_n-e_{n+2})=\gamma_\nu e_n, \quad n\geq0.
\]
Thus, the action of $\gamma_\mu$ on vectors 
\[
v_n:=e_n-e_{n+2}, \quad n\geq0,
\]
is well-defined. It is elementary to see that the set $\calF_0$ of all finite linear combinations of $v_n$ is dense in $\ell^2$. This set gives the correct substitute for $\calF$ in the theorem of Berg and Szwarc below. In order to state it, we denote by $S_*$ the standard backward shift operator on $\ell^2$: 
\[
S_*e_0=0, \quad S_*e_n=e_{n-1} \text{ for }n\geq1.
\] 
One observes that $(I-S_*^2)$ maps $\ell^2$ injectively onto the dense subspace $(I-S_*^2)(\ell^2)$ of $\ell^2$; of course, the inverse $(I-S_*^2)^{-1}$ is well-defined on this subspace.

\begin{theorem}\cite[Theorem~3.2]{BS}\label{thm:b4a}
Let $\mu$ be a measure of the class \eqref{z6}, and let $\dd\nu(x)=(1-x^2)\dd\mu(x)$. 
\begin{enumerate}[\rm(i)]
\item
The domain of $\gamma_\mu$ can be described as 
\[
\Dom \gamma_\mu=\{a\in\ell^2: \gamma_\nu a\in (I-S_*^2)(\ell^2)\}
\]
and for $a\in\Dom \gamma_\mu$ we have 
\[
\gamma_\mu a=(I-S_*^2)^{-1}\gamma_\nu a.
\]
\item
Let  $\calF_0$ be the set of finite linear combinations of vectors $v_n=e_n-e_{n+2}$ for $n\geq0$. Then  $\calF_0\subset\Dom \gamma_\mu$ and $\calF_0$ is dense in $\Dom \gamma_\mu$ in the graph norm of $\gamma_\mu$.  In other words, the restriction $\gamma_\mu|_{\calF_0}$ is essentially self-adjoint.
\end{enumerate}
\end{theorem}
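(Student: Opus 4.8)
The plan is to identify the self-adjoint operator $\gamma_\mu$ furnished by the form $q_\mu$ with the ``maximal'' operator $T$ given by the right-hand side of part (i), namely $\Dom T=\{a\in\ell^2:\gamma_\nu a\in(I-S_*^2)(\ell^2)\}$ and $Ta=(I-S_*^2)^{-1}\gamma_\nu a$ (the inverse is unambiguous since $I-S_*^2$ is injective on $\ell^2$). Writing $S:=S_*^*$ for the forward shift, so that $Se_n=e_{n+1}$, $v_n=(I-S^2)e_n$, and $\phi_{Sa}(x)=x\phi_a(x)$ where $\phi_a(x):=\sum_j a_jx^j$, I first record that $\gamma_\mu|_{\calF_0}$ is densely defined and that a one-line computation of its adjoint on the vectors $v_n$ gives $(\gamma_\mu|_{\calF_0})^*=T$. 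Both assertions then reduce to the single identity $\gamma_\mu=T$: once this is known, (i) is immediate, while (ii) follows because $(\gamma_\mu|_{\calF_0})^*=T=\gamma_\mu$ is self-adjoint, which is exactly essential self-adjointness of $\gamma_\mu|_{\calF_0}$.

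The easy half is $\gamma_\mu\subset T$. Using $x^n-x^{n+2}=x^n(1-x^2)$ and $\dd\nu=(1-x^2)\dd\mu$, I would check the form identity $q_\mu(w,v_n)=q_\nu(w,e_n)=\langle w,\gamma_\nu e_n\rangle$ for all $w$ in the form core $\calF$ (Theorem~\ref{thm:z0}); since $\gamma_\nu e_n\in\ell^2$, this shows $v_n\in\Dom\gamma_\mu$ with $\gamma_\mu v_n=\gamma_\nu e_n$, so that $\gamma_\mu|_{\calF_0}$ is a symmetric restriction of $\gamma_\mu$. Dualising the inclusion $\gamma_\mu|_{\calF_0}\subset\gamma_\mu=\gamma_\mu^*$ then gives $\gamma_\mu\subset(\gamma_\mu|_{\calF_0})^*=T$, together with the stated formula $\gamma_\mu a=(I-S_*^2)^{-1}\gamma_\nu a$.

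The crux is the reverse inclusion $T\subset\gamma_\mu$, i.e. showing that the abstract condition $\gamma_\nu a\in(I-S_*^2)(\ell^2)$ forces $a\in\Dom q_\mu$. The difficulty is entirely at the endpoints $x=\pm1$: the hypothesis controls $\phi_a$ only in $L^2(\nu)$, where the weight $1-x^2$ degenerates, whereas membership in $\Dom q_\mu$ requires $\phi_a\in L^2(\mu)$. I plan to resolve this through the exact identity
\[
q_\mu[a]=\sum_{k=0}^\infty q_\nu[S^k a],\qquad a\in\ell^2,
\]
valid for every $a$ (both sides possibly $+\infty$) by monotone convergence, since $\sum_{k\ge0}x^{2k}(1-x^2)=1$ for $|x|<1$ and $\mu(\{\pm1\})=0$. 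As $\gamma_\nu$ is a bounded Hankel operator it intertwines the shifts, $\gamma_\nu S=S_*\gamma_\nu$, whence $q_\nu[S^ka]=\langle\gamma_\nu S^ka,S^ka\rangle=\langle\gamma_\nu a,S^{2k}a\rangle$. If now $a\in\Dom T$ and $b:=Ta$, so that $\gamma_\nu a=(I-S_*^2)b$, the summands telescope:
\[
\langle\gamma_\nu a,S^{2k}a\rangle=\langle b,(I-S^2)S^{2k}a\rangle=\langle b,S^{2k}a\rangle-\langle b,S^{2k+2}a\rangle,
\]
so the partial sums equal $\langle b,a\rangle-\langle b,S^{2M}a\rangle\to\langle b,a\rangle$, the last term tending to $0$ because $b\in\ell^2$ and $S^{2M}a\rightharpoonup0$. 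Hence $q_\mu[a]=\langle b,a\rangle<\infty$, so $a\in\Dom q_\mu$. A short additional argument—verifying $q_\mu(a,v_n)=\langle b,v_n\rangle$ for all $n$ and noting that $\calF_0$ is a form core, since $e_n=\lim_N\sum_{k=0}^N v_{n+2k}$ in the form norm—then identifies $\gamma_\mu a=b=Ta$, giving $T\subset\gamma_\mu$ and completing the proof.

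The main obstacle is precisely this endpoint control, and the whole argument hinges on spotting the telescoping identity above: it converts the delicate question of $L^2(\mu)$-integrability near $\pm1$ into an algebraic cancellation powered by the Hankel intertwining relation $\gamma_\nu S=S_*\gamma_\nu$ and the boundedness of $\gamma_\nu$.
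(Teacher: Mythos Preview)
First, note that the paper does not itself prove Theorem~\ref{thm:b4a}; it is quoted from Berg--Szwarc. The paper's own work is on the continuous analogue, Theorem~\ref{thm:b2}, and proceeds in the opposite order: essential self-adjointness (part~(ii)) is established first via a deficiency-index computation (Lemma~\ref{lma.b2}) combined with approximation by shifts and splitting of the measure, and the domain description (part~(i)) is then deduced by duality. Your strategy of proving $T=\gamma_\mu$ directly through the telescoping identity $q_\mu[a]=\sum_{k\ge0}q_\nu[S^ka]$ together with the Hankel intertwining $\gamma_\nu S=S_*\gamma_\nu$ is a genuinely different and attractive route; it correctly yields $a\in\Dom q_\mu$ with $q_\mu[a]=\langle b,a\rangle$ for every $a\in\Dom T$.

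There is, however, a real gap in your final step. The claim that $\calF_0$ is a form core, justified by ``$e_n=\lim_N\sum_{k=0}^N v_{n+2k}$ in the form norm'', is false as stated: the partial sum equals $e_n-e_{n+2N+2}$, and $\|e_{n+2N+2}\|_{\ell^2}=1$ for every $N$, so the difference does not tend to zero in the form norm $(\|\cdot\|_{\ell^2}^2+q_\mu[\cdot])^{1/2}$. The repair is immediate once you use the correct form core $\calF$ furnished by Theorem~\ref{thm:z0}: run the same telescoping on the \emph{sesquilinear} form to obtain
\[
q_\mu(a,e_m)=\sum_{k\ge0}\langle\gamma_\nu a,e_{m+2k}\rangle
=\sum_{k\ge0}(b_{m+2k}-b_{m+2k+2})=b_m,
\]
whence $q_\mu(a,w)=\langle b,w\rangle$ for all $w\in\calF$ and therefore for all $w\in\Dom q_\mu$; this gives $a\in\Dom\gamma_\mu$ with $\gamma_\mu a=b=Ta$, completing $T\subset\gamma_\mu$. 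With this correction your argument goes through and yields a pleasantly direct alternative both to Berg--Szwarc and to the paper's (continuous) deficiency-index approach.
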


\section{Main results}

Our main results are the analogues of Theorems~\ref{thm:b4}, \ref{thm:b4b} and \ref{thm:b4a} for the continuous realisation of Hankel operators.

\subsection{Integral Hankel operators}
We will be interested in the continuous realisation \eqref{z00} of Hankel operators. 
An operator $\Gamma_h$ is positive semi-definite if and only if $h$ can be represented as the Laplace transform of a positive measure $\mu$ on $\bbR$:
\begin{equation}
h(t)=\int_\bbR e^{-tx}\dd\mu(x), \quad t>0,
\label{z2}
\end{equation}
where $\mu$ is such that integral converges for all $t>0$. 
This fact is a continuous analogue of Hamburger's moment problem; a precise statement and proof in a very general context can be found in Yafaev's work \cite[Theorems 5.1 and 5.3]{Ya2}. 

We will be mainly interested in kernels of the form \eqref{z2} (a minor exception is Theorem~\ref{thm:b1a}) and we will write $h=h_\mu$ in this case. We will write $\Gamma_\mu$ instead of $\Gamma_{h_\mu}$ for the corresponding integral Hankel operator (to be defined below).

Furthemore, we will assume that $h_\mu(t)\to0$ as $t\to\infty$. This corresponds to $\mu$ satisfying the condition 
\begin{equation}
\supp\mu\subset [0,\infty)\quad\text{ and }\quad \mu(\{0\})=0
\label{z14}
\end{equation}
(compare with \eqref{z6}). 
The Hankel operator $\Gamma_\mu$ is bounded if and only if in addition to \eqref{z14}, the measure $\mu$ satisfies the Carleson condition
\begin{equation}
\mu((0,x))\leq Cx, \quad \forall x>0
\label{z3}
\end{equation}
(compare with \eqref{z6a}). 
This condition is equivalent to the estimate
\[
\abs{h_\mu(t)}\leq C/t, \quad t>0.
\]
These facts are mentioned in \cite[page 22]{Widom} without proof; for completeness we give a proof in Appendix. 

As a warm-up, we consider three examples:
\begin{itemize}
\item
Let $\mu$ be the Lebesgue measure restricted to $\bbR_+$. Then $h_\mu(t)=1/t$ and $\Gamma_\mu$ is known as the \emph{Carleman operator}. It is well-known  \cite[Section 10.2]{Peller} that the Carleman operator is bounded but not compact. 
\item
Let $\mu$ be a single point mass at $\alpha>0$, with the total mass $=1$. Then $h_\mu(t)=e^{-\alpha t}$ and $\Gamma_\mu$ is a rank one Hankel operator, which (up to a factor) coincides with the projection onto the function $e^{-\alpha t}$ in $L^2(\bbR_+)$. Of course, if $\mu$ is a finite linear combination of point masses, then $\Gamma_\mu$ is a finite rank operator. 
\item
The simplest example of \emph{unbounded} Hankel operators is furnished by the family 
\begin{equation}
\dd\mu(x)=C_\alpha x^{\alpha-1}\dd x, 
\quad 
C_\alpha=1/\Gamma(\alpha), 
\quad
h_\mu(t)=t^{-\alpha},
\label{b0a}
\end{equation}
where $\alpha>0$, $\alpha\not=1$, and $\Gamma(\alpha)$ is the Gamma-function. 
Yafaev in \cite{Ya1} termed these operators \emph{Quasi-Carleman}  (in fact, he gave this name to a more general family of Hankel operators). It turns out that even in this simple case, the precise definition of $\Gamma_\mu$ requires some effort. 
\end{itemize}

\begin{definition*}
We will write $\mu\in\calM$, if $\mu$ is a positive Borel measure on $\bbR$ satisfying \eqref{z14} such that the Laplace transform of $\mu$ is finite, i.e. 
\begin{equation}
\int_0^\infty e^{-tx}\dd\mu(x)<\infty\quad  \forall t>0.
\label{b3}
\end{equation}
\end{definition*}
For $\mu\in\calM$, the kernel function $h_\mu$ is infinitely differentiable in $t>0$ and $h_\mu(t)\to0$ as $t\to\infty$; however, the rate of decay may be arbitrarily slow. Also $h_\mu(t)$ may have a strong singularity as $t\to0_+$. 

\medskip
For $\mu\in\calM$, we address the following questions:
\begin{itemize}
\item
How to define $\Gamma_\mu$ precisely as a self-adjoint operator on $L^2(\bbR_+)$?
\item
What is the domain of $\Gamma_\mu$?
\item
Is $\Gamma_\mu$ essentially self-adjoint on some natural set of ``nice'' functions?
\end{itemize}
\medskip

\subsection{The quadratic form of $\Gamma_\mu$}
\label{sec.z2}
Below  $C^\infty_\comp(\bbR_+)$ is the set of infinitely smooth compactly supported functions on $\bbR_+$; in particular, the support of $f\in C^\infty_\comp(\bbR_+)$  is separated away from the origin. The set $C^\infty_\comp(\bbR_+)$ plays the role of $\calF\subset\ell^2$ in the continuous case. 

Let $\mu\in\calM$; for $f\in C^\infty_\comp(\bbR_+)$, the quadratic form of $\Gamma_\mu$ can be written as 
\begin{align}
Q_\mu[f]
&=\int_0^\infty \int_0^\infty h_\mu(t+s) f(s)\overline{f(t)}\dd s\,  \dd t
\notag
\\
&=\int_0^\infty \int_0^\infty \left\{\int_0^\infty e^{-tx}e^{-sx}\dd\mu(x)\right\} f(s)\overline{f(t)}\dd s\,  \dd t
\notag
\\
&=\int_0^\infty \abs{\calL f(x)}^2\dd\mu(x), 
\label{z4}
\end{align}
where 
\begin{equation}
\calL f(x)=\int_0^\infty e^{-tx}f(t)\dd t, \quad\quad x>0
\label{b0}
\end{equation}
is the Laplace transform of $f$. The continuous analogue of Theorem~\ref{thm:z0} is the following statement, also due to Yafaev. 

\begin{theorem}\cite[Theorem~3.10]{Ya1}\label{thm.z1}
Let $\mu\in\calM$.
\begin{enumerate}[\rm(i)] 
\item
The quadratic form $Q_\mu$ defined on the domain 
\[ 
\Dom Q_\mu=\left\{f\in L^2: \int_0^\infty \abs{\calL f(x)}^2\dd\mu(x)<\infty\right\}
\]
is closed. 
\item
The set $C^\infty_\comp(\bbR_+)$ is dense in $\Dom Q_\mu$ with respect to the norm induced by $Q_\mu$. 
\end{enumerate}
\end{theorem}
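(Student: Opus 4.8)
The plan is to treat the two parts with different tools: part (i) is soft and rests on completeness of $L^2(\mu)$ together with the continuity of the Laplace transform, whereas part (ii) is the substantial one. For part (i), note that the form norm is $\norm{f}_{Q_\mu}^2:=\norm{f}_{L^2}^2+Q_\mu[f]=\norm{f}_{L^2}^2+\norm{\calL f}_{L^2(\mu)}^2$, so a sequence $\{f_n\}$ that is Cauchy in this norm is Cauchy both in $L^2$ and, after applying $\calL$, in $L^2(\mu)$; by completeness $f_n\to f$ in $L^2$ and $\calL f_n\to g$ in $L^2(\mu)$ for some $g$. The key observation is that for each fixed $x>0$ the functional $f\mapsto\calL f(x)$ is bounded on $L^2$, since $t\mapsto\ee^{-xt}$ lies in $L^2(\bbR_+)$; hence $\calL f_n(x)\to\calL f(x)$ for every $x>0$. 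Passing to a subsequence along which $\calL f_n\to g$ holds $\mu$-a.e. and recalling $\mu(\{0\})=0$ (so only $x>0$ matters), we conclude $g=\calL f$ $\mu$-a.e. Thus $f\in\Dom Q_\mu$ and $Q_\mu[f_n-f]\to0$, which is closedness.

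For part (ii) my strategy is a two-stage regularisation, the easy stage being the final smoothing. I would first reduce to the class of bounded functions supported in a compact subinterval $[a,b]\subset\bbR_+$, because such $g$ can then be mollified painlessly: for small $\delta$ the convolutions $g_\delta=g*\rho_\delta$ lie in $C^\infty_\comp(\bbR_+)$, converge to $g$ in $L^2$, and satisfy $\calL g_\delta(x)=m_\delta(x)\,\calL g(x)$ with $m_\delta(x)\to1$ and $\abs{m_\delta(x)}\le\ee^{\delta x}$. Since $\supp g$ is bounded away from the origin, $\abs{\calL g(x)}\le C\ee^{-ax}$, so $\abs{\calL g_\delta(x)}^2\le C^2\ee^{-2(a-\delta)x}$ is dominated by the $L^2(\mu)$ function whose integral is $h_\mu(2(a-\delta))<\infty$, and dominated convergence gives $\calL g_\delta\to\calL g$ in $L^2(\mu)$. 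The exponential decay of $\calL g$ (from support away from $0$) tames $\mu$ at infinity, and compactness of the support tames it near $0$.

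The heart of the matter is the first reduction: approximating an arbitrary $f\in\Dom Q_\mu$ in the form norm by bounded, compactly supported functions, i.e.\ controlling the effect on $Q_\mu$ of truncating $f$ near $t=0$, near $t=\infty$, and in amplitude. Here the naive argument fails: for a truncation $f_n$ one has $\calL f_n\to\calL f$ $\mu$-a.e., but there is in general no $L^2(\mu)$-majorant for $\abs{\calL f_n}$ (the pointwise bound $\abs{\calL f_n}\le\calL\abs{f}$ need not lie in $L^2(\mu)$), so dominated convergence in $L^2(\mu)$ is unavailable. I expect this to be the main obstacle. The remedy I would pursue is to reduce everything to the single statement $Q_\mu[f_n]\to Q_\mu[f]$: granted this, the $L^2(\mu)$-norms $\norm{\calL f_n}_{L^2(\mu)}$ are uniformly bounded, which together with $\mu$-a.e.\ convergence forces $\calL f_n\rightharpoonup\calL f$ weakly, and then the Radon--Riesz property of $L^2(\mu)$ (weak convergence plus convergence of norms implies norm convergence) yields $Q_\mu[f-f_n]\to0$. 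The genuine difficulty is thus the convergence of the forms, and it concentrates at the two ends of $\supp\mu$: the growth of $\mu$ as $x\to\infty$, which penalises slow decay of $\calL f_n$ (caused by mass of $f_n$ near $t=0$), and the concentration of $\mu$ as $x\to0$, which penalises the $x^{-1/2}$ growth of $\calL f_n$ (caused by mass of $f_n$ near $t=\infty$). I would therefore treat these regimes separately, estimating $Q_\mu[f-f_n]$ after splitting $\supp\mu$ into $(0,1]$ and $(1,\infty)$ and exploiting on each piece the corresponding decay/boundedness of the relevant Laplace transforms.

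Finally, I would record an alternative, more structural route that clarifies where the difficulty really lives. Since part~(i) makes $(\Dom Q_\mu,\norm{\cdot}_{Q_\mu})$ a Hilbert space, density of $C^\infty_\comp(\bbR_+)$ is equivalent to the absence of a nonzero $f$ orthogonal to it in the form inner product. Unfolding the orthogonality $\jap{f,\phi}_{L^2}+\jap{\calL f,\calL\phi}_{L^2(\mu)}=0$ and interchanging integrals (legitimate because $\supp\phi$ is bounded away from $0$, which bounds $\int_0^\infty\ee^{-sx}\abs{\calL f(x)}\,\dd\mu(x)$ via Cauchy--Schwarz and $h_\mu(2a)<\infty$) gives $f=-G$ with $G(s)=\int_0^\infty\ee^{-sx}\calL f(x)\,\dd\mu(x)$, i.e.\ $(I+\Gamma_\mu)f=0$ in a weak sense. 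A further interchange producing $\jap{G,f}_{L^2}=Q_\mu[f]$ would then force $\norm{f}_{L^2}^2=-Q_\mu[f]$ and hence $f=0$. This is appealingly short, but the last interchange carries exactly the same $x\to0$ and $x\to\infty$ convergence difficulties, so it does not circumvent the main obstacle.
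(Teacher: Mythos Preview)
Your part~(i) is correct and essentially identical to the paper's argument.

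In part~(ii), your mollification step (from bounded compactly supported functions to $C^\infty_\comp(\bbR_+)$) is fine. But the first reduction---approximating a general $f\in\Dom Q_\mu$ by bounded compactly supported functions---is left as an acknowledged gap, and neither of your proposed routes closes it. The Radon--Riesz approach requires $Q_\mu[f_n]\to Q_\mu[f]$ for the truncations $f_n$, and you give no argument for this; when $\mu$ concentrates near $x=0$ and you truncate the tail of $f$ at $t=n$, the only uniform pointwise bound available, $|\calL f_n(x)|\le\norm{f}\,e^{-\tau x}/\sqrt{2x}$, need not lie in $L^2(\mu)$ near $x=0$, so neither dominated convergence nor uniform integrability is at hand. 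Your orthogonality route, as you yourself say, stalls at exactly the same place.

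The paper fills this gap with two ideas you do not have. First, to push support away from $t=0$ one \emph{shifts} rather than truncates: the commutation $\calL(S_\tau f)(x)=e^{-\tau x}\calL f(x)$ gives $|\calL(S_\tau f)|\le|\calL f|\in L^2(\mu)$, so $S_\tau f\to f$ in form norm by dominated convergence, with no further estimate needed. Second---and this is the heart of the matter---for $\supp f\subset[\tau,\infty)$ one splits $\mu=\mu_1+\mu_2$ as you suggest; the $\mu_2$-piece on $[1,\infty)$ is then easy (the shifted support forces exponential decay of $\calL f$), but the finite measure $\mu_1$ on $(0,1]$ still requires work, and here the paper makes your orthogonality idea succeed via a trick of Berg--Szwarc. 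One tests not against $g$ but against $g'\in C^\infty_{\comp,0}(\bbR_+)$; since $\calL g'(x)=x\,\calL g(x)$, the relation $\jap{u,g'}+\jap{\calL u,\calL g'}_{\mu_1}=0$ becomes $\jap{u,g'}+\jap{u,\Gamma_\nu g}=0$ with $\dd\nu(x)=x\,\dd\mu_1(x)$. Because $\mu_1$ is finite, $\nu$ is Carleson and $\Gamma_\nu$ is \emph{bounded}; the orthogonality now reads $u'=\Gamma_\nu u$ distributionally, and a short ODE argument (multiply by $\overline{u}$, integrate, use $\Gamma_\nu\ge0$ and $u\in L^2$) forces $u=0$. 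This is precisely the device that rescues your ``$(I+\Gamma_\mu)f=0$'' heuristic from the integrability problems you flagged.
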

To be precise,  part (ii) was proved in \cite{Ya1} under the additional assumption 
\begin{equation}
\int_0^\infty (1+x)^{-k}\dd\mu(x)<\infty
\quad 
\text{ for some $k>0$.}
\label{z5}
\end{equation}
In this paper we show that this assumption is not necessary. We give a full proof of Theorem~\ref{thm.z1} in Section~\ref{sec.b}. Our construction has some common elements with \cite{Ya1} but also uses a different idea, borrowed from \cite{BS}. 

Again following Yafaev  \cite{Ya1}, we accept
\begin{definition*}
For $\mu\in\calM$, let $\Gamma_\mu$ be the self-adjoint operator corresponding to the quadratic form $Q_\mu$.
\end{definition*}
Below we discuss the domain of $\Gamma_\mu$ and the essential self-adjointness of $\Gamma_\mu$ on a suitable dense set of functions.

\subsection{Operator domains: $h_\mu$ is square integrable at infinity}
Let us first consider the following particular case: assume that 
\begin{equation}
\int_0^\infty \abs{h_\mu(t+s)}^2\dd s<\infty, \quad \forall t>0.
\label{eq:f1}
\end{equation}
This corresponds to the condition $h\in\ell^2$ in the discrete case. 
Of course, if \eqref{eq:f1} is true for \emph{some} $t>0$, it is also true for \emph{all} $t>0$ (but not necessarily for $t=0$). This condition can be equivalently rewritten in terms of the measure $\mu$:
\[
\int_0^\infty \int_0^\infty \frac{e^{-t(x+y)}}{x+y}\dd\mu(x)\dd\mu(y)<\infty, \quad \forall t>0.
\]

For $f\in L^2$, consider the function 
\begin{equation}
F(t)=\int_0^\infty h_\mu(t+s)f(s)\dd s, \quad t>0;
\label{eq:f2}
\end{equation}
the integral converges absolutely by Cauchy-Schwarz. The function $F$ may or may not be in $L^2$. Moreover, even if $F\in L^2$, it is not obvious whether $f\in\Dom\Gamma_\mu$ or $F=\Gamma_\mu f$.  These points are clarified in the following theorem, which is the continuous analogue of Theorem~\ref{thm:b4}.
\begin{theorem}\label{thm:b1}
Let $\mu\in\calM$ be such that $h_\mu$ is square integrable at infinity \eqref{eq:f1}. 
\begin{enumerate}[\rm(i)]
\item
We have  
\[
\Dom\Gamma_\mu
=
\{f\in L^2: F\in L^2\},
\]
where $F$ is the integral \eqref{eq:f2}. Moreover, if $F\in L^2$, then $\Gamma_\mu f=F$. 
\item
We have $C^\infty_\comp(\bbR_+)\subset\Dom\Gamma_\mu$ and $C^\infty_\comp(\bbR_+)$ is dense in $\Dom\Gamma_\mu$ in the graph norm of $\Gamma_\mu$. In other words, the restriction $\Gamma_\mu|_{C^\infty_\comp(\bbR_+)}$ is essentially self-adjoint.
\end{enumerate}
\end{theorem}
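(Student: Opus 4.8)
The plan is to realise both claims through the interplay between the form operator $\Gamma_\mu$ and the ``maximal'' operator $\Gamma_\mu^{\max}$ defined by $\Gamma_\mu^{\max}f=F$ on $\Dom\Gamma_\mu^{\max}=\{f\in L^2:F\in L^2\}$, with $F$ as in \eqref{eq:f2}. First I would record the easy structural facts. Since $h_\mu(t+\,\bigcdot\,)\in L^2$ for every $t>0$ by \eqref{eq:f1}, the value $F(t)$ is a genuine inner product, so if $f_n\to f$ in $L^2$ and $F_{f_n}\to G$ in $L^2$, then $F_{f_n}(t)\to F_f(t)$ for each $t$; comparing with an a.e.\ convergent subsequence gives $G=F_f$, i.e.\ $\Gamma_\mu^{\max}$ is closed. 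Next, writing $\Gamma_\mu^\circ:=\Gamma_\mu|_{C^\infty_\comp(\bbR_+)}$, a direct Fubini computation (legitimate because elements of $C^\infty_\comp(\bbR_+)$ are supported away from $0$) shows, for $\varphi,\phi\in C^\infty_\comp(\bbR_+)$, that $\langle F_\varphi,\phi\rangle=\int_0^\infty\calL\varphi\,\overline{\calL\phi}\,\dd\mu=Q_\mu(\varphi,\phi)$; combined with $F_\varphi\in L^2$ (which follows from \eqref{eq:f1} as $h_\mu$ is decreasing) and the fact that $C^\infty_\comp(\bbR_+)$ is a form core by Theorem~\ref{thm.z1}(ii), this gives $C^\infty_\comp(\bbR_+)\subset\Dom\Gamma_\mu$ with $\Gamma_\mu\varphi=F_\varphi$. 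The same Fubini identity, now for arbitrary $f\in L^2$ and $\phi\in C^\infty_\comp(\bbR_+)$, yields $\langle F_f,\phi\rangle=\langle f,F_\phi\rangle$ (here $h_\mu$ is real), from which one reads off $(\Gamma_\mu^\circ)^*=\Gamma_\mu^{\max}$.

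These facts already organise the proof. We have the chain $\Gamma_\mu^\circ\subset\Gamma_\mu\subset\Gamma_\mu^{\max}=(\Gamma_\mu^\circ)^*$, with $\Gamma_\mu$ self-adjoint. Taking adjoints shows that the two assertions of the theorem are equivalent to a single statement: $\Gamma_\mu^\circ$ is essentially self-adjoint, i.e.\ $\overline{\Gamma_\mu^\circ}=\Gamma_\mu$, equivalently $\Gamma_\mu^{\max}=\Gamma_\mu$. Thus it suffices to prove $\Gamma_\mu^{\max}\subset\overline{\Gamma_\mu^\circ}$: every $f\in\Dom\Gamma_\mu^{\max}$ lies in the graph-norm closure of $C^\infty_\comp(\bbR_+)$. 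Because the graph of $\Gamma_\mu^\circ$ is a linear subspace of $L^2\oplus L^2$, its weak and strong closures coincide; hence it is enough to produce $\varphi_n\in C^\infty_\comp(\bbR_+)$ with $\varphi_n\to f$ in $L^2$, $\sup_n\norm{F_{\varphi_n}}<\infty$, and $\langle F_{\varphi_n},\phi\rangle\to\langle F_f,\phi\rangle$ for all $\phi$ in the dense set $C^\infty_\comp(\bbR_+)$. The last convergence is automatic from $\langle F_{\varphi_n},\phi\rangle=\langle\varphi_n,F_\phi\rangle\to\langle f,F_\phi\rangle=\langle F_f,\phi\rangle$, so the whole theorem reduces to constructing such $\varphi_n$ carrying a \emph{uniform} bound on $\norm{F_{\varphi_n}}$.

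For the construction I would regularise $f$ in three steps, the first of which is the continuous counterpart of the shift device of Berg--Szwarc and is what tames the singularity of $h_\mu$ at $t=0$. Let $\theta_a$ denote right translation, $(\theta_a f)(s)=f(s-a)$ for $s>a$ and $0$ otherwise. The Hankel structure gives the key identity $F_{\theta_a f}(t)=F_f(t+a)$, so $\theta_a f$ is supported in $[a,\infty)$ while $\norm{F_{\theta_a f}}\le\norm{F_f}$ and $F_{\theta_a f}\to F_f$ in $L^2$ as $a\to0$; also $\theta_a f\to f$. Mollifying, $g:=(\theta_a f)*\rho_\delta$ with $\rho_\delta$ a smooth approximate identity of small width, is smooth, still supported away from $0$, and satisfies $F_{g}(t)=\int\rho_\delta(r)F_{\theta_a f}(t+r)\,\dd r$, whence $\norm{F_{g}}\le\norm{F_{\theta_a f}}\le\norm{F_f}$ by Young's inequality and $F_g\to F_{\theta_a f}$ as $\delta\to0$. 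Using the closedness of $\Gamma_\mu^{\max}$, it then suffices to treat a function $g$ that is smooth, supported in some $[c,\infty)$ with $c>0$, and has $F_g\in L^2$ (for such $g$ both $g$ and $F_g$ are already approximated strongly by the translated–mollified family), reducing the matter to the final cutoff $\varphi_R:=g\chi_R$, where $\chi_R\in C^\infty_\comp(\bbR_+)$ equals $1$ on $[0,R]$.

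The main obstacle is precisely this last step: showing $\sup_R\norm{F_{\varphi_R}}<\infty$, equivalently controlling $F_{g(1-\chi_R)}$, which is supported far out in the $t$-variable. This is where the standing hypothesis \eqref{eq:f1} must be used quantitatively, through the decay of $\psi(t):=\int_t^\infty\abs{h_\mu}^2$; crucially one needs only a \emph{bound}, not convergence, which matters because under \eqref{eq:f1} the stronger quantity $\int_0^\infty\psi$ may diverge, so a naive dominated-convergence argument for the tail fails. I expect this uniform tail estimate, refining the crude Cauchy--Schwarz bound by exploiting that $g$ is supported away from the origin, to be the technically demanding point; everything else is the soft functional-analytic scaffolding above. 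An equivalent formulation of the remaining content, possibly more convenient analytically, is the implication $F_f\in L^2\Rightarrow\int_0^\infty\abs{\calL f}^2\dd\mu<\infty$, i.e.\ $\Dom\Gamma_\mu^{\max}\subset\Dom Q_\mu$; here one may instead invoke the positive bounded approximants $\Gamma_{\mu_{\eps,N}}$ with $\mu_{\eps,N}=\1_{(\eps,N)}\mu$, whose forms increase to $Q_\mu$ and which therefore converge to $\Gamma_\mu$ in the strong resolvent sense, as an alternative engine for the limiting argument.
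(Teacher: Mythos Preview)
Your structural scaffolding is correct and well organised: the identification $(\Gamma_\mu^\circ)^*=\Gamma_\mu^{\max}$, the chain $\Gamma_\mu^\circ\subset\Gamma_\mu\subset\Gamma_\mu^{\max}$, the equivalence of (i) and (ii) with essential self-adjointness of $\Gamma_\mu^\circ$, and the shift and mollification steps are all fine (the paper also uses the shift). But the proof is genuinely incomplete precisely where you say it is. You need $\sup_R\norm{F_{g\chi_R}}<\infty$ for smooth $g$ supported in $[c,\infty)$ with $F_g\in L^2$, and you offer no argument. The Cauchy--Schwarz bound produces $\int_R^\infty\psi$, which, as you correctly note, may diverge under \eqref{eq:f1} alone; the two alternatives you float (a refined tail estimate exploiting $\supp g\subset[c,\infty)$, or the implication $\Dom\Gamma_\mu^{\max}\subset\Dom Q_\mu$ via monotone forms) are left as hopes rather than arguments. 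This is the substantive analytic content of the theorem, not a detail, and nothing in your proposal supplies it.

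The paper sidesteps the tail cutoff entirely by attacking essential self-adjointness through deficiency indices rather than graph-norm approximation. The key new ingredient is an elementary ODE lemma: if $u\in L^2$ and $A\geq0$ is bounded self-adjoint with $u'=Au$ in the distributional sense, then $u=0$. For a \emph{finite} measure $\mu$ one integrates by parts (the Berg--Szwarc device) to get $\Gamma_\mu g'=\Gamma_\nu g$ with $\dd\nu=x\,\dd\mu$; since $\mu$ is finite, $\nu$ is Carleson and $\Gamma_\nu$ is bounded. A putative $u\in\Ker((\Gamma_\mu^\circ)^*+I)$ then satisfies $\jap{u,g'}+\jap{u,\Gamma_\nu g}=0$ for all $g\in C^\infty_\comp(\bbR_+)$, and the ODE lemma forces $u=0$. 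For general $\mu\in\calM$ the paper splits $\mu=\mu_1+\mu_2$ with $\supp\mu_1\subset[0,1]$ (hence finite) and $\supp\mu_2\subset[1,\infty)$ (hence $h_{\mu_2}$ exponentially decaying); your shift step, the finite-measure case applied to $\mu_1$, and a direct estimate for the $\mu_2$ piece then combine to give density. Thus the paper replaces your missing tail estimate by the pair ``ODE lemma $+$ measure splitting''; if you wish to rescue the direct-approximation route, that tail bound is the real work still to be done.
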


\subsection{Generalisation to the non-self-adjoint case}
Theorem~\ref{thm:b1} can be generalised to the non-self-adjoint case as follows. Let $h$ be a complex-valued measurable function on $\bbR_+$ satisfying \eqref{eq:f1}. As in the self-adjoint case, for every $f\in L^2$ we can define the function 
\[
F(t)=\int_0^\infty h(t+s)f(s)\dd s, \quad t>0,
\]
but $F$ is not necessarily in $L^2$. Let us define the operator $\Gamma_hf=F$ with the domain 
\[
\Dom \Gamma_h=\{f\in L^2: F\in L^2\}.
\]
\begin{theorem}\label{thm:b1a}
Assume that $h$ is integrable at infinity \eqref{eq:f1}. 
\begin{enumerate}[\rm(i)]
\item
The above defined operator $\Gamma_h$ is closed. 
\item
We have $C^\infty_\comp(\bbR_+)\subset \Dom \Gamma_h$ and $C^\infty_\comp(\bbR_+)$ is dense in $\Dom \Gamma_h$ with respect to the graph norm of $\Gamma_h$. 
\item
The adjoint of $\Gamma_h$ is given by $\Gamma_h^*=\Gamma_{\overline{h}}$. 
\end{enumerate}
\end{theorem}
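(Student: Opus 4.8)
The three assertions split into a soft part and one genuinely analytic statement, and the plan is to isolate the latter. For (i) I would argue directly: if $f_n\to f$ and $\Gamma_h f_n\to G$ in $L^2$, then for each fixed $t>0$ the function $s\mapsto h(t+s)$ lies in $L^2(\bbR_+)$ by \eqref{eq:f1}, so $F_n(t)=\int_0^\infty h(t+s)f_n(s)\dd s\to\int_0^\infty h(t+s)f(s)\dd s=F(t)$ by continuity of the inner product; on the other hand $F_n=\Gamma_h f_n\to G$ in $L^2$, hence $F_n\to G$ a.e.\ along a subsequence. Thus $F=G\in L^2$, so $f\in\Dom\Gamma_h$ and $\Gamma_h f=G$, which is closedness. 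The inclusion $C^\infty_\comp(\bbR_+)\subset\Dom\Gamma_h$ is equally elementary: if $\supp f\subset[a,\infty)$ with $a>0$, Minkowski's integral inequality gives $\norm{F}_{L^2}\le\int_a^\infty\norm{h(\cdot+s)}_{L^2}\abs{f(s)}\dd s$, and $\norm{h(\cdot+s)}_{L^2}^2=\int_s^\infty\abs{h}^2\le\int_a^\infty\abs{h}^2<\infty$ for $s\ge a$.

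That same support property makes Fubini legitimate: for $f\in C^\infty_\comp(\bbR_+)$ and \emph{any} $g\in L^2$ one has $\langle\Gamma_h f,g\rangle=\langle f,\Phi_g\rangle$, where $\Phi_g(s)=\int_0^\infty\overline{h(s+t)}g(t)\dd t$ is defined pointwise and is locally bounded. Reading this as an identity of functionals shows that $g\in\Dom(\Gamma_h|_{C^\infty_\comp})^*$ precisely when $\Phi_g\in L^2$, i.e.\ when $g\in\Dom\Gamma_{\overline h}$, and then $(\Gamma_h|_{C^\infty_\comp})^*g=\Phi_g=\Gamma_{\overline h}g$. In particular $(\Gamma_h|_{C^\infty_\comp})^*=\Gamma_{\overline h}$, and since $\Gamma_h|_{C^\infty_\comp}\subset\Gamma_h$ this already yields the inclusion $\Gamma_h^*\subseteq\Gamma_{\overline h}$.

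The key observation is that everything now collapses to a single density statement: if $C^\infty_\comp(\bbR_+)$ is a core for $\Gamma_h$, i.e.\ $\Gamma_h=\overline{\Gamma_h|_{C^\infty_\comp}}$, then $\Gamma_h^*=(\Gamma_h|_{C^\infty_\comp})^*=\Gamma_{\overline h}$, which is (iii), while the core property is exactly (ii). A soft duality check confirms that this cannot be shortcut: the reverse inclusion $\Gamma_{\overline h}\subseteq\Gamma_h^*$ is equivalent to the pairing $\langle\Gamma_h\phi,\psi\rangle=\langle\phi,\Gamma_{\overline h}\psi\rangle$ for $\phi\in\Dom\Gamma_h$, $\psi\in\Dom\Gamma_{\overline h}$, and to the core property itself, and none of these follows from (i) together with $\Gamma_h^*\subseteq\Gamma_{\overline h}$ by formal manipulation of adjoints alone.

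To prove the density I would approximate an arbitrary $f\in\Dom\Gamma_h$ in the graph norm in three steps. First, using the Hankel intertwining relation $\Gamma_h R_\tau=L_\tau\Gamma_h$, where $R_\tau$ and $L_\tau$ denote right and left translation by $\tau>0$, the vectors $R_\tau f$ lie in $\Dom\Gamma_h$ with $\Gamma_h R_\tau f=L_\tau\Gamma_h f$, are supported in $[\tau,\infty)$, and converge to $f$ in the graph norm as $\tau\to0$; this moves the support off the origin and is precisely what tames the singularity of $h$ at $t=0$, the feature with no discrete analogue. Second, mollification $f\mapsto f*\varphi_\eps$, read as a Bochner integral of translates so that $\Gamma_h$ passes through it by closedness and the intertwining, produces smooth approximants, still supported away from $0$ and converging in graph norm. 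Third, and this I expect to be the main obstacle, one must truncate at infinity, i.e.\ show $\norm{\Gamma_h(\mathbf{1}_{(R,\infty)}f)}_{L^2}\to0$. The naive Cauchy--Schwarz bound is useless here, as it would require the weighted integrability $\int^\infty t\abs{h(t)}^2\dd t<\infty$, which \eqref{eq:f1} does not provide; instead one must exploit the cancellation encoded in the a priori information $\Gamma_h f\in L^2$, together with a regularisation of the kernel in the spirit of Berg--Szwarc \cite{BS} that replaces $\Gamma_h$ by a bounded Hankel operator to which the truncation estimate applies, followed by a limiting argument removing the regularisation. Controlling the unbounded operator $\Gamma_h$ through this last step is the crux of the proof.
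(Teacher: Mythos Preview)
Your treatment of (i), the inclusion $C^\infty_\comp(\bbR_+)\subset\Dom\Gamma_h$, the Fubini computation giving $(\Gamma_h|_{C^\infty_\comp})^*=\Gamma_{\overline h}$, and the reduction of (iii) to the core property (ii) are all correct and essentially identical to the paper's reasoning. Your Step~1 (approximation by right shifts via $\Gamma_h S_\tau=S_\tau^*\Gamma_h$) also matches the paper exactly.

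The divergence begins after the shift. The paper does \emph{not} attempt mollification plus truncation at infinity. Instead, once the kernel has been replaced by $h_\tau(\cdot)=h(\cdot+\tau)\in L^2$, the paper invokes an external theorem (\cite[Theorem~3.1]{GP2}, stated here as Theorem~\ref{thm:c1}/\ref{thm:c2}): for $h_\tau\in L^2$, the set $\Phi(\calR)$ of Fourier images of rational functions is already a core for $\Gamma_{h_\tau}$. It then approximates each shifted element $S_\tau f$ with $f\in\Phi(\calR)$ by $C^\infty_\comp$ functions, using only the Young-type bound $\norm{\Gamma_h S_\tau g}\le\norm{S_\tau^*h}\,\norm{g}_{L^1}$ and the fact that $\Phi(\calR)\subset L^1\cap L^2$. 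So the hard step---the one you isolate as Step~3---is entirely outsourced to \cite{GP2}.

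Your proposed Step~3 is where I see a genuine gap. The Berg--Szwarc regularisation you invoke depends on writing $h=h_\mu$ as the Laplace transform of a \emph{positive} measure and passing to $\dd\nu=x\,\dd\mu$, which is Carleson precisely because $\mu$ is finite and positive; this is what produces a \emph{bounded} Hankel operator $\Gamma_\nu$ controlling the tails. For a general complex-valued $h$ satisfying only \eqref{eq:f1} there is no such representation, and no obvious candidate for a bounded Hankel operator majorising the truncated pieces. The paper in fact uses the Berg--Szwarc idea only in the self-adjoint Theorems~\ref{thm:b1} and~\ref{thm:b2}, never in Theorem~\ref{thm:b1a}. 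Unless you can supply a substitute mechanism for the tail control that does not rely on positivity, your Step~3 remains a heuristic rather than a proof; following the paper's route through \cite{GP2} avoids this difficulty altogether.
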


This theorem is a generalisation of \cite[Theorem~3.1]{GP2} and in fact our proof of Theorem~\ref{thm:b1a} uses \cite{GP2}; we will give more details in Section~\ref{sec.c}.

\subsection{Domains: general case}
Let us return to the self-adjoint case and lift the assumption \eqref{eq:f1}. Below we discuss the analogue of Theorem~\ref{thm:b4a}. We recall that $\Gamma_\mu$ is defined as the self-adjoint operator corresponding to the quadratic form $Q_\mu$, see Section~\ref{sec.z2}.

The following example demonstrates that even for $f\in C^\infty_\comp(\bbR_+)$ the function $\Gamma_\mu f$ defined by the integral \eqref{eq:f2} may not be in $L^2$. 
\begin{example*}
For $0<\alpha<1/2$, let $h(t)=t^{-\alpha}$, as in \eqref{b0a}. 
Then for $f\in C^\infty_\comp(\bbR_+)$ we have 
\[
\int_0^\infty h(t+s)f(s)\dd s=\left(\int_0^\infty f(s)\dd s\right)t^{-\alpha}+O(t^{-\alpha-1})
\]
as $t\to\infty$. In particular, if the integral of $f$  is non-zero, then the above function does NOT belong to $L^2$. 
\end{example*}
This example suggests that one should consider functions $f\in C^\infty_\comp(\bbR_+)$ with \emph{zero average}. We denote 
\[
C^\infty_{\comp,0}(\bbR_+)=\left\{f\in C^\infty_\comp(\bbR_+): \int_0^\infty f(t)\dd t=0\right\}.
\]
Equivalently, 
\[
C^\infty_{\comp,0}(\bbR_+)=\{g': g\in C^\infty_\comp(\bbR_+)\}.
\]
(This set plays the role of $\calF_0\subset\ell^2$ in the theorem below.)
Heuristically, we observe that for functions $f\in C^\infty_{\comp,0}(\bbR_+)$, one can integrate by parts in the definition \eqref{z00}. To explain this, let us first consider the derivative 
\[
-h_\mu'(t)=\int_0^\infty e^{-tx}x\dd\mu(x)=h_\nu(t),
\]
with the new measure $\dd\nu(x)=x\dd\mu(x)$. It is clear that $\nu\in\calM$. 
We have 
\begin{equation}
h_\nu(t)=O(1/t) \quad\text{ as } t\to\infty. 
\label{z16}
\end{equation}
Indeed, for $t\geq1$ we have
\begin{align*}
-h'_\mu(t)&=\int_0^\infty e^{-tx}x\dd\mu(x)
\leq \int_0^\infty e^{-tx/2}xe^{-x/2}\dd\mu(x)
\\
&=\frac1t \int_0^\infty e^{-tx/2}(tx)e^{-x/2}\dd\mu(x)
\leq \frac{2e^{-1}}{t}\int_0^\infty e^{-x/2}\dd\mu(x)=\frac{C}{t},
\end{align*}
where we have used the elementary estimate
$e^{-x/2}x\leq 2e^{-1}$
and the assumption that the Laplace transform of $\mu$ is finite. The estimate \eqref{z16} shows that $h_\nu$ is square integrable at infinity, and so $\nu$ satisfies the hypothesis of Theorem~\ref{thm:b1}. 
\begin{remark*}
The situation here is slightly more subtle than in the discrete case. In contrast to the discrete case, $\Gamma_\nu$ is not necessarily bounded. In other words, $h_\nu$ is not necessarily $O(1/t)$ as $t\to0$. 
\end{remark*}
Now if $f\in C^\infty_{\comp,0}(\bbR_+)$, we write $f=g'$ with $g\in C^\infty_{\comp}(\bbR_+)$ and integrate by parts:
\begin{equation}
\int_0^\infty h_\mu(t+s)g'(s)\dd s=\int_0^\infty h_\nu(t+s)g(s)\dd s, 
\label{z17}
\end{equation}
or more succinctly
\[
\Gamma_\mu g'=\Gamma_\nu g.
\]
This relation is key for understanding the definition of $\Gamma_\mu$. Essentially, it reduces the analysis of $\Gamma_\mu$ to the analysis of $\Gamma_\nu$, and the latter is covered by Theorem~\ref{thm:b1}. 

Let us state the continuous analogue of Theorem~\ref{thm:b4a}. 
\begin{theorem}\label{thm:b2}
Let $\mu\in\calM$ and $\dd\nu(x)=x\dd\mu(x)$. 
\begin{enumerate}[\rm (i)]
\item
For $f\in L^2$, denote 
\begin{equation}
F(t)=-\int_0^\infty h_\nu(t+s)f(s)\dd s, \quad t>0.
\label{eq:f11}
\end{equation}
Then $f\in\Dom\Gamma_\mu$ if and only if $F$ is a distributional derivative of a function in $L^2$. Moreover, in this case $\Gamma_\mu f$ is uniquely defined by 
\[
(\Gamma_\mu f)'(t)=F(t), \quad t>0.
\]
\item
We have $C^\infty_{\comp,0}(\bbR_+)\subset\Dom\Gamma_\mu$ and $C^\infty_{\comp,0}(\bbR_+)$ is dense in $\Dom\Gamma_\mu$ in the graph norm of $\Gamma_\mu$. Equivalently, the restriction $\Gamma_\mu|_{C^\infty_{\comp,0}(\bbR_+)}$ is essentially self-adjoint.
\end{enumerate}
\end{theorem}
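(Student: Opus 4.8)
The plan is to reduce everything to the operator $\Gamma_\nu$, which is governed by Theorem~\ref{thm:b1} since $h_\nu=O(1/t)$ is square integrable at infinity, using the integration-by-parts identity \eqref{z17}. Two operators organise the argument: the \emph{minimal} operator $\Gamma_\mu^{(0)}:=\Gamma_\mu|_{C^\infty_{\comp,0}(\bbR_+)}$, which by \eqref{z17} acts by $\Gamma_\mu^{(0)}g'=\Gamma_\nu g$, and the \emph{maximal} operator $\Gamma_\mu^{\max}$ defined exactly as in part~(i): its domain is the set of $f\in L^2$ for which the function $F$ of \eqref{eq:f11} is the distributional derivative of some $w\in L^2$, and $\Gamma_\mu^{\max}f:=w$ (this is well defined, since two $L^2$ primitives of $F$ differ by a constant and hence coincide). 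Part~(i) is the assertion $\Gamma_\mu=\Gamma_\mu^{\max}$, part~(ii) is the essential self-adjointness of $\Gamma_\mu^{(0)}$; I will show these are one and the same statement and prove it in the form of essential self-adjointness.

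First I would record the elementary inclusions. For $g\in C^\infty_\comp(\bbR_+)$ the relation $\calL(g')(x)=x\,\calL g(x)$ together with $\dd\nu=x\,\dd\mu$ gives, for every $\phi\in\Dom Q_\mu$, the identity $Q_\mu(g',\phi)=Q_\nu(g,\phi)=\langle\Gamma_\nu g,\phi\rangle$ (using $g\in\Dom\Gamma_\nu$ from Theorem~\ref{thm:b1}), the required absolute convergence following from the exponential decay of $\calL g$. Hence $C^\infty_{\comp,0}(\bbR_+)\subset\Dom\Gamma_\mu$ with $\Gamma_\mu g'=\Gamma_\nu g$, i.e.\ $\Gamma_\mu^{(0)}\subset\Gamma_\mu$. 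Next I would identify the adjoint: for $v,p\in L^2$, a Fubini computation (legitimate because $g$ is supported away from the origin) turns the relation $\langle\Gamma_\nu g,v\rangle=\langle g',p\rangle$, for all $g\in C^\infty_\comp(\bbR_+)$, into $F=p'$ in the distributional sense, where $F$ is \eqref{eq:f11} for $f=v$. This shows $(\Gamma_\mu^{(0)})^*=\Gamma_\mu^{\max}$. Combined with $\Gamma_\mu^{(0)}\subset\Gamma_\mu=\Gamma_\mu^*$ we obtain the chain $\Gamma_\mu^{(0)}\subset\Gamma_\mu\subset(\Gamma_\mu^{(0)})^*=\Gamma_\mu^{\max}$; since $\Gamma_\mu$ is self-adjoint, the standard adjoint calculus shows that $\Gamma_\mu=\Gamma_\mu^{\max}$ (part~(i)) and the essential self-adjointness of $\Gamma_\mu^{(0)}$ (part~(ii)) are equivalent. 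The forward inclusion $\Gamma_\mu\subset\Gamma_\mu^{\max}$ of part~(i) is already contained in this chain, and may also be seen directly from the form identity $Q_\mu(f,\psi')=Q_\nu(f,\psi)=-\langle F,\psi\rangle$, valid for $f\in\Dom\Gamma_\mu$ and $\psi\in C^\infty_\comp(\bbR_+)$, which reads $(\Gamma_\mu f)'=F$.

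It therefore remains to prove that $\Gamma_\mu^{(0)}$ is essentially self-adjoint. As $\Gamma_\mu^{(0)}\geq0$, it suffices to show $\Ker(\Gamma_\mu^{\max}+I)=\{0\}$. So suppose $\Gamma_\mu^{\max}v=-v$ with $v\in L^2$. By the description of $\Gamma_\mu^{\max}$ this means $(-v)'=F$, i.e.\ $v'=\Gamma_\nu v$, where $\Gamma_\nu v(t)=\int_0^\infty e^{-tx}\calL v(x)\,\dd\nu(x)$ is a genuine (smooth) function on $\bbR_+$; in particular $v\in C^1(\bbR_+)$. Pairing the equation with $v$ on $(\delta,R)$ and taking real parts gives $\tfrac12\left(\abs{v(R)}^2-\abs{v(\delta)}^2\right)=\Re\int_\delta^R(\Gamma_\nu v)\overline v\,\dd t$, and a further application of Fubini shows that the right-hand side converges to $\int_0^\infty\abs{\calL v(x)}^2\dd\nu(x)=Q_\nu[v]\geq0$ as $\delta\to0$, $R\to\infty$. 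Since $v\in L^2$ is continuous, $\liminf_{R\to\infty}\abs{v(R)}=0$, so the left-hand side is $\leq0$ along a suitable sequence; hence $Q_\nu[v]=0$. Thus $\calL v=0$ $\nu$-almost everywhere, whence $\Gamma_\nu v\equiv0$, so $v'=0$ and $v\in L^2(\bbR_+)$ forces $v=0$.

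The main obstacle is this last step: controlling the putative deficiency vector $v$. In contrast to the discrete theorem of Berg and Szwarc, where $\gamma_\nu$ is bounded, here $\Gamma_\nu$ may be unbounded, so one must justify with care the regularity $v\in C^1(\bbR_+)$, the Fubini interchange identifying the limit of $\Re\int_\delta^R(\Gamma_\nu v)\overline v$ with $Q_\nu[v]$ (allowing a priori $Q_\nu[v]=+\infty$), and the behaviour of $v$ at the endpoints $0$ and $\infty$ needed to discard the boundary terms. The Laplace-domain representation $\Gamma_\nu v(t)=\int_0^\infty e^{-tx}\calL v(x)\,\dd\nu(x)$ and the decay estimate $\norm{h_\nu(t+\bigcdot)}_{L^2}=O(t^{-1/2})$ following from \eqref{z16} are the tools I would use to make these steps rigorous.
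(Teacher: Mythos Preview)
Your reduction of the problem to essential self-adjointness of $\Gamma_\mu^{(0)}$ via the chain $\Gamma_\mu^{(0)}\subset\Gamma_\mu\subset(\Gamma_\mu^{(0)})^*=\Gamma_\mu^{\max}$ is clean and correct, and it makes (i) and (ii) equivalent exactly as you say. The paper takes a different route: it proves (ii) first by splitting $\mu=\mu_1+\mu_2$ with $\mu_1$ finite, applies the ODE argument (the paper's Lemma~\ref{lma.b2}) only to $\mu_1$ --- where $\Gamma_{\nu_1}$ is genuinely \emph{bounded} because $\nu_1$ is Carleson --- and then passes to general $\mu$ using shift approximation and the exponential decay of $h_{\mu_2}$. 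Part (i) is then deduced from (ii). Your approach bypasses the shift/splitting machinery entirely and attacks the deficiency equation $v'=\Gamma_\nu v$ for the full (possibly unbounded) $\Gamma_\nu$; this is more direct but, as you recognise, the ODE argument is harder.

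The genuine gap is at $t\to0$. Your tools (the Laplace representation and $\norm{h_\nu(t+\cdot)}=O(t^{-1/2})$ for large $t$) do give $v\in C^1(\bbR_+)$ and $v(t)\to0$ as $t\to\infty$, so with $R\to\infty$ and dominated convergence (domination by $\norm{v}^2 e^{-\delta x}/(2x)\in L^1(\nu)$) you get
\[
-\tfrac12\abs{v(\delta)}^2=\Re\int_0^\infty \calL v(x)\,\overline{\phi_\delta(x)}\,\dd\nu(x),
\qquad \phi_\delta(x):=\int_\delta^\infty e^{-tx}v(t)\,\dd t.
\]
But letting $\delta\to0$ is \emph{not} handled by those tools: you have no domination independent of $\delta$, and you have no control on $\abs{v(\delta)}$. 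The fix is to split $\calL v=\phi_\delta+\psi_\delta$ with $\psi_\delta(x)=\int_0^\delta e^{-tx}v(t)\,\dd t$ and observe the elementary Carleson-type bound
\[
\int_0^\infty\abs{\psi_\delta}^2\dd\nu
\le \norm{v}_{L^2(0,\delta)}^2\int_0^\infty \min(\delta x,\tfrac12)\,\dd\mu(x)
\le C\,\norm{v}_{L^2(0,\delta)}^2\to0,
\]
where $C=\int_0^\infty\min(x,\tfrac12)\dd\mu(x)<\infty$ follows from $\mu\in\calM$. Then the displayed identity reads $-\tfrac12\abs{v(\delta)}^2=B_\delta^2+O(A_\delta B_\delta)$ with $B_\delta^2=\int\abs{\phi_\delta}^2\dd\nu$ and $A_\delta^2=\int\abs{\psi_\delta}^2\dd\nu\to0$; negativity forces $B_\delta\le A_\delta\to0$, and Fatou gives $Q_\nu[v]\le\liminf B_\delta^2=0$. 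From there your conclusion $\calL v=0$ $\nu$-a.e.\ $\Rightarrow$ $\Gamma_\nu v=0$ $\Rightarrow$ $v'=0$ $\Rightarrow$ $v=0$ goes through. Without this estimate, the ``Fubini shows the right-hand side converges to $Q_\nu[v]$'' step is circular when $Q_\nu[v]=+\infty$ is not yet excluded.
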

The proof is given in Section~\ref{sec.f}. 

\subsection{The kernel of $\Gamma_\mu$}
As a ``bonus part'', we give a simple characterisation of the kernel of $\Gamma_\mu$ that follows readily from our construction:
\begin{theorem}\label{prp.z1a}
Let $\mu\in\calM$. 
The kernel of $\Gamma_\mu$ is either trivial or infinite dimensional. 
It is infinite dimensional if and only if $\mu$ is a purely atomic measure supported on a sequence of points $\{a_k\}_{k\in\bbZ_+}\subset \bbR_+$ satisfying the Blaschke condition \begin{equation}
\sum_{k\in\bbZ_+} \frac{a_k}{a_k^2+1}<\infty.
\label{z0a}
\end{equation}
\end{theorem}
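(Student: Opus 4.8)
The plan is to translate the problem into the Hardy space of the right half-plane $\Pi=\{\Re z>0\}$ and read off the answer from classical factorisation theory.

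First I would identify the kernel. Since $\Gamma_\mu\geq0$ is the self-adjoint operator generated by the closed form $Q_\mu$, a nonnegative operator satisfies $\Ker\Gamma_\mu=\{f\in\Dom Q_\mu:Q_\mu[f]=0\}$. By the formula \eqref{z4}, $Q_\mu[f]=0$ is equivalent to $\calL f=0$ $\mu$-a.e., and any such $f$ automatically lies in $\Dom Q_\mu$. Hence
\[
\Ker\Gamma_\mu=\{f\in L^2:\calL f(x)=0\text{ for $\mu$-a.e. }x>0\}.
\]
Next I would invoke Paley--Wiener: for $f\in L^2(\bbR_+)$ the Laplace transform \eqref{b0} extends to a function analytic on $\Pi$, and (up to a constant) $\calL$ is a unitary map of $L^2(\bbR_+)$ onto the Hardy space $H^2(\Pi)$. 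In particular $\calL$ is injective, so $f\neq0$ forces $G:=\calL f\not\equiv0$, and the zeros of any nonzero $G\in H^2(\Pi)$ form a Blaschke sequence for $\Pi$. For a sequence of real points $a_k>0$ the half-plane Blaschke condition $\sum_k\Re a_k/(1+\abs{a_k}^2)<\infty$ is precisely \eqref{z0a}.

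For the forward direction, suppose $\Ker\Gamma_\mu\neq\{0\}$ and take $0\neq f$ in it, $G=\calL f\not\equiv0$. On the ray $(0,\infty)$ the restriction of $G$ is real-analytic and not identically zero, so its zero set $Z$ is discrete in $(0,\infty)$, hence countable. As $\calL f=0$ $\mu$-a.e., the measure $\mu$ is concentrated on $Z$; thus $\mu$ is purely atomic, supported on a sequence $\{a_k\}$ of real zeros of $G$, and, being a subsequence of the zeros of an $H^2(\Pi)$ function, $\{a_k\}$ satisfies \eqref{z0a}. (A nonzero continuous part of $\mu$ would have to charge the countable set $Z$, which is impossible.) Conversely, if $\mu$ is purely atomic on $\{a_k\}$ subject to \eqref{z0a}, the Blaschke product $B$ over $\{a_k\}$ converges to an inner function on $\Pi$, and, since division by an inner function preserves $H^2$,
\[
\{G\in H^2(\Pi):G(a_k)=0\ \forall k\}=B\cdot H^2(\Pi).
\]
Pulling back by $\calL^{-1}$ gives $\Ker\Gamma_\mu=\calL^{-1}(B\,H^2(\Pi))$, which is isometrically isomorphic to $H^2(\Pi)$ and therefore infinite-dimensional. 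This simultaneously proves the dichotomy and the stated criterion.

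The conceptual content lives entirely in the Hardy-space dictionary, so the main obstacles are technical: getting the Paley--Wiener identification $\calL\colon L^2(\bbR_+)\to H^2(\Pi)$ with the correct normalisation and surjectivity, checking that the half-plane Blaschke condition for real zeros coincides (up to the elementary comparison $\abs{a_k+1}^2\asymp a_k^2+1$) with \eqref{z0a}, and verifying that $G/B\in H^2(\Pi)$ whenever $G\in H^2(\Pi)$ vanishes on the atoms. I expect the measure-theoretic step---passing from ``$\calL f=0$ $\mu$-a.e.'' to pure atomicity via the discreteness of the real zero set---to be the only place needing a careful argument, everything else being standard.
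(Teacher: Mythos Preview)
Your proposal is correct and follows essentially the same route as the paper: identify $\Ker\Gamma_\mu$ via the quadratic form as $\{f\in L^2:\calL f=0\text{ $\mu$-a.e.}\}$, use that $\calL f$ lies in the Hardy space of the right half-plane and is real-analytic on $(0,\infty)$ to force $\supp\mu$ into a Blaschke zero set, and for the converse exhibit the kernel as (the pullback of) the Beurling subspace $B\cdot H^2(\Pi)$. The paper packages the first step as $\Ker\Gamma_\mu=\Ker L$ and proves the Hardy-space part as Theorem~\ref{thm.b1}(iv), but the substance is identical to what you wrote.
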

The fact that the kernel is either trivial or infinite dimensional is well-known in the context of bounded Hankel operators.

\subsection{Dictionary}
For the reader's guidance, we display the table of correspondences between the discrete and continuous cases. 

\medskip

{\renewcommand{\arraystretch}{1.3}  
\begin{center}
\begin{tabular}{|c|c|}
\hline
Discrete & Continuous
\\
\hline
$\mu(\bbR\setminus(-1,1))=0$ & $\mu(\bbR\setminus(0,\infty))=0$ 
\\
Carleson condition \eqref{z6a} & Carleson condition \eqref{z3}
\\
$\displaystyle h_j=\int_{-1}^1 x^j \dd\mu(x)$ & $\displaystyle h(t)=\int_0^\infty e^{-tx}\dd\mu(x)$
\\
$\gamma_\mu=\{h_{j+k}\}_{j,k=0}^\infty$ & $\Gamma_h$ defined by \eqref{z00}
\\
$q_\mu$ defined by \eqref{z7} & $Q_\mu$ defined by \eqref{z4}
\\
$\calF$ & $C^\infty_\comp(\bbR_+)$
\\
$h\in\ell^2$ & $\int_0^\infty \abs{h(s+t)}^2\dd s<\infty$ $\forall t>0$
\\
$\calF_0$ & $C^\infty_{\comp,0}(\bbR_+)$
\\
$h_k-h_{k+2}$ & $-h'_\mu(t)$
\\
$I-S_*^2$ & $-\dfrac{\dd}{\dd t}$
\\
Theorem~1.N, N$=1,2,3,4$ & Theorem~2.N, N$=1,2,3,4$ 
\\
\hline
\end{tabular}
\end{center}
}

\subsection{Main ideas of proof}
In all Theorems~\ref{thm.z1}-\ref{thm:b2}, the difficult part is (ii). Our proof of Theorem~\ref{thm.z1}(ii) follows Yafaev's general approach of \cite{Ya1}, which is the study of the Laplace transform $\calL$, understood as a closed operator from $L^2(\bbR_+)$ to $L^2(\bbR_+,\mu)$. There are two common elements in the proofs as follows. 

\emph{Commutation relations with the shift semigroup.}
For $\tau>0$, we denote by $S_\tau$ the right shift operator for functions on $\bbR_+$, i.e. 
\[
(S_\tau f)(t)=
\begin{cases}
0, &0<t<\tau,
\\
f(t-\tau), & t\geq \tau.
\end{cases}
\]
We observe that for the Laplace operator $\calL$ we have 
\begin{equation}
(\calL S_\tau f)(x)=e^{-\tau x}(\calL f)(x),
\label{b5}
\end{equation}
and for the Hankel operator $\Gamma_h$ we have
\[
\int_0^\infty h(t+\tau+s)f(s)\dd s
=
\int_\tau^\infty h(t+s)f(s-\tau)\dd s,
\]
that is
\begin{equation}
S_\tau^*\Gamma_h=\Gamma_h S_\tau
\label{z15}
\end{equation}
on suitable domains. These well-known commutation relations are key for our proofs. For example, \eqref{z15} shows that one can approximate any $f\in\Dom\Gamma_h$ by the shifts $S_\tau f$, $\tau\to0$, in the graph norm of $\Gamma_h$.  

\emph{Splitting the measure $\mu$.}
For $\mu\in\calM$ we will write 
\begin{equation}
\mu=\mu_1+\mu_2, \quad \supp\mu_1\subset[0,1], \quad \supp\mu_2\subset[1,\infty)
\label{b4a}
\end{equation}
and handle the measures $\mu_1$ and $\mu_2$ separately. 
Observe that $\mu_1$ is finite and moreover, the measure 
\[
\dd\nu_1(x)=x\dd\mu_1(x)
\]
satisfies the Carleson condition \eqref{z3}. This allows us to use the trick of Berg-Szwarc \cite{BS} to handle the measure $\mu_1$. Handling $\mu_2$ is easier because the corresponding kernel function $h_{\mu_2}(t)$ decays exponentially as $t\to\infty$.

\subsection{The structure of the paper}
In Section~\ref{sec.b} we study the operator of the Laplace transform and prove Theorems~\ref{thm.z1} and \ref{prp.z1a}. In Section~\ref{sec.f} we prove Theorem~\ref{thm:b1}. The proof of Theorem~\ref{thm:b2} is closely aligned with the proof of Theorem~\ref{thm:b1} and is given in Section~\ref{sec.g}. In Section~\ref{sec.c} we consider the non-self-adjoint case and prove Theorem~\ref{thm:b1a}.

\section{The Laplace transform}\label{sec.b}

Here we study the Laplace transform and prove Theorems~\ref{thm.z1} and \ref{prp.z1a}. 

\subsection{Definitions}
Throughout the section, we fix a measure $\mu\in\calM$.
Along with $L^2=L^2(\bbR_+)$, we also consider the space $L^2(\mu)=L^2(\bbR,\mu)$.  We denote the inner product in $L^2$ (resp. in $L^2(\mu)$) by $\jap{f,g}$ (resp. by $\jap{f,g}_\mu$), with the same convention for the norms. Our inner products are linear in the first argument and anti-linear in the second one.

Along with the Laplace transform $\calL$ (see \eqref{b0}), we set 
\begin{align*}
(\calL_\mu f)(t)&=\int_0^\infty e^{-tx}f(x)\dd\mu(x), \quad t>0,\quad f\in L^2(\mu).
\end{align*}
Of course, for $f\in L^2$, the Laplace transform $\calL f$ is not necessarily in $L^2(\mu)$, and likewise for $f\in L^2(\mu)$ the Laplace transform $\calL_\mu f$ is not necessarily in $L^2$. We would like to restrict $\calL$ and $\calL_\mu$ onto appropriate domains to obtain closed linear operators 
\[
L: L^2\to L^2(\mu)\quad\text{ and }\quad L_\mu: L^2(\mu)\to L^2.
\]
(Strictly speaking, here $L$ depends on $\mu$, because the target space depends on $\mu$, but for the purposes of readability we do not reflect this dependence in our notation.)
One can define the \emph{maximal} and \emph{minimal} realisations of $\calL$ and $\calL_\mu$ as follows. 

\emph{Maximal realisations:} let $L$ and $L_\mu$ be the linear operators defined as the restrictions of $\calL$, $\calL_\mu$ onto their \emph{maximal domains}:
\begin{align*}
L f&=\calL f, & f\in \Dom L:&=\{f\in L^2: \calL f\in L^2(\mu)\},
\\
L_\mu f&=\calL_\mu f,  & f\in \Dom L_\mu:&=\{f\in L^2(\mu): \calL_\mu f\in L^2\}.
\end{align*}

\emph{Minimal realisations:}
let $\calD\subset L^2$ and $\calD_\mu\subset L^2(\mu)$ be the sets of bounded compactly supported functions on $\bbR_+$ (this means, in particular, that the supports are bounded away from the origin), and let 
\[
L^{\min}=L|_{\calD},\quad L_\mu^{\min}=L_\mu |_{\calD_\mu}.
\]
Our aim is to show that the minimal and maximal realisations coincide, up to taking closures. We denote by $\overline{A}$ the closure of an operator $A$. 

\begin{remark*}
Of course, instead of the sets $\calD\subset L^2$ and $\calD_\mu\subset L^2(\mu)$ we could take some other suitable dense sets; for example, we could replace $\calD$ by $C^\infty_\comp(\bbR_+)$.
\end{remark*}

\subsection{Main statement and discussion}

\begin{theorem}\label{thm.b1}
Let $\mu\in\calM$. Then:
\begin{enumerate}[\rm (i)]
\item
The operators  $L$ and $L_\mu$ are closed. 
\item
One has 
\[
\overline{L^{\min}}=L\quad\text{ and }\quad\overline{L_\mu^{\min}}=L_\mu.
\]
In other words, $\calD$ is dense in $\Dom L$ with respect to the graph norm of $L$ and $\calD_\mu$ is dense in $\Dom L_\mu$ with respect to the graph norm of $L_\mu$. 
\item
 We have 
\[
L_\mu^*=L \quad\text{ and }\quad L^*=L_\mu.
\]
\item
The kernel of $L_\mu$ is trivial. The kernel of $L$ is either trivial or infinite dimensional. It is infinite dimensional if and only if $\mu$ is a purely atomic measure supported on a sequence of points $\{a_k\}_{k\in\bbZ_+}\subset \bbR_+$ satisfying the Blaschke condition \eqref{z0a}. 
\end{enumerate}
\end{theorem}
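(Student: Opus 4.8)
The plan is to extract the closedness statement (i) and the adjoint relations (iii) from one Fubini computation, to prove the density statement (ii) by an approximation argument built on the shift relation \eqref{b5} and the splitting \eqref{b4a}, and finally to obtain the kernel description (iv) from the Paley--Wiener realisation of $\calL$.

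\emph{Closedness and adjoints.} The computational heart is the formal adjoint identity: for $f\in\calD$ and $g\in\calD_\mu$, Fubini's theorem gives
\[
\jap{L^{\min}f,g}_\mu=\int_0^\infty\!\int_0^\infty e^{-tx}f(t)\overline{g(x)}\,\dd t\,\dd\mu(x)=\jap{f,L_\mu^{\min}g}.
\]
Fubini applies because $f$ is bounded with $\supp f\subset[a,b]\subset\bbR_+$, so that $\int_0^\infty e^{-ax}\abs{g(x)}\dd\mu(x)\le\norm{g}_{L^2(\mu)}\,h_\mu(2a)^{1/2}<\infty$ by Cauchy--Schwarz. I would upgrade this to the statements $(L^{\min})^*=L_\mu$ and $(L_\mu^{\min})^*=L$ with the full maximal operators on the right: the inclusions $L_\mu\subseteq(L^{\min})^*$ and $L\subseteq(L_\mu^{\min})^*$ are the identity above with $g$, resp.\ $f$, allowed to range over the maximal domain (the same bound still works), while the reverse inclusions hold because $\calL_\mu g$ and $\calL f$ are well defined pointwise for each $t,x>0$ (as $e^{-tx}\in L^2(\mu)$ in $x$ and $e^{-tx}\in L^2$ in $t$), so that testing the defining relation of the adjoint against all of $\calD$ forces the candidate adjoint to coincide with $\calL_\mu g$, resp.\ $\calL f$, almost everywhere. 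Since an adjoint operator is automatically closed, (i) follows at once.

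\emph{Density (the hard part).} By the previous step $\overline{L^{\min}}=(L^{\min})^{**}=(L_\mu)^*$, and likewise $\overline{L_\mu^{\min}}=(L)^*$; hence, $L$ and $L_\mu$ being closed, the two assertions in (ii) are dual to one another, and it suffices to prove, say, that $\calD$ is a core for $L$. Given $f\in\Dom L$, I would first apply the shift: by \eqref{b5}, $S_\tau f\to f$ in $L^2$ while $\calL(S_\tau f)=e^{-\tau x}\calL f\to\calL f$ in $L^2(\mu)$ by dominated convergence, so one may assume $\supp f\subset[\delta,\infty)$. It remains to truncate the tail $t\to\infty$ and then smooth; on a fixed compact $t$-interval the smoothing step affects neither limit, since $\abs{\calL\psi(x)}\le\norm{\psi}_{L^2}(b-a)^{1/2}$ controls $\calL$ uniformly. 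Writing $\mu=\mu_1+\mu_2$ as in \eqref{b4a}, the tail $\calL(f\1_{(R,\infty)})$ is controlled in $L^2(\mu_2)$ by the direct estimate $\tfrac12\norm{f}_{L^2}^2\,h_\mu(2R)\to0$, using $x\ge1$ on $\supp\mu_2$. The genuine obstacle is $\mu_1$: it is finite but need not satisfy the Carleson condition, and the naive tail estimate diverges near $x=0$ (this is the analytic manifestation of the possibly slow decay of $h_\mu$ at infinity). Here I would invoke the Berg--Szwarc device: the measure $\dd\nu_1(x)=x\,\dd\mu_1(x)$ does satisfy the Carleson condition \eqref{z3}, so the associated Laplace operator is bounded, and this boundedness supplies exactly the control of $\calL$ near $x=0$ that the elementary estimate misses. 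I expect this $\mu_1$-estimate to be the main technical difficulty of the whole theorem.

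\emph{Conclusion and kernels.} Once (ii) is established, $L^*=(\overline{L^{\min}})^*=(L^{\min})^*=L_\mu$ and symmetrically $L_\mu^*=L$, completing (iii). For (iv), the triviality of $\Ker L_\mu$ is immediate: $L_\mu g=0$ means the complex measure $g\,\dd\mu$ has vanishing Laplace transform (the integrals converge absolutely since $e^{-tx}\in L^2(\mu)$), whence $g\,\dd\mu=0$ and $g=0$ in $L^2(\mu)$ by injectivity of the Laplace transform on measures. For $\Ker L$ I would use Paley--Wiener: $\calL$ identifies $L^2(\bbR_+)$ with the Hardy space $H^2$ of the half-plane $\{\Re z>0\}$, so that a nonzero $f\in\Ker L$ corresponds to a nonzero analytic $G=\calL f$ vanishing $\mu$-a.e.\ on $\bbR_+$. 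As the zero set of such a $G$ has no accumulation point inside $\{\Re z>0\}$, this is possible only if $\mu$ is purely atomic, supported on a sequence $\{a_k\}\subset\bbR_+$ lying in the zero set of a nonzero $H^2$ function, i.e.\ satisfying the Blaschke condition \eqref{z0a}; conversely, when \eqref{z0a} holds the pullback under $\calL$ of $B\cdot H^2$, with $B$ the Blaschke product of the $a_k$, is an infinite-dimensional subspace of $\Ker L$. This yields both the characterisation and the trivial-or-infinite-dimensional dichotomy.
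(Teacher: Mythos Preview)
Your overall architecture matches the paper's: establish $(L^{\min})^*=L_\mu$ and $(L_\mu^{\min})^*=L$ by Fubini, deduce closedness of $L,L_\mu$ as adjoints, prove that $\calD$ is a core for $L$ (which by duality gives the second half of (ii) and then (iii)), and read off (iv) from the Hardy space realisation. Parts (i), (iii), (iv) are correct and essentially identical to the paper's treatment.

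The gap is in your handling of the $\mu_1$-part of (ii). You propose to approximate $f$ (already shifted, $\supp f\subset[\delta,\infty)$) by the truncations $f\1_{[\delta,R]}$, and to control the error $\calL(f\1_{(R,\infty)})$ in $L^2(\mu_1)$ by invoking the boundedness of $\calL:L^2\to L^2(\nu_1)$ with $\dd\nu_1=x\,\dd\mu_1$. But that boundedness reads
\[
\int_0^1 \abs{\calL g(x)}^2\,x\,\dd\mu_1(x)\le C\norm{g}^2,
\]
which is \emph{weaker} than the $L^2(\mu_1)$-norm near $x=0$ by the factor $x$; it goes the wrong way. There is no evident mechanism by which this estimate salvages $\norm{\calL(f\1_{(R,\infty)})}_{L^2(\mu_1)}\to0$. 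The underlying obstruction is that the hypothesis $\calL f\in L^2(\mu_1)$ may rely on cancellation in $f$, and sharp truncation destroys that cancellation; the elementary bound $\abs{\calL(f\1_{[\delta,R]})(x)}\le\sqrt{R}\,\norm{f}$ on $(0,\eta)$ competes against $\mu_1((0,\eta))$ in a way that need not balance.

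The paper does \emph{not} truncate. For finite $\mu_1$ it argues by orthogonality: suppose $u\in\Dom L$ is orthogonal to $C^\infty_{\comp,0}(\bbR_+)$ in the graph norm. Writing $f=g'$ and using $\calL g'(x)=x\,\calL g(x)$, the graph-orthogonality relation becomes
\[
\jap{u,g'}+\jap{\calL u,\calL g}_{\nu_1}=0,\qquad g\in C^\infty_\comp(\bbR_+),
\]
and \emph{now} the Berg--Szwarc fact that $\nu_1$ is Carleson is used, not as an a priori tail bound, but to identify $\jap{\calL u,\calL g}_{\nu_1}=\jap{u,\Gamma_{\nu_1}g}$ with a \emph{bounded} positive operator $\Gamma_{\nu_1}$. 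One obtains $u'=\Gamma_{\nu_1}u$ distributionally; a short ODE argument (integrate $u'\overline u$ and use $\jap{\Gamma_{\nu_1}u,u}\ge0$ together with $u\in L^2$) forces $u=0$. This yields density for $\mu_1$; the transfer to general $\mu$ via the shift and the $\mu_2$-estimate is exactly as you outline. So your plan is on track up to the point where you must prove the core property for finite $\mu_1$, and there you need to replace the direct truncation by this orthogonality/ODE step.
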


Parts (i) and (iv) are very easy. The most subtle part is (ii) (and (iii) follows easily from (ii)). Part (ii) was proved in Section 3 of Yafaev's paper \cite{Ya1} under the additional assumption \eqref{z5}. Here we show that this assumption is not needed. Our proof is based on a different set of ideas from \cite{Ya1} and may be of an independent interest. 

In Section~\ref{sec.b2} we prove that $\calD$ is dense in $\Dom L$ in the graph norm of $L$. Surprisingly, this turns out to be a non-trivial task, and this is the central part of the proof. In Section~\ref{sec.b3}, we prove the rest of Theorem~\ref{thm.b1}.

\subsection{$\calD$ is dense in $\Dom L$ in the graph norm}
\label{sec.b2}

\begin{lemma}\label{lma.b2}
Let $A$ be a bounded self-adjoint positive semi-definite operator in $L^2$. Suppose that for some $u\in L^2$ and all $f\in C^\infty_{\comp}(\bbR_+)$ we have
\[
\jap{u,f'}+\jap{u,Af}=0.
\]
Then $u=0$. 
\end{lemma}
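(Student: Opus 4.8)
The plan is to convert the variational hypothesis into a distributional differential equation for $u$ and then close the argument with a one-line energy identity that exploits the positivity of $A$. The annihilation condition $\jap{u,f'}+\jap{u,Af}=0$ for all $f\in C^\infty_\comp(\bbR_+)$ says that $u$ is orthogonal to the range of the map $f\mapsto f'+Af$, and the content of the lemma is that this range is dense.

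First I would rewrite the identity. Since $A$ is self-adjoint, $\jap{u,Af}=\jap{Au,f}$, and by the very definition of the distributional derivative, $\jap{u,f'}=-\jap{u',f}$ for every test function $f$ (no boundary terms arise, as $f$ is supported away from $0$ and $\infty$). Hence the hypothesis reads $\jap{Au-u',f}=0$ for all $f\in C^\infty_\comp(\bbR_+)$, which forces
\[
u'=Au \quad\text{as distributions on }\bbR_+.
\]
Because $A$ is bounded, $Au\in L^2$, so $u'\in L^2$ and therefore $u\in H^1(\bbR_+)$. Consequently $u$ has a continuous representative with a well-defined boundary value $u(0)$, and $u(t)\to0$ as $t\to\infty$ (an $H^1$ function on the half-line is uniformly continuous and square integrable, hence decays at infinity).

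Next I would pair the equation with $u$ in $L^2$. On one hand, $\jap{Au,u}=\jap{u',u}=\int_0^\infty u'\overline u\,\dd t$, and this quantity is real and nonnegative since $A=A^*\ge0$. On the other hand, $2\Re\int_0^\infty u'\overline u\,\dd t=\int_0^\infty(\abs u^2)'\,\dd t=-\abs{u(0)}^2$, using $u(t)\to0$ at infinity. Combining these, and using that $\int_0^\infty u'\overline u\,\dd t$ is real, I obtain
\[
\jap{Au,u}=-\tfrac12\abs{u(0)}^2 .
\]
The left-hand side is $\ge0$ while the right-hand side is $\le0$, so both vanish: $\jap{Au,u}=0$ and $u(0)=0$. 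Finally, $\jap{Au,u}=0$ with $A\ge0$ forces $Au=0$ (writing $A=A^{1/2}A^{1/2}$ gives $\norm{A^{1/2}u}=0$, hence $Au=0$). Then $u'=Au=0$, so $u$ is constant on $\bbR_+$; being in $L^2$, that constant is $0$, i.e. $u=0$.

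I expect the only genuinely delicate point to be the regularity and boundary bookkeeping: justifying that the distributional solution of $u'=Au$ actually lies in $H^1(\bbR_+)$, admits a legitimate trace $u(0)$, and vanishes at $+\infty$, so that the boundary term in the energy identity appears exactly as $-\abs{u(0)}^2$ with no contribution from infinity. Everything downstream of the identity $\jap{Au,u}=-\tfrac12\abs{u(0)}^2$ is then immediate.
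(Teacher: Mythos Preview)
Your proof is correct and follows essentially the same route as the paper's: rewrite the hypothesis as the distributional ODE $u'=Au$, use $u'\in L^2$ to get continuity and decay at infinity, then the energy identity $\jap{Au,u}=-\tfrac12\abs{u(0)}^2$ forces both sides to vanish, whence $Au=0$, $u'=0$, and $u=0$. The only cosmetic differences are that the paper spells out the boundary behaviour via \eqref{b1c} rather than invoking $H^1(\bbR_+)$, and you add the $A^{1/2}$ justification for $Au=0$.
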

\begin{proof}
Since $A$ is self-adjoint, we can rewrite our assumption as 
\[
\jap{u,f'}+\jap{Au,f}=0, \quad \forall f\in C^\infty_{\comp}(\bbR_+),
\]
which yields $u'=Au$, where $u'$ is the distributional derivative of $u$.
In particular, we find that $u'\in L^2$. By Cauchy-Schwarz, it follows that $u$ is H\"older continuous on $\bbR_+$ and the limit $u(0_+)$ exists. Furthermore, since 
\begin{equation}
\abs{u(t)}^2-\abs{u(0_+)}^2=\int_0^t (u'(s)\overline{u(s)}+u(s)\overline{u'(s)})\dd s
\label{b1c}
\end{equation}
and the integral in the right hand side here extended to $(0,\infty)$ converges absolutely, we see that the limit 
\[
\lim_{t\to\infty}\abs{u(t)}^2
\]
exists. Since $u\in L^2$, this limit equals zero, i.e. we have $u(t)\to0$ as $t\to\infty$. 

Multiplying $u'=Au$ by $\overline{u}$ and integrating, we find
\[
\int_0^\infty u'(t)\overline{u(t)}\dd t=\jap{Au,u}.
\]
Taking the real parts here gives
\[
\int_0^\infty (u'(t)\overline{u(t)}+u(t)\overline{u'(t)})\dd t=2\jap{Au,u}\geq0.
\]
By \eqref{b1c}, the left hand side equals $-\abs{u(0_+)}^2\leq0$, and so $\jap{Au,u}=0$. Since $A$ is positive semi-definite, this yields $Au=0$. Coming back to the equation $u'=Au$, we conclude that $u'=0$ and so $u=\mathrm{const}$. Finally, since $u\in L^2$, we find $u=0$. 
\end{proof}

Let us prove that $C_\comp^\infty(\bbR_+)$ is dense in $\Dom L$ in the graph norm of $L$ (since $C_\comp^\infty(\bbR_+)\subset \calD$, this suffices for our purpose). Our proof proceeds in two steps: we first prove this for finite measures $\mu$ and then extend the construction to general measures. The first step is based on an idea of Berg and Szwarc \cite[Section 1]{BS}. 

\begin{lemma}\label{lma.b2a}
Let $\mu$ be a finite measure on $\bbR_+$ with no point mass at the origin. Then $C_\comp^\infty(\bbR_+)$ is dense in $\Dom L$ with respect to the graph norm of $L$.
\end{lemma}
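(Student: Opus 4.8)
The plan is to use the Hilbert space structure of $\Dom L$ — complete because $L$ is closed by part (i) — and show that the orthogonal complement of $C^\infty_\comp(\bbR_+)$ in $\Dom L$, with respect to the graph inner product $\jap{u,f}+\jap{Lu,Lf}_\mu$, is trivial. The crucial preliminary observation, which is the continuous incarnation of the Berg--Szwarc trick, is that since $\mu$ is finite the measure $\dd\nu(x)=x\,\dd\mu(x)$ satisfies the Carleson condition \eqref{z3}: indeed $\nu((0,x))=\int_{(0,x)}y\,\dd\mu(y)\le x\,\mu(\bbR_+)$. Hence $\Gamma_\nu$ is a bounded, self-adjoint, positive semi-definite operator on $L^2$, that is, exactly an operator of the kind $A$ appearing in Lemma~\ref{lma.b2}.

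So suppose $u\in\Dom L$ satisfies $\jap{u,f}+\jap{Lu,Lf}_\mu=0$ for all $f\in C^\infty_\comp(\bbR_+)$; the goal is $u=0$. Writing $v:=\calL u\in L^2(\mu)$ and applying Fubini's theorem — legitimate because $f$ has compact support bounded away from the origin, so that $\int_0^\infty e^{-tx}\abs{v(x)}\,\dd\mu(x)$ is bounded uniformly for $t$ in $\supp f$ — one rewrites $\jap{Lu,Lf}_\mu=\jap{\calL_\mu v,f}$. The orthogonality relation then reads $\jap{u+\calL_\mu v,f}=0$ for all such $f$, whence $u=-\calL_\mu v$ almost everywhere.

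Next I would exploit this representation. The function $\calL_\mu v(t)=\int_0^\infty e^{-tx}v(x)\,\dd\mu(x)$ is smooth on $\bbR_+$, with differentiation under the integral sign justified by the finiteness of $\mu$; differentiating gives $u'(t)=\int_0^\infty xe^{-tx}v(x)\,\dd\mu(x)=(\calL_\nu v)(t)$. The elementary bound $\abs{\calL u(x)}\le\norm{u}_{L^2}(2x)^{-1/2}$ shows that $\abs{v(x)}^2x$ is bounded, so $v\in L^2(\nu)$ and, via the factorisation $\Gamma_\nu=\calL_\nu\calL$, we obtain $u'=\calL_\nu v=\Gamma_\nu u\in L^2$. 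Integrating by parts and using the self-adjointness of the bounded operator $\Gamma_\nu$, this is equivalent to $\jap{u,f'}+\jap{u,\Gamma_\nu f}=0$ for every $f\in C^\infty_\comp(\bbR_+)$. Lemma~\ref{lma.b2} with $A=\Gamma_\nu$ then yields $u=0$.

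The main obstacle — and the reason the finiteness of $\mu$ is essential — lies in controlling the behaviour at infinity, i.e.\ of $v$ near $x=0$. A priori $\calL_\mu v$ need not lie in $\Dom\Gamma_\mu$, so one cannot simply invoke the positivity of $\Gamma_\mu$ to conclude; it is precisely the passage to $\nu=x\mu$, whose extra factor $x$ simultaneously forces the Carleson condition and tames $v$ near the origin, that renders $\Gamma_\nu$ bounded and brings the problem within reach of Lemma~\ref{lma.b2}. Verifying $v\in L^2(\nu)$ and the differentiation-under-the-integral step are the two places where this finiteness is consumed, and extending beyond finite $\mu$ (via the splitting \eqref{b4a}) is what the general case will require.
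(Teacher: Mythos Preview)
Your proof is correct and follows essentially the same strategy as the paper: both exploit the Berg--Szwarc observation that finiteness of $\mu$ makes $\dd\nu=x\,\dd\mu$ Carleson, hence $\Gamma_\nu$ bounded, and then reduce to Lemma~\ref{lma.b2} with $A=\Gamma_\nu$. The only difference is tactical: the paper tests the graph-orthogonality directly against derivatives $g'$ (thereby proving the stronger statement that even $C^\infty_{\comp,0}(\bbR_+)$ is dense) and arrives at the weak form $\jap{u,g'}+\jap{u,\Gamma_\nu g}=0$ in one line via $\calL g'(x)=x\,\calL g(x)$, whereas you test against all $f$, deduce the explicit representation $u=-\calL_\mu v$, differentiate to obtain $u'=\Gamma_\nu u$ classically, and then pass back to the weak form. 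Your route extracts slightly more information (the formula for $u$) at the cost of the extra justifications you flag --- differentiation under the integral and $v\in L^2(\nu)$ --- but these are handled correctly and the endpoint is the same.
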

\begin{proof}
It suffices to prove that $C_{\comp,0}^\infty(\bbR_+)$ is dense. To obtain a contradiction, suppose there exists $u\in\Dom L$ such that 
\begin{equation}
\jap{u,g'}+\jap{L u,L g'}_\mu=0
\label{b1d}
\end{equation}
for all $g\in C_\comp^\infty(\bbR_+)$. Integrating by parts, we find that
\[
(\calL g')(x)=x(\calL g)(x). 
\]
Thus, we can rewrite \eqref{b1d} as 
\begin{equation}
\jap{u,g'}+\jap{\calL u,\calL g}_\nu=0,
\label{b1e}
\end{equation}
where $\dd\nu(x)=x\dd\mu(x)$. Observe that 
\begin{equation}
\int_0^a\dd\nu(x)=\int_0^a x\dd\mu(x)\leq a\int_0^a\dd\mu(x)\leq \mu(\bbR_+)a,
\label{b1f}
\end{equation}
and so $\nu$ satisfies the Carleson condition \eqref{z3}. Thus, the corresponding Hankel operator $\Gamma_\nu$ is bounded and we can rewrite \eqref{b1e} as 
\[
\jap{u,g'}+\jap{u,\Gamma_\nu g}=0
\]
for all $g\in C_\comp^\infty(\bbR_+)$. By Lemma~\ref{lma.b2} (with $A=\Gamma_\nu$), we find that $u=0$, which gives a desired contradiction. 
\end{proof}

\begin{lemma}\label{lma.b2b}
Let $\mu\in\calM$. Then $C_\comp^\infty(\bbR_+)$ is dense in $\Dom L$ with respect to the graph norm of $L$.
\end{lemma}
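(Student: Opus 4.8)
The plan is to mimic the orthogonality argument of Lemma~\ref{lma.b2a}, but to split the measure as in the ``Splitting the measure'' strategy so that only a finite piece is treated by the Berg--Szwarc trick. As in Lemma~\ref{lma.b2a}, it suffices to prove that $C^\infty_{\comp,0}(\bbR_+)$ is dense, i.e. that the only $u\in\Dom L$ satisfying $\jap{u,g'}+\jap{\calL u,\calL g'}_\mu=0$ for all $g\in C^\infty_\comp(\bbR_+)$ is $u=0$. Using $(\calL g')(x)=x(\calL g)(x)$ I would rewrite this as $\jap{u,g'}+\jap{\calL u,\calL g}_\nu=0$ with $\dd\nu(x)=x\,\dd\mu(x)$, and then decompose $\mu=\mu_1+\mu_2$ and $\nu=\nu_1+\nu_2$ with $\supp\mu_j$ as in \eqref{b4a}; recall $\mu_1$ is finite and $\mu_2$ is supported in $[1,\infty)$.

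For the finite piece $\nu_1=x\mu_1$ the computation \eqref{b1f} gives the Carleson condition, so $A_1:=\Gamma_{\nu_1}$ is bounded, self-adjoint and positive semidefinite, and $\jap{\calL u,\calL g}_{\nu_1}=\jap{A_1u,g}$ exactly as in Lemma~\ref{lma.b2a}. For the remaining piece I would exploit that $h_{\nu_2}(t)=\int_1^\infty e^{-tx}x\,\dd\mu_2(x)$ decays exponentially as $t\to\infty$ (because $x\geq1$ on $\supp\mu_2$). This lets me define $w_2(t)=\int_0^\infty h_{\nu_2}(t+s)u(s)\,\dd s$ pointwise, check by a Fubini argument (justified by the finiteness of the Laplace transform of $\mu_2$) that $\jap{\calL u,\calL g}_{\nu_2}=\jap{w_2,g}$ for all $g\in C^\infty_\comp(\bbR_+)$, and conclude that the orthogonality relation becomes the distributional identity $u'=A_1u+w_2$. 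The exponential decay guarantees $w_2\in L^2$ near infinity, with $\abs{w_2(t)}\leq C e^{-t/2}\norm{u}_{L^2}$ for $t\geq1$, and that $w_2$ is bounded on each interval $[a,\infty)$, $a>0$.

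I would then run the energy argument of Lemma~\ref{lma.b2}. From $u'=A_1u+w_2\in L^2(1,\infty)$ and $u\in L^2$ one gets, as there, that $u(t)\to0$ as $t\to\infty$. Integrating $\tfrac{\dd}{\dd t}\abs{u}^2=2\Re(u'\overline u)$ over $(a,\infty)$ gives
\[
-\abs{u(a)}^2=2\Re\int_a^\infty (A_1u)\overline u\,\dd t+2\Re\int_a^\infty w_2\overline u\,\dd t .
\]
The first integral tends to $\jap{A_1u,u}=\int_0^\infty\abs{\calL u}^2\dd\nu_1\geq0$ as $a\to0$, while the same Fubini computation rewrites the second as $\Re\int_1^\infty (\calL u)(x)\,\overline{\int_a^\infty e^{-tx}u\,\dd t}\,\dd\nu_2(x)$, whose limit should be $\int_0^\infty\abs{\calL u}^2\dd\nu_2\geq0$. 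Since the left-hand side is $\leq0$, this would force both $\nu_1$- and $\nu_2$-integrals of $\abs{\calL u}^2$ to vanish, hence $\calL u=0$ $\nu$-a.e. and therefore $\mu$-a.e.; then $A_1u=0$ and $w_2=0$, so $u'=0$, and $u\in L^2$ gives $u=0$.

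The main obstacle is the behaviour at the origin. Since $\Gamma_{\nu_2}$ is genuinely unbounded, $w_2$ may blow up as $t\to0_+$, so neither the existence of $u(0_+)$ nor the convergence of the truncated $\nu_2$-energy $\int_a^\infty w_2\overline u\,\dd t$ to $\int_0^\infty\abs{\calL u}^2\dd\nu_2$ (which could a priori equal $+\infty$) is automatic, and the sign/non-monotonicity of the truncated transform $\int_a^\infty e^{-tx}u\,\dd t$ must be controlled. I expect to resolve this either by a Fatou/monotone-limit argument after first truncating $\mu_2$ to $\mu_2|_{[1,R]}$ (for which the Hankel operator is bounded) and letting $R\to\infty$, or by recasting the claim as a direct approximation and using the commutation relation \eqref{b5} to replace $u$ by its shift $S_\tau u$, which is supported away from the origin and satisfies $\calL S_\tau u=e^{-\tau x}\calL u\in L^2(\nu)$, thereby removing the singularity of $w_2$ at $t=0$ before carrying out the energy estimate.
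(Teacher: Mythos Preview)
Your setup is correct up to the ODE $u'=A_1u+w_2$, and you have correctly identified the obstacle. But neither of your proposed fixes actually closes the gap, and the paper proceeds quite differently.

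The truncation $R\to\infty$ does not help: the equation $u'=A_1u+w_2$ is forced by the orthogonality hypothesis with respect to the \emph{full} measure $\mu$, so you cannot replace $w_2$ by $w_2^R$. If you split $w_2=w_2^R+w_2^{>R}$, the tail $w_2^{>R}$ inherits exactly the same possible blow-up at $t=0$ as $w_2$, and the limit $a\to0$ of $\int_a^\infty w_2^{>R}\bar u$ is just as uncontrolled. The shift idea is the right instinct, but it does not fit inside the orthogonality framework: replacing the orthogonal element $u$ by $S_\tau u$ destroys the hypothesis (there is no useful orthogonality relation for $S_\tau u$), so you cannot simply ``run the energy estimate on $S_\tau u$''. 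The underlying difficulty is that for general $\mu\in\calM$ one has no a~priori bound on $\int_1^\infty|\calL u|^2\,x\,\dd\mu$, and your identity $-|u(a)|^2=2I_1(a)+2I_2(a)$ is compatible with $|u(a)|\to\infty$ and $I_2(a)\to-\infty$ simultaneously.

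The paper sidesteps this entirely by abandoning the global orthogonality argument and doing a direct two-step approximation instead. First, for any $f\in\Dom L$ one shows $S_\tau f\to f$ in the graph norm of $L$ as $\tau\to0$, using the commutation relation $\calL S_\tau f=e^{-\tau x}\calL f$ and dominated convergence; this is your ``shift'' idea, applied to the element being approximated rather than to a hypothetical orthogonal $u$. Second, to approximate $S_\tau f$: apply Lemma~\ref{lma.b2a} to the \emph{finite} piece $\mu_1$ alone to get $f_n\in C^\infty_\comp(\bbR_+)$ with $\|f_n-f\|+\|\calL f_n-\calL f\|_{\mu_1}\to0$, and set $g_n=S_\tau f_n$, $g=S_\tau f$. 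The $\mu_1$-graph convergence of $g_n\to g$ follows from the commutation relation, while the $\mu_2$-part is handled by a direct Cauchy--Schwarz estimate using only that $g_n,g$ are supported on $[\tau,\infty)$:
\[
|\calL(g_n-g)(x)|\le \|g_n-g\|\,\frac{e^{-\tau x}}{\sqrt{2x}},
\qquad
\|\calL g_n-\calL g\|_{\mu_2}^2\le \tfrac12\|g_n-g\|^2\int_0^\infty e^{-2\tau x}\,\dd\mu(x)\to0.
\]
No energy argument is needed for $\mu_2$, and the problematic quantity $\int|\calL u|^2\,\dd\nu_2$ never appears.
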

\begin{proof}
\emph{Step 1: approximation by shifts.}
We use the commutation relation \eqref{b5} of the Laplace transform with the shift $S_\tau$. 
This relation shows that if $f\in\Dom L$ (i.e. if $\calL f\in L^2(\mu)$), then also $S_\tau f\in\Dom L$. Moreover, 
\[
\abs{(L S_\tau f)(x)-(L f)(x)}= \abs{1-e^{-\tau x}}\abs{(L f)(x)},
\]
and so by dominated convergence we have 
\[
\norm{L S_\tau f-L f}_\mu\to0, \quad \tau\to0_+.
\]
Of course, we also have $\norm{S_\tau f-f}\to0$ as $\tau\to0$. Thus, we have the convergence $S_\tau f\to f$ in the graph norm of $L$. 

\emph{Step 2: approximation of the shifted function.}
Let $f\in \Dom L$ and $\tau>0$.  By the previous step, it suffices to approximate $S_\tau f$ by elements of $C_\comp^\infty(\bbR_+)$ in the graph norm. Let us write $\mu=\mu_1+\mu_2$ as in \eqref{b4a}. By our assumption \eqref{b3}, the measure $\mu_1$ is finite. Thus, by Lemma~\ref{lma.b2a}, there exists a sequence $f_n\in C_\comp^\infty(\bbR_+)$ such that 
\begin{equation}
\norm{f_n-f}+\norm{\calL f_n-\calL f}_{\mu_1}\to0,
\label{b4}
\end{equation}
as $n\to\infty$. Denote  $g=S_\tau f$, $g_n=S_\tau f_n$; our aim is to prove that $g_n\to g$ in the graph norm of $L$. Since $g_n\in C_\comp^\infty(\bbR_+)$, this suffices for the proof. 

From \eqref{b4} using the commutation relation \eqref{b5} we find 
\[
\norm{g_n-g}+\norm{\calL g_n-\calL g}_{\mu_1}\to0,
\]
as $n\to\infty$.
It remains to prove that also 
\begin{equation}
\norm{\calL g_n-\calL g}_{\mu_2}\to0
\label{b7}
\end{equation}
as $n\to\infty$. As $g_n$ and $g$ are supported on $[\tau,\infty)$, we find 
\begin{align*}
\abs{(\calL g_n)({x})-(\calL g)({x})}
&\leq
\int_{\tau}^\infty e^{-{x} t}\abs{g_n(t)-g(t)}\dd t
\leq
\norm{g_n-g}\left\{\int_\tau^\infty e^{-2{x} t}\dd t\right\}^{1/2}
\\
&=
\norm{g_n-g}\frac{e^{-\tau{x}}}{\sqrt{2{x}}}.
\end{align*}
Integrating over $\mu_2$ gives 
\begin{align*}
\norm{\calL g_n-\calL g}_{\mu_2}^2
\leq
\norm{g_n-g}^2\int_1^\infty \frac{e^{-2\tau{x}}}{2{x}}\dd\mu_2({x})
\leq 
\frac12\norm{g_n-g}^2\int_0^\infty e^{-2\tau{x}}\dd\mu({x}),
\end{align*}
where the integral in the right hand side is finite by our assumption \eqref{b3}. 
Since $\norm{g_n-g}\to0$, we find that the right hand side converges to zero as $n\to\infty$. This proves \eqref{b7}. The proof of Lemma~\ref{lma.b2b} is complete. 
\end{proof}

\subsection{Proof of Theorem~\ref{thm.b1}}
\label{sec.b3}
\mbox{}

\textbf{Proof of (i):}
This is standard. For example, let us check that $L$ is closed. Suppose $f_n\to f$ in $L^2$ and $L f_n\to F$ in $L^2(\mu)$. From $f_n\to f$ we conclude that $\calL f_n\to \calL f$ pointwise (in fact, uniformly on any interval separated away from the origin). It follows that $F=\calL f$ and so $\calL f\in L^2(\mu)$; thus, $f\in\Dom L$, as required. 

\textbf{Proof of (ii), part $\overline{L^{\min}}=L$:} this follows directly from 
Lemma~\ref{lma.b2b}. The second identity $\overline{L_\mu^{\min}}=L_\mu$ will follow at the next step of the proof by a duality argument.

\textbf{Proof of (iii):}
Here we follow Yafaev's argument of \cite[Lemma 3.2]{Ya1} word for word. 
For $f\in L^2(\mu)$, we have $f\in\Dom (L^{\min})^*$ with $(L^{\min})^*f=F\in L^2$ if and only if 
\[
\jap{f, L^{\min}g}_\mu=\jap{F,g},\quad \forall g\in\calD.
\]
Interchanging the order of integration in the left hand side here by Fubini, 
we rewrite the last identity as 
\[
\int_0^\infty (\calL_\mu f)(t)\overline{g(t)}\dd t
=
\int_0^\infty F(t)\overline{g(t)}\dd t, \quad \forall g\in\calD.
\]
The last relation holds true if and only if $\calL_\mu f=F$, i.e. if and only if $f\in\Dom {L_\mu}$ with ${L_\mu} f=F$. 
This proves 
\begin{equation}
(L^{\min})^*=L_\mu.
\label{b8}
\end{equation}
Observe that for any closable operator $A$ we have $A^*=(\overline{A})^*$. Thus, using that 
$\overline{L^{\min}}=L$, from here we obtain $L^*=L_\mu$. By taking adjoints, we also get $L_\mu^*=L$. 

\textbf{Proof of (ii), part $\overline{L_\mu^{\min}}=L_\mu$:}
By the same argument as \eqref{b8} in the previous step, we find that
\[
(L_\mu^{\min})^*=L.
\]
From here we find
\[
\overline{L_\mu^{\min}}=(L_\mu^{\min})^{**}=L^*=L_\mu,
\]
as required. 

\textbf{Proof of (iv):}
We first note that the Laplace transform $\calL_\mu f$ extends to the right half-plane $\Re t>0$ as an analytic function. Thus, if $\calL_\mu f(t)=0$ for all $t>0$, then it vanishes identically in the right half-plane. It is a well-known classical fact (see e.g. \cite[Chapter II, Theorem 6.3]{Widder}) that this implies that $f=0$; hence the kernel of ${L_\mu}$ is trivial. 

Suppose $L f=0$ for some non-zero $f$; then $\calL f(x)=0$ for $\mu$-a.e. $x>0$. Observe that $\calL f(x)$ is real-analytic in $x>0$ and belongs to the Hardy space in the right half-plane $\Re x>0$. Thus, if $\calL f$ vanishes on $\supp \mu$ for some non-zero $f$, then the support of $\mu$ is the zero set of a Hardy class function, and so it satisfies the Blaschke condition (see e.g. \cite[Section VI]{Koosis}). In this case, the kernel corresponds to the Beurling space of functions in the Hardy class vanishing on $\supp\mu$; this Beurling space is infinite-dimensional. 
The proof of Theorem~\ref{thm.b1} is now complete.
\qed

\subsection{Proof of Theorem~\ref{thm.z1}}
By the definition of the form $Q_\mu$, we have
\begin{equation}
Q_\mu[f]=\norm{Lf}_\mu^2, \quad f\in C^\infty_\comp(\bbR_+).
\label{b9}
\end{equation}
By Theorem~\ref{thm.b1}, the form in the right hand side is closable and the domain of the closure is $\Dom L$. This proves Theorem~\ref{thm.z1}. \qed

\subsection{Proof of Theorem~\ref{prp.z1a}}
By \eqref{b9} we have $\Ker\Gamma_\mu=\Ker L$. Now the required statement follows from Theorem~\ref{thm.b1}(iv). \qed

\section{Proof of Theorem~\ref{thm:b1}}
\label{sec.f}

After an initial step in Lemma~\ref{lma.f1}, we prove Theorem~\ref{thm:b1}(ii); this is the core of the proof. In the last subsection, we prove Theorem~\ref{thm:b1}(i). 

\subsection{Initial step in the proof}
It is convenient to formulate an initial step in the proof of Theorem~\ref{thm:b1} as a lemma.

\begin{lemma}\label{lma.f1}
Assume the hypothesis of Theorem~\ref{thm:b1}. Then $C^\infty_{\comp}(\bbR_+)\subset\Dom\Gamma_\mu$ and for $f\in C^\infty_{\comp}(\bbR_+)$, we have 
\begin{equation}
(\Gamma_\mu f)(t)=\int_0^\infty h_\mu(t+s)f(s)\dd s, \quad t>0.
\label{a00}
\end{equation}
\end{lemma}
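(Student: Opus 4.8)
The plan is to use the variational characterisation of $\Gamma_\mu$. Recall that $\Gamma_\mu$ is the self-adjoint operator associated with the closed quadratic form $Q_\mu[f]=\norm{Lf}_\mu^2$, whose domain is $\Dom L$ (this is Theorem~\ref{thm.b1} together with \eqref{b9}). By the first representation theorem, $f\in\Dom\Gamma_\mu$ with $\Gamma_\mu f=w$ if and only if $f\in\Dom L$, $w\in L^2$, and the sesquilinear form $Q_\mu(f,g)=\jap{\calL f,\calL g}_\mu$ equals $\jap{w,g}$ for all $g\in\Dom L$. Since $C^\infty_\comp(\bbR_+)\subset\Dom L$, membership in the form domain is automatic. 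So I would fix $f\in C^\infty_\comp(\bbR_+)$, let $F$ be the integral \eqref{a00}, and show that (a) $F\in L^2$, and (b) $Q_\mu(f,g)=\jap{F,g}$ for all $g$ in a form-core, extending afterwards to all of $\Dom L$ by density. The representation theorem then gives $f\in\Dom\Gamma_\mu$ and $\Gamma_\mu f=F$.

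For step (a), suppose $\supp f\subset[a,b]$ with $0<a<b$, so that $F(t)=\int_a^b h_\mu(t+s)f(s)\dd s$. Minkowski's integral inequality gives
\[
\norm{F}_{L^2}\leq \int_a^b \abs{f(s)}\left(\int_0^\infty\abs{h_\mu(t+s)}^2\dd t\right)^{1/2}\dd s,
\]
and the inner integral equals $\int_s^\infty\abs{h_\mu(u)}^2\dd u\leq\int_a^\infty\abs{h_\mu(u)}^2\dd u$, which is finite precisely by the hypothesis \eqref{eq:f1}. Hence $F\in L^2$. This is the only place where the square-integrability assumption enters.

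For step (b), I would take $g\in C^\infty_\comp(\bbR_+)$, substitute the representation $h_\mu(t+s)=\int_0^\infty e^{-(t+s)x}\dd\mu(x)$ into $\jap{F,g}$, and interchange the order of integration. Because $f$ and $g$ are bounded and compactly supported away from the origin and $\int_0^\infty e^{-cx}\dd\mu(x)<\infty$ for every $c>0$, the resulting triple integral converges absolutely, so Fubini applies and yields
\[
\jap{F,g}=\int_0^\infty (\calL f)(x)\overline{(\calL g)(x)}\dd\mu(x)=Q_\mu(f,g).
\]
Since $C^\infty_\comp(\bbR_+)$ is dense in $\Dom L$ in the form norm by Theorem~\ref{thm.z1}, and the form norm dominates the $L^2$ norm, both sides of $Q_\mu(f,g)=\jap{F,g}$ are continuous in $g$ with respect to that norm; approximating an arbitrary $g\in\Dom L$ by elements of $C^\infty_\comp(\bbR_+)$ extends the identity to all of $\Dom L$, completing the argument.

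The computation in step (b) is just the operator-theoretic identity $\Gamma_\mu=L^*L=L_\mu L$ (using $L^*=L_\mu$ from Theorem~\ref{thm.b1}(iii)) unwound at the level of integrals: Fubini shows $L_\mu(\calL f)=F$ pointwise, so $Lf\in\Dom L_\mu$ exactly when $F\in L^2$. I expect the only real obstacle to be bookkeeping — justifying the interchange of integration in step (b) and the density extension to the full form domain, both routine — while the essential analytic input is the elementary estimate in step (a).
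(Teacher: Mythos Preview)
Your proposal is correct and essentially matches the paper's argument. The paper also uses $\Gamma_\mu=L^*L$ and verifies directly that $Lf\in\Dom L^*=\Dom L_\mu$ for $f\in C^\infty_\comp(\bbR_+)$; the only cosmetic difference is that, to show $F=\calL_\mu Lf\in L^2$, the paper uses the pointwise bound $\abs{\calL f(x)}\leq Ce^{-ax}$ (for $\supp f\subset[a,\infty)$) to get $\abs{F(t)}\leq Ch_\mu(t+a)$, whereas you use Minkowski's integral inequality --- both routes land on the hypothesis \eqref{eq:f1} in the same way.
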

We recall that $\Gamma_\mu$ is defined via the quadratic form $Q_\mu$ and so \eqref{a00} is not self-evident. 
\begin{proof}
From \eqref{b9} we find
\[
\Gamma_\mu=L^*L,
\quad
\Dom\Gamma_\mu=\{f\in\Dom L: L f\in\Dom L^*\}.
\]
Let $f\in C^\infty_\comp(\bbR_+)$. We need to check that $Lf\in\Dom L^*$, i.e. $\calL_\mu Lf\in L^2$. By assumption, $f$ is compactly supported in $\bbR_+$; let $a>0$ be such that $\supp f\subset[a,\infty)$. Then we have 
\begin{equation}
\abs{\calL f(x)}\leq \int_{a}^\infty e^{-tx}\abs{f(t)}\dd t\leq Ce^{-ax}, \quad t>0.
\label{a0a}
\end{equation}
It follows that 
\begin{align*}
\abs{(\calL_\mu Lf)(t)}
&\leq C\int_0^\infty e^{-tx}e^{-ax}\dd\mu(x)=Ch_\mu(t+a),
\end{align*}
and the right hand side is in $L^2$ by our assumption \eqref{eq:f1}. 
We conclude that $\calL_\mu Lf\in L^2$. 
Applying Fubini, we find 
\begin{align*}
(\Gamma_\mu f)(t)&=\calL_\mu Lf(t)
=\int_0^\infty e^{-tx}\left\{\int_0^\infty e^{-ts}f(s)\dd s\right\}\dd\mu(x)
=\int_0^\infty h_\mu(t+s)f(s)\dd s,
\end{align*}
which proves \eqref{a00}. 
\end{proof}

We next prove part (ii) of the theorem: $C^\infty_{\comp}(\bbR_+)$ is dense in $\Dom\Gamma_\mu$ in the graph norm of $\Gamma_\mu$. 
An important initial step is to consider the case of finite measures $\mu$. 

\subsection{The case of finite measures $\mu$}\label{sec.f2}
\begin{lemma}\label{thm.a1}
Let $\mu\in\calM$ be a \underline{finite} measure such that $h_\mu$ is square integrable at infinity \eqref{eq:f1}. Then the restriction $\Gamma_\mu|_{C^\infty_{\comp}(\bbR_+)}$ is essentially self-adjoint. 
\end{lemma}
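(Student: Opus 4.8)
The plan is to show that $C^\infty_\comp(\bbR_+)$ is a core for the self-adjoint operator $\Gamma_\mu$. Since $\Gamma_\mu\geq0$, the operator $\Gamma_\mu+I$ is a bijection of $\Dom\Gamma_\mu$ onto $L^2$ with bounded inverse, and its norm is comparable to the graph norm of $\Gamma_\mu$; consequently a subspace $D\subseteq\Dom\Gamma_\mu$ is a core for $\Gamma_\mu$ if and only if $(\Gamma_\mu+I)D$ is dense in $L^2$. By Lemma~\ref{lma.f1} we already know $C^\infty_\comp(\bbR_+)\subseteq\Dom\Gamma_\mu$, with $\Gamma_\mu$ acting there by the integral \eqref{eq:f2}. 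Thus the goal reduces to proving that any $g\in L^2$ satisfying
\[
\jap{(\Gamma_\mu+I)\psi,g}=0\qquad\text{for all }\psi\in C^\infty_\comp(\bbR_+)
\]
must vanish.

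The key idea is to exploit the finiteness of $\mu$ by testing only against functions $\psi=f'$ with $f\in C^\infty_\comp(\bbR_+)$ (note $f'\in C^\infty_\comp(\bbR_+)$), and integrating by parts so as to replace the unbounded $\Gamma_\mu$ by the \emph{bounded} operator $\Gamma_\nu$. Set $\dd\nu(x)=x\,\dd\mu(x)$. Because $\mu$ is finite, the estimate \eqref{b1f} shows that $\nu$ satisfies the Carleson condition \eqref{z3}, so $\Gamma_\nu$ is bounded, self-adjoint and positive semi-definite. The integration by parts \eqref{z17} (with $f$ in place of $g$), whose boundary terms vanish since $f$ is compactly supported in $\bbR_+$, gives the identity $\Gamma_\mu f'=\Gamma_\nu f$ in $L^2$, the left-hand side being furnished by Lemma~\ref{lma.f1}. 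Substituting $\psi=f'$ into the orthogonality relation and using this identity yields $\jap{f',g}+\jap{\Gamma_\nu f,g}=0$ for all $f\in C^\infty_\comp(\bbR_+)$, and taking complex conjugates gives
\[
\jap{g,f'}+\jap{g,\Gamma_\nu f}=0\qquad\text{for all }f\in C^\infty_\comp(\bbR_+).
\]
This is exactly the hypothesis of Lemma~\ref{lma.b2} with $u=g$ and $A=\Gamma_\nu$, so that lemma yields $g=0$, whence $(\Gamma_\mu+I)C^\infty_\comp(\bbR_+)$ is dense and the restriction is essentially self-adjoint.

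The main obstacle is precisely the unboundedness of $\Gamma_\mu$: one cannot manipulate $\jap{\Gamma_\mu\psi,g}$ directly (for instance $\Gamma_\mu g$ need not even be defined), so a naive attempt to integrate by parts twice or to place $g$ in $\Dom L$ runs into domain difficulties near the origin. The whole point of restricting to $\psi=f'$ is to trade $\Gamma_\mu$ for $\Gamma_\nu$, which is bounded exactly because $\nu=x\mu$ is Carleson when $\mu$ is finite — this is why the finiteness hypothesis is imposed at this stage. All the genuine analytic work is then absorbed into Lemma~\ref{lma.b2}, whose elementary energy argument (showing $u'=\Gamma_\nu u$ forces $u=0$) does the heavy lifting. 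The only remaining points needing care are the rigorous justification of the integration by parts \eqref{z17} for $f\in C^\infty_\comp(\bbR_+)$ and the fact, supplied by Lemma~\ref{lma.f1}, that $\Gamma_\mu f'$ is genuinely given by the absolutely convergent integral \eqref{eq:f2} so that the identity $\Gamma_\mu f'=\Gamma_\nu f$ holds as an equality in $L^2$.
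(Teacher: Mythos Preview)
Your proof is correct and follows essentially the same route as the paper: pass from $\Gamma_\mu$ to the bounded operator $\Gamma_\nu$ (with $\dd\nu=x\,\dd\mu$, Carleson because $\mu$ is finite) via the integration-by-parts identity $\Gamma_\mu f'=\Gamma_\nu f$, and then invoke Lemma~\ref{lma.b2} to kill the obstruction. The only cosmetic difference is in the framing: the paper works with the symmetric restriction $\Gamma_\mu|_{C^\infty_{\comp,0}(\bbR_+)}$, verifies its positivity directly, and checks that one deficiency index vanishes, whereas you use the already-established self-adjointness of $\Gamma_\mu$ (from the quadratic form) and the equivalent ``range of $\Gamma_\mu+I$ is dense'' criterion; both reduce to exactly the same orthogonality relation $\jap{u,f'}+\jap{u,\Gamma_\nu f}=0$.
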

\begin{proof}
In fact, we will prove that the restriction 
\[
\Gamma_\mu^{\min}:=\Gamma_\mu|_{C^\infty_{\comp,0}(\bbR_+)}
\]
is essentially self-adjoint. Obviously, $C^\infty_{\comp,0}(\bbR_+)\subset C^\infty_{\comp}(\bbR_+)$, so this will suffice for our purposes. 

Let $\dd\nu(x)=x\dd\mu(x)$; as in the proof of Lemma~\ref{lma.b2a} (see \eqref{b1f}), we observe that $\nu$ is a Carleson measure and therefore the Hankel operator $\Gamma_\nu$ is bounded. 
Any function in $f\in C^\infty_{\comp,0}(\bbR_+)$ can be written as $f=g'$ with $g\in C^\infty_{\comp}(\bbR_+)$. Using \eqref{a00} and integrating by parts as in \eqref{z17}, we find
\begin{equation}
\Gamma_\mu^{\min}g'=\Gamma_\nu g, \quad g\in C^\infty_\comp(\bbR_+).
\label{z18}
\end{equation}
Thus, we have
\begin{align*}
\jap{\Gamma_\mu^{\min}g',g'}
=
\jap{\Gamma_\nu g,g'}
=
\int_0^\infty (\calL g)(x)\overline{(\calL g')(x)}\dd\nu(x)
=
\int_0^\infty \abs{(\calL g)(x)}^2 x\dd\nu(x)\geq0;
\end{align*}
observe that for $g\in C^\infty_\comp(\bbR_+)$ the Laplace transform $(\calL g)(x)$ is bounded and decays exponentially as $x\to\infty$ and so the integrals above converge absolutely. 
From the above inequality it follows that $\Gamma_\mu^{\min}$ is symmetric and positive semi-definite. Thus, the deficiency indices of $\Gamma_\mu^{\min}$ in the upper and lower half-planes coincide and it suffices to prove that 
\[
\Ker ((\Gamma_\mu^{\min})^*+I)=\{0\}.
\]
Suppose $u\in \Ker ((\Gamma_\mu^{\min})^*+I)$; this means that $u\perp (\Gamma_\mu^{\min}g'+g')$ for any $g\in C^\infty_\comp(\bbR_+)$. By \eqref{z18}, this means
\[
u\perp (\Gamma_\nu g+g'), \quad g\in C^\infty_\comp(\bbR_+).
\]
By Lemma~\ref{lma.b2} with $A=\Gamma_\nu$, it follows that $u=0$, as required. 
\end{proof}

\subsection{Proof of Theorem~\ref{thm:b1}(ii)}\label{sec.f3}
\mbox{}

\emph{Step 1: approximation by shifts.}
We would like to justify and use the commutation relation \eqref{z15} of $\Gamma_\mu=L^*L$ with the shift operator. As in the proof of Lemma~\ref{lma.b2b}, we first note that $S_\tau(\Dom L)\subset\Dom L$ and for $f\in\Dom L$, 
\[
(LS_\tau f)(x)=e^{-\tau x}(Lf)(x), \quad x>0.
\]
Furthermore, for $g\in\Dom L^*$ we have 
\[
L^*\{e^{-\tau x}g(x)\}(t)=(L^*g)(t+\tau)=(S_\tau^*L^*g)(t).
\]
We conclude that 
\[
S_\tau(\Dom \Gamma_\mu)\subset\Dom\Gamma_\mu
\]
and 
\begin{equation}
\Gamma_\mu S_\tau=S_\tau^*\Gamma_\mu
\label{eq:f3}
\end{equation}
on $\Dom\Gamma_\mu$. 

Now suppose $f\in\Dom\Gamma_\mu$; denote $F=\Gamma_\mu f=L^*L f\in L^2$. Clearly, we have 
\[
\norm{S_\tau f-f}+\norm{S_\tau^*F-F}\to0, \quad \tau\to0.
\]
By \eqref{eq:f3}, this rewrites as
\[
\norm{S_\tau f-f}+\norm{\Gamma_\mu(S_\tau f-f)}\to0, \quad \tau\to0.
\]
In other words, $S_\tau f\to f$ in the graph norm of $\Gamma_\mu$. 

\emph{Step 2: splitting the measure.}
We write $\mu=\mu_1+\mu_2$ as in \eqref{b4a}. Along with $\Gamma_\mu$, we consider  the Hankel operators $\Gamma_{\mu_1}$ and $\Gamma_{\mu_2}$.  
We would like to write
\begin{equation}
\Gamma_\mu=\Gamma_{\mu_1}+\Gamma_{\mu_2},
\label{eq:f4}
\end{equation}
but we need to be careful about the domains here. Let us first prove that 
\[
\Dom\Gamma_\mu\subset\Dom\Gamma_{\mu_1}
\quad\text{ and }\quad
\Dom\Gamma_\mu\subset\Dom\Gamma_{\mu_2}.
\]
Let $f\in\Dom\Gamma_\mu$ and denote $F=\Gamma_\mu f=L_\mu Lf\in L^2$. We can write
\[
F(t)=F_1(t)+F_2(t), 
\]
with 
\[
F_1(t)=(\calL_{\mu_1}\calL f)(t)
\quad\text{ and }\quad
F_2(t)=(\calL_{\mu_2}\calL f)(t).
\]
Clearly, both $F_1$ and $F_2$ are in $C^\infty(\bbR_+)$. 
We need to check that both $F_1$ and $F_2$ are in $L^2$. Informally, we must check that there are no significant cancellations in the sum $F=F_1+F_2$; we will do this by checking that $F_1$ is regular near $t=0$ and  $F_2$ decays exponentially fast as $t\to\infty$. 

Since $f\in\Dom\Gamma_\mu$, we have, in particular,
\[
\int_0^\infty \abs{\calL f(x)}^2\dd\mu(x)<\infty.
\]
Using this, we find that
\[
\abs{F_1(t)}
=\Abs{\int_0^1 e^{-tx}(\calL f)(x)\dd\mu(x)}
\leq \int_0^1 \abs{(\calL f)(x)}\dd\mu(x)
\leq C, 
\]
and so $F_1$ is uniformly bounded on $\bbR_+$. Since 
\[
F=F_1+F_2\in L^2(\bbR_+)\subset L^2(0,1),
\] 
we find that both $F_1$ and $F_2$ are in $L^2(0,1)$.

Next, for $t\geq1$ we have 
\begin{align}
\abs{F_2(t)}
&=\Abs{\int_1^\infty e^{-tx}(\calL f)(x)\dd\mu(x)}
\leq e^{-t/2}\int_1^\infty e^{-x/2}\abs{(\calL f)(x)}\dd\mu(x)
\notag
\\
&\leq 
e^{-t/2}
\left\{\int_1^\infty e^{-x}\dd\mu(x)\int_1^\infty \abs{(\calL f)(x)}^2\dd\mu(x)\right\}^{1/2}
=
Ce^{-t/2},
\label{eq:x1}
\end{align}
and so $F_2$ decays exponentially as $t\to\infty$. Since $F=F_1+F_2\in L^2(1,\infty)$, we find that both $F_1$ and $F_2$ are in $L^2(1,\infty)$. 

We conclude that both $F_1$ and $F_2$ are in $L^2(\bbR_+)$, as required. In particular, the relation \eqref{eq:f4} is valid on $\Dom\Gamma_\mu$.

\emph{Step 3: approximation of the shifted function.}
We have 
\[
h_\mu=h_{\mu_1}+h_{\mu_2}.
\]
Since
\begin{align*}
h_{\mu_2}(t) = \int_1^\infty e^{-tx} \dd\mu(x) \le e^{-t/2} \int_1^\infty e^{-x/2} \dd\mu(x) = 
e^{-t/2} \calL\mu (1/2)
\end{align*}
for $t\geq1$, we see that the kernel function $h_{\mu_2}$ decays exponentially at infinity, and by 
our hypothesis, $h_\mu$ is square-integrable at infinity. It follows that $h_{\mu_1}$ is 
square-integrable at infinity. 

Let $f\in\Dom\Gamma_\mu$ and $\tau>0$. By Step 1 of the proof, it suffices to approximate $S_\tau f$ by functions in $C^\infty_{\comp}(\bbR_+)$ in the graph norm of $\Gamma_\mu$. By Step~2 of the proof, $f\in\Dom\Gamma_{\mu_1}$. By Lemma~\ref{thm.a1} applied to $\mu_1$, the restriction $\Gamma_{\mu_1}|_{C^\infty_{\comp}(\bbR_+)}$ is essentially self-adjoint (here we use that $h_{\mu_1}$ is square-integrable at infinity). Thus, there exists a sequence $f_n\in C^\infty_{\comp}(\bbR_+)$ such that 
\begin{equation}
\norm{f_n-f}+\norm{\Gamma_{\mu_1}(f_n-f)}\to0, \quad n\to\infty.
\label{eq:f6}
\end{equation}
Denote $g=S_\tau f$, $g_n=S_\tau f_n$. By Step~2 of the proof, we have $g\in\Dom\Gamma_{\mu_1}\cap\Dom\Gamma_{\mu_2}$ and we can write 
\[
\Gamma_\mu g=\Gamma_{\mu_1}g+\Gamma_{\mu_2}g.
\]
Of course, we can also write this for $g_n$ in place of $g$ because $g_n\in C^\infty_{\comp}(\bbR_+)$ (here we use Lemma~\ref{lma.f1}). 
Thus, we have
\begin{equation}
\norm{g_n-g}+\norm{\Gamma_\mu(g_n-g)}
\leq
\norm{g_n-g}+\norm{\Gamma_{\mu_1}(g_n-g)}+\norm{\Gamma_{\mu_2}(g_n-g)},
\label{eq:f7}
\end{equation}
and our aim is to prove that the right hand side goes to zero as $n\to\infty$.

Using the commutation relation \eqref{eq:f3}, we find
\begin{align*}
\norm{g_n-g}&+\norm{\Gamma_{\mu_1}(g_n-g)}
=
\norm{S_\tau(f_n-f)}+\norm{\Gamma_{\mu_1}S_\tau(f_n-f)}
\\
&=\norm{S_\tau(f_n-f)}+\norm{S_\tau^*\Gamma_{\mu_1}(f_n-f)}
\leq 
\norm{f_n-f}+\norm{\Gamma_{\mu_1}(f_n-f)}\to0
\end{align*}
as $n\to\infty$ by \eqref{eq:f6}. Returning to \eqref{eq:f7}, we see that it remains to prove that 
\begin{equation}
\norm{\Gamma_{\mu_2}(g_n-g)}\to0, \quad n\to\infty.
\label{eq:f8}
\end{equation}
As $g_n$ and $g$ are supported on $[\tau,\infty)$, we find, as in the proof of Lemma~\ref{lma.b2b},
\[
\abs{(\calL g_n)({x})-(\calL g)({x})}
\leq 
\norm{g_n-g}\frac{e^{-\tau{x}}}{\sqrt{2{x}}}
\]
and so 
\begin{align*}
\abs{\Gamma_{\mu_2}(g_n-g)(t)}
&\leq 
\norm{g_n-g}\int_1^\infty \frac{e^{-\tau{x}}}{\sqrt{2{x}}}e^{-t{x}}\dd\mu({x})
\\
&\leq 
e^{-t}
\norm{g_n-g}\int_1^\infty e^{-\tau{x}}\dd\mu({x})
=
C(\tau)\norm{g_n-g}e^{-t}.
\end{align*}
Thus we find that 
\[
\norm{\Gamma_{\mu_2}(g_n-g)}
\leq 
C(\tau)\norm{g_n-g}\left\{\int_0^\infty e^{-2t}\dd t\right\}^{1/2}
=
\frac{C(\tau)}{\sqrt{2}}\norm{g_n-g}
\to0
\]
as $n\to\infty$. This yields \eqref{eq:f8}.
The proof of Theorem~\ref{thm:b2}(ii) is complete. 
\qed

\subsection{Proof of Theorem~\ref{thm:b1}(i)}
First we make a calculation. Let $f\in L^2$ and let $F$ be defined by the integral \eqref{eq:f2}. Then for any $g\in C^\infty_{\comp}(\bbR_+)$, applying Fubini at the first step and Lemma~\ref{lma.f1} at the second step, we find 
\begin{align}
\int_0^\infty F(t)\overline{g(t)}\dd t&=\int_0^\infty \left\{\int_0^\infty h_\mu(t+s)f(s)\dd s\right\}\overline{g(t)}\dd t
\notag
\\
&=\int_0^\infty f(s)\left\{\int_0^\infty h_\mu(t+s)\overline{g(t)}\dd t\right\}\dd s
=\jap{f,\Gamma_\mu g}.
\label{eq:f14}
\end{align}
Next, we have the following chain of equivalences. 
\begin{itemize}
\item
The inclusion $F\in L^2$ is equivalent to 
\[
\Abs{\int_0^\infty F(t)\overline{g(t)}\dd t}\leq C\norm{g}, \quad \forall g\in C^\infty_\comp(\bbR_+).
\]
\item
By \eqref{eq:f14}, the latter estimate is equivalent to 
\[
\abs{\jap{f,\Gamma_\mu g}}\leq C\norm{g}, \quad \forall g\in C^\infty_\comp(\bbR_+).
\]
\item
Since $C^\infty_\comp(\bbR_+)$ is dense in $\Dom\Gamma_\mu$ in the graph norm of $\Gamma_\mu$, the last estimate is equivalent to 
\[
\abs{\jap{f,\Gamma_\mu g}}\leq C\norm{g}, \quad \forall g\in\Dom\Gamma_\mu.
\]
\item
Since $\Gamma_\mu$ is self-adjoint, the last estimate is equivalent to $f\in\Dom\Gamma_\mu$. 
\end{itemize}
We have proved that $\Dom\Gamma_\mu=\{f\in L^2: F\in L^2\}$. 

Finally, if $f\in\Dom\Gamma_\mu$, then by \eqref{eq:f14} we find 
\[
\jap{F,g}=\jap{f,\Gamma_\mu g}=\jap{\Gamma_\mu f,g},
\]
and so $\Gamma_\mu f=F$. The proof of Theorem~\ref{thm:b1}(i) is complete. \qed

\section{Proof of Theorem~\ref{thm:b2}}
\label{sec.g}

\subsection{Initial step in the proof of Theorem~\ref{thm:b2}}
Below is the analogue of Lemma~\ref{lma.f1} in the context of the proof of Theorem~\ref{thm:b2}. We recall that $\dd\nu(x)=x\dd\mu(x)$. We also recall that any function $f\in C^\infty_{\comp,0}(\bbR_+)$ can be written as $f=g'$ with $g\in C^\infty_{\comp}(\bbR_+)$.
\begin{lemma}\label{lma.f2}
Assume the hypothesis of Theorem~\ref{thm:b2}. 
Then $C^\infty_{\comp,0}(\bbR_+)\subset\Dom\Gamma_\mu$ and the action of $\Gamma_\mu$ on $C^\infty_{\comp,0}(\bbR_+)$ is given by 
\begin{equation}
\Gamma_\mu g'=\Gamma_\nu g, \quad g\in C^\infty_{\comp}(\bbR_+).
\label{eq:f10}
\end{equation}
\end{lemma}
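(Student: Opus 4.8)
The plan is to exploit the factorisation $\Gamma_\mu=L^*L=L_\mu L$ recorded in the proof of Lemma~\ref{lma.f1}, combined with the measure substitution $\dd\nu(x)=x\dd\mu(x)$, so as to reduce the whole statement to Lemma~\ref{lma.f1} applied to $\nu$ in place of $\mu$. This reduction is legitimate: as noted around \eqref{z16}, $\nu\in\calM$ and $h_\nu$ is square integrable at infinity, so $\nu$ satisfies the hypothesis of Theorem~\ref{thm:b1}, and hence of Lemma~\ref{lma.f1}. Recall also that $L^*=L_\mu$ by Theorem~\ref{thm.b1}(iii), and that $\Dom\Gamma_\mu=\{f\in\Dom L: Lf\in\Dom L^*\}$.

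First I would fix $g\in C^\infty_{\comp}(\bbR_+)$ and check that $g'\in\Dom L$. Integrating by parts, with boundary terms vanishing because $\supp g$ is a compact subset of $(0,\infty)$, gives $\calL g'(x)=x\,\calL g(x)$. To see that this lies in $L^2(\mu)$, I would split at $x=1$: near the origin $x^2\abs{\calL g(x)}^2$ is bounded, and $\mu$ is finite on $[0,1]$ (indeed $\mu([0,1])\leq e\int_0^\infty e^{-x}\dd\mu(x)<\infty$ by \eqref{b3}), while for large $x$ the bound $\abs{\calL g(x)}\leq Ce^{-ax}$ from \eqref{a0a} makes $x^2\abs{\calL g(x)}^2$ integrable against $\mu$, again by \eqref{b3}. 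Thus $g'\in\Dom L$ with $Lg'=x\,\calL g$.

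Next I would identify the iterated Laplace transform. Using $\dd\nu(x)=x\dd\mu(x)$,
\[
(\calL_\mu Lg')(t)=\int_0^\infty e^{-tx}\,x\,\calL g(x)\dd\mu(x)=\int_0^\infty e^{-tx}\,\calL g(x)\dd\nu(x)=(\calL_\nu\calL g)(t).
\]
By Lemma~\ref{lma.f1} applied to $\nu$, we have $C^\infty_{\comp}(\bbR_+)\subset\Dom\Gamma_\nu$ and $\Gamma_\nu g=\calL_\nu\calL g\in L^2$. Hence $\calL_\mu Lg'\in L^2$, i.e. $Lg'\in\Dom L_\mu=\Dom L^*$, so that $g'\in\Dom\Gamma_\mu$. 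Finally,
\[
\Gamma_\mu g'=L^*Lg'=\calL_\mu Lg'=\calL_\nu\calL g=\Gamma_\nu g,
\]
which is exactly \eqref{eq:f10}.

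I do not expect a serious obstacle, since this is the precise analogue of Lemma~\ref{lma.f1}; it is an initial bookkeeping step. The only point requiring genuine care is the verification that the iterated transform $\calL_\mu(\calL g')$ actually lands in $L^2$, rather than merely being a smooth function. The clean device for this is the substitution $x\dd\mu=\dd\nu$, which converts the claim into the already-established fact that $\calL_\nu\calL g=\Gamma_\nu g\in L^2$ — a statement available to us precisely because $h_\nu$ is square integrable at infinity. Everything else is integration by parts and Fubini.
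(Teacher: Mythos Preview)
Your proof is correct and follows the paper's approach: same factorisation $\Gamma_\mu=L^*L$, same integration by parts $(\calL g')(x)=x(\calL g)(x)$, same reduction to the measure $\nu$. The one difference is that the paper verifies $\calL_\mu Lg'\in L^2$ by a direct pointwise estimate (bounding it by $C/(t+\tfrac{a}{2})$ via the inequality $xe^{-x}\leq e^{-1}$) and only then invokes Lemma~\ref{lma.f1} for $\nu$ to identify the result with $\Gamma_\nu g$; you instead observe that the identity $\calL_\mu Lg'=\calL_\nu\calL g$ together with Lemma~\ref{lma.f1} applied to $\nu$ already gives $\Gamma_\nu g\in L^2$, making the explicit estimate redundant. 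This is a small but genuine economy; the paper's route has the merit of displaying the decay rate, while yours avoids repeating work already done.
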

\begin{proof}
This is a slight modification of Lemma~\ref{lma.f1}.
Let $g\in C^\infty_\comp(\bbR_+)$; let us show that $g'\in\Dom\Gamma_\mu$. Since $\Gamma_\mu=L^*L$, we need to check that $Lg'\in\Dom L^*$, i.e. $\calL_\mu Lg'\in L^2$. 
Integrating by parts, we find that
\[
(\calL g')(x)=x(\calL g)(x). 
\]
Let $\supp g\subset[a,\infty)$; using \eqref{a0a}, we find 
\begin{align*}
\abs{(\calL_\mu Lg')(t)}
&\leq C\int_0^\infty e^{-tx}e^{-ax}x\dd\mu(x)
\\
&=\frac{C}{(t+\frac{a}{2})}\int_0^\infty e^{-(t+\frac{a}{2})x}(t+\tfrac{a}2)xe^{-ax/2}\dd\mu(x)
\leq
\frac{Ce^{-1}}{t+\frac{a}2}\int_0^\infty e^{-ax/2}\dd\mu(x),
\end{align*}
because $xe^{-x}\leq e^{-1}$ for $x>0$. The integral in the right hand side here is finite by our assumption on $\mu$. We conclude that $\calL_\mu Lg'\in L^2$. Moreover, recalling that
\[
-h_\mu'(t)=\int_0^\infty e^{-tx}x\dd\mu(x)=h_\nu(t)
\]
and applying Fubini, we find 
\begin{align*}
(\Gamma_\mu g')(t)&=(\calL_\mu Lg')(t)
=\int_0^\infty e^{-tx}\left\{\int_0^\infty e^{-ts}g(s)\dd s\right\}x\dd\mu(x)
\\
&=-\int_0^\infty h_\mu'(t+s)g(s)\dd s=\int_0^\infty h_\nu(t+s)g(s)\dd s=(\Gamma_\nu g)(t),
\end{align*}
where we have used Lemma~\ref{lma.f1} at the last step (we can do this since $h_\nu$ is square integrable at infinity). This proves \eqref{eq:f10}. 
\end{proof}

\subsection{The case of finite measures $\mu$}
Below is the analogue of Lemma~\ref{thm.a1}. 

\begin{lemma}\label{thm.a1x}
Let $\mu\in\calM$ be a \underline{finite} measure. Then the restriction $\Gamma_\mu|_{C^\infty_{\comp,0}(\bbR_+)}$ is essentially self-adjoint. 
\end{lemma}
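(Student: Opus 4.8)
The plan is to reduce Lemma~\ref{thm.a1x} to the already-established Lemma~\ref{thm.a1} via the integration-by-parts identity \eqref{eq:f10}. Recall that $\Gamma_\mu$ is the self-adjoint operator associated with the form $Q_\mu$, and that by Lemma~\ref{lma.f2} we have $\Gamma_\mu g' = \Gamma_\nu g$ for $g\in C^\infty_\comp(\bbR_+)$, where $\dd\nu(x)=x\dd\mu(x)$. Since $\mu$ is a \emph{finite} measure, the estimate \eqref{b1f} from Lemma~\ref{lma.b2a} shows that $\nu$ satisfies the Carleson condition, so $\Gamma_\nu$ is bounded; in particular $h_\nu$ is trivially square integrable at infinity, and Lemma~\ref{thm.a1} applies to $\nu$ (indeed to any finite measure). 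This puts me in essentially the same situation as the proof of Lemma~\ref{thm.a1} itself, and I expect the argument to mirror it closely.

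First I would set $\Gamma_\mu^{\min}:=\Gamma_\mu|_{C^\infty_{\comp,0}(\bbR_+)}$ and check that it is symmetric and positive semi-definite. Writing $f=g'$ with $g\in C^\infty_\comp(\bbR_+)$ and using \eqref{eq:f10}, I compute
\[
\jap{\Gamma_\mu^{\min} g', g'} = \jap{\Gamma_\nu g, g'} = \int_0^\infty (\calL g)(x)\,\overline{(\calL g')(x)}\,\dd\nu(x) = \int_0^\infty \abs{(\calL g)(x)}^2\, x\, \dd\nu(x) \geq 0,
\]
where the integrals converge absolutely because, for $g\in C^\infty_\comp(\bbR_+)$, the Laplace transform $\calL g$ is bounded and decays exponentially as $x\to\infty$ (cf. \eqref{a0a}). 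This is exactly the computation performed in Lemma~\ref{thm.a1}, now justified by the boundedness of $\Gamma_\nu$ coming from finiteness of $\mu$. Symmetry and positivity of $\Gamma_\mu^{\min}$ follow.

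Since $\Gamma_\mu^{\min}$ is symmetric and positive semi-definite, its deficiency indices in the upper and lower half-planes coincide, so it suffices to show $\Ker\bigl((\Gamma_\mu^{\min})^*+I\bigr)=\{0\}$. Suppose $u\in\Ker\bigl((\Gamma_\mu^{\min})^*+I\bigr)$, meaning $u\perp(\Gamma_\mu^{\min}g'+g')$ for all $g\in C^\infty_\comp(\bbR_+)$. By \eqref{eq:f10} this reads $u\perp(\Gamma_\nu g + g')$ for all such $g$, i.e. $\jap{u,g'}+\jap{u,\Gamma_\nu g}=0$. Now I apply Lemma~\ref{lma.b2} with $A=\Gamma_\nu$, which is bounded, self-adjoint and positive semi-definite; the lemma yields $u=0$. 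This establishes essential self-adjointness of $\Gamma_\mu|_{C^\infty_{\comp,0}(\bbR_+)}$.

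The proof is almost entirely a transcription of Lemma~\ref{thm.a1}, so I do not anticipate a genuine obstacle; the only point requiring care is confirming that finiteness of $\mu$ (rather than the hypothesis \eqref{eq:f1} used in Lemma~\ref{thm.a1}) still guarantees all the ingredients — namely that $\nu$ is Carleson so $\Gamma_\nu$ is bounded, that Lemma~\ref{lma.f2} applies (which needs $h_\nu$ square integrable at infinity, automatic here since $\Gamma_\nu$ is bounded), and that the absolute convergence of the quadratic-form integrals holds. Since finiteness is a stronger and cleaner condition than \eqref{eq:f1} in this reduction, each of these is immediate, and the structural steps (symmetry, positivity, deficiency-index reduction, and the final appeal to Lemma~\ref{lma.b2}) go through verbatim.
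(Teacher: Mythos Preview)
Your proposal is correct and follows essentially the same approach as the paper: the paper's proof also sets $\Gamma_\mu^{\min}=\Gamma_\mu|_{C^\infty_{\comp,0}(\bbR_+)}$, invokes Lemma~\ref{lma.f2} to obtain $\Gamma_\mu^{\min}g'=\Gamma_\nu g$, and then says explicitly that one continues ``exactly as in the proof of Lemma~\ref{thm.a1}'' to get symmetry, positivity, and vanishing deficiency indices via Lemma~\ref{lma.b2} with $A=\Gamma_\nu$. You have simply written out those steps in full, and your checks on the applicability of the ingredients (finiteness of $\mu$ $\Rightarrow$ $\nu$ Carleson $\Rightarrow$ $\Gamma_\nu$ bounded) are accurate.
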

\begin{proof}
This is a slight modification of the proof of Lemma~\ref{thm.a1}. 
Denote 
\[
\Gamma_\mu^{\min}:=\Gamma_\mu|_{C^\infty_{\comp,0}(\bbR_+)}.
\]
By Lemma~\ref{lma.f2}, we have 
\[
\Gamma_\mu^{\min}g'=\Gamma_\nu g, \quad g\in C^\infty_\comp(\bbR_+), 
\]
which is exactly \eqref{z18} in the proof of Lemma~\ref{thm.a1}.
Then, continuing exactly as in the proof of Lemma~\ref{thm.a1}, we find that 
$\Gamma_\mu^{\min}$ is symmetric and positive semi-definite and check that its deficiency indices 
vanish. 
\end{proof}

\subsection{Proof of Theorem~\ref{thm:b2}(ii)}
This is a slight modification of the proof of Theorem~\ref{thm:b1}(ii) in Section~\ref{sec.f3}. Steps 1 and 2 of the proof require no modifications. In Step 3, we simply replace $C^\infty_{\comp}(\bbR_+)$ by $C^\infty_{\comp,0}(\bbR_+)$ everywhere and use Lemmas~\ref{lma.f2} and \ref{thm.a1x} instead of Lemmas~\ref{lma.f1} and \ref{thm.a1}.  

\subsection{Proof of Theorem~\ref{thm:b2}(i)}
First we make a calculation. Let $f\in L^2$ and let $F$ be defined by the integral \eqref{eq:f11}. For any $g\in C^\infty_{\comp}(\bbR_+)$, applying Fubini, we get 
\begin{align*}
\int_0^\infty F(t)\overline{g(t)}\dd t
&=
-\int_0^\infty\left\{\int_0^\infty h_\nu(t+s)f(s)\dd s\right\}\overline{g(t)}\dd t
\notag
\\
&=-\int_0^\infty f(s)\left\{\int_0^\infty h_\nu(t+s)\overline{g(t)}\dd t\right\}\dd s.
\end{align*}
Applying Lemma~\ref{lma.f1} to $\Gamma_\nu$, we get for the integral in brackets
\[
\int_0^\infty h_\nu(t+s)g(t)\dd t=\Gamma_\nu g.
\]
Finally, by Lemma~\ref{lma.f2} we have $\Gamma_\nu g=\Gamma_\mu g'$. Putting this together, we obtain 
\begin{equation}
\int_0^\infty F(t)\overline{g(t)}\dd t
=-\jap{f,\Gamma_\mu g'}. 
\label{eq:f13x}
\end{equation}
Next, we have the following chain of equivalences. 

\begin{itemize}
\item
$F$ is a distributional derivative of a function in $L^2$ if and only if the estimate
\[
\Abs{\int_0^\infty F(t)\overline{g(t)}\dd t}\leq C\norm{g'}
\quad\forall g\in C^\infty_{\comp}(\bbR_+)
\]
holds true. 
\item
By \eqref{eq:f13x}, the latter estimate is equivalent to 
\[
\abs{\jap{f,\Gamma_\mu g'}}\leq C\norm{g'}, 
\quad \forall g\in C^\infty_{\comp}(\bbR_+).
\]
\item
Since $C^\infty_{\comp,0}(\bbR_+)$ is dense in $\Dom\Gamma_\mu$ in the graph norm of $\Gamma_\mu$, the latter estimate is equivalent to 
\[
\abs{\jap{f,\Gamma_\mu v}}\leq C\norm{v}, 
\quad \forall v\in\Dom\Gamma_\mu.
\]
\item
Since $\Gamma_\mu$ is self-adjoint, the latter estimate is equivalent to the inclusion $f\in\Dom\Gamma_\mu$.
\end{itemize}
We have proved that $\Dom\Gamma_\mu$ is as described in the hypothesis of the theorem. 
Finally, if $f\in\Dom\Gamma_\mu$ and $F=G'$ with $G\in L^2$, then by \eqref{eq:f13x} we find
\[
\jap{G,g'}=\jap{f,\Gamma_\mu g'}=\jap{\Gamma_\mu f,g'},
\quad\forall g\in C^\infty_{\comp}(\bbR_+).
\]
and so $G=\Gamma_\mu f$, as claimed. 
The proof of Theorem~\ref{thm:b2}(i) is complete. 
\qed

\section{The non-self-adjoint case: proof of Theorem~\ref{thm:b1a}}\label{sec.c}

\subsection{Unbounded Hankel operators on the Hardy class}
We start by recalling a theorem from \cite{GP2} about unbounded Hankel operators on the Hardy class $H^2(\bbR)$ of the upper half-plane. We will use this theorem in the proof of Theorem~\ref{thm:b1a}. 

We denote by $\Phi f=\widehat{f}$ the (unitary) Fourier transform on $L^2(\bbR)$, 
\[
\widehat{f}(\xi)=\frac1{\sqrt{2\pi}}\int_{-\infty}^\infty f(x)e^{-ix\xi}\dd x. 
\]
We will regard the Hardy class $H^2(\bbR)$ as a subspace of $L^2(\bbR)$, 
\[
H^2(\bbR)=\{f\in L^2(\bbR): \supp\widehat f\subset [0,\infty)\}.
\]
Let $\bbP$ be the orthogonal projection from $L^2(\bbR)$ onto $H^2(\bbR)$. 

We start with \emph{bounded} Hankel operators in $H^2(\bbR)$. For a \emph{symbol} $u\in L^\infty(\bbR)\cap H^2(\bbR)$, we define the Hankel operator $H_u$ in $H^2(\bbR)$ by 
\begin{equation}
H_u f=\bbP(u\cdot \widetilde{f}), \quad f\in H^2(\bbR), \quad \widetilde{f}(\xi)=f(-\xi). 
\label{c2x}
\end{equation}
The boundedness of $H_u$ is evident from the condition $u\in L^\infty$. Regarding the Fourier transform $\Phi$ as a unitary operator from $H^2(\bbR)$ onto $L^2(\bbR_+)$, we see that 
\begin{equation}
\Phi H_u \Phi^*=\Gamma_h,
\quad
h=\frac1{\sqrt{2\pi}}\widehat u.
\label{c2}
\end{equation}
Let us now discuss \emph{unbounded} operators $H_u$ with symbols $u\in H^2(\bbR)$. 
Fix $u\in H^2(\bbR)$ and define the operator $H_u$ by \eqref{c2x} on the domain 
\begin{equation}
\Dom H_u=\{f\in H^2(\bbR): \bbP(u\cdot \widetilde{f})\in H^2(\bbR)\}.
\label{c1}
\end{equation}
Here the condition $\bbP(u\cdot \widetilde{f})\in H^2(\bbR)$ should be understood as follows: for the function $g=u\cdot\widetilde{f}\in L^1(\bbR)$, the Fourier transform $\widehat{g}$, restricted onto $\bbR_+$, belongs to $L^2(\bbR)$. 

In the theorem below, the dense set of ``nice'' functions in $H^2(\bbR)$ is chosen to be the set $\calR$ of rational functions. More precisely, $\calR$ consists of functions of the form $p(x)/q(x)$, where $p$ and $q$ are polynomials with $\deg p<\deg q$ and all zeros of $q$ are located in the open lower half-plane $\Im z<0$. 

\begin{theorem}\cite[Theorem~3.1]{GP2}\label{thm:c1}
Let $u\in H^2(\bbR)$ and let $H_u$ be the Hankel operator with the domain \eqref{c1}. Then:
\begin{enumerate}[\rm(i)]
\item
$H_u$ is closed; 
\item
the set $\calR$ of rational functions is dense in $\Dom H_u$ in the graph norm of $H_u$.
\end{enumerate}
\end{theorem}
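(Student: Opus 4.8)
The plan is to prove (i) by a soft Fourier-analytic argument and to prove (ii) by combining the commutation of $H_u$ with the shift semigroup with a regularisation procedure, in the spirit of the reductions used throughout this paper. For (i), suppose $f_n\to f$ and $H_uf_n\to F$ in $H^2(\bbR)$. Since $u\in H^2(\bbR)$ and $\wt{f_n}\to\wt f$ in $L^2(\bbR)$, the products $u\cdot\wt{f_n}$ converge to $u\cdot\wt f$ in $L^1(\bbR)$, so their Fourier transforms converge uniformly. Restricting to $\bbR_+$ and recalling the interpretation of $\bbP(u\cdot\wt f)$ given after \eqref{c1}, we obtain $\Phi H_uf_n\to\widehat{u\cdot\wt f}\,\big|_{\bbR_+}$ pointwise; comparing with $\Phi H_uf_n\to\Phi F$ in $L^2(\bbR_+)$ forces $\widehat{u\cdot\wt f}\,\big|_{\bbR_+}=\Phi F\in L^2(\bbR_+)$, that is, $f\in\Dom H_u$ and $H_uf=F$. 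This is the exact analogue of the closedness proofs for $L$ and for $\Gamma_\mu$.

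For (ii), first note that $\calR\subset\Dom H_u$: each $r\in\calR$ is bounded on $\bbR$, so $u\cdot\wt r\in L^2(\bbR)$ and hence $\bbP(u\cdot\wt r)\in H^2(\bbR)$. The engine of the proof is the commutation with the shift semigroup. Let $M_{e_\tau}$ denote multiplication by the inner function $\ee^{\ii\tau x}$; this is the $H^2(\bbR)$-model of the right shift, $\Phi M_{e_\tau}=S_\tau\Phi$. Using $\wt{(M_{e_\tau}f)}=\ee^{-\ii\tau x}\wt f$ together with the fact that $\ee^{-\ii\tau x}$ pushes the anti-analytic part of $u\cdot\wt f$ further into the lower half-space, where it is annihilated by $\bbP$, a short computation gives $M_{e_\tau}(\Dom H_u)\subset\Dom H_u$ and the commutation relation $H_uM_{e_\tau}=M_{e_\tau}^*H_u$, which is the Hardy-space form of \eqref{z15} via \eqref{c2}. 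Since $M_{e_\tau}f\to f$ and $M_{e_\tau}^*H_uf\to H_uf$ in $H^2(\bbR)$ as $\tau\to0_+$ (strong continuity of the shift semigroup and its adjoint), we conclude that $M_{e_\tau}f\to f$ in the graph norm of $H_u$. Thus it suffices to approximate functions of the form $g=M_{e_\tau}f\in\Dom H_u$, whose Fourier transforms are supported in $[\tau,\infty)$; equivalently, $g$ extends analytically to $\Im z>0$ with decay $O(\ee^{-\tau\Im z})$.

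It remains to approximate such a regularised $g$ by elements of $\calR$ in the graph norm. The idea is to use the reproducing kernels $k_\lambda(x)=c\,(x-\overline\lambda)^{-1}$ with $\Im\lambda>0$, which belong to $\calR$ and for which $H_uk_\lambda$ can be computed explicitly as a divided difference of $u$, depending continuously on $\lambda$. One writes $g$ as a superposition of such kernels — through a frame expansion over a lattice in the upper half-plane, or a Cauchy-type integral representation — and truncates it to finite linear combinations $r_n\in\calR$. If needed, one first sharpens the regularity of $g$ by a resolvent average of the shift, $R_\lambda g=\int_0^\infty\lambda\ee^{-\lambda t}M_{e_t}g\,\dd t$, which satisfies $H_uR_\lambda g=\wt R_\lambda H_ug$ with $\wt R_\lambda$ a bounded averaging operator built from the $M_{e_t}^*$; then $R_\lambda g\to g$ in the graph norm while $R_\lambda g$ becomes as smooth and as rapidly decaying as desired.

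The main obstacle is precisely this final approximation: because $H_u$ is unbounded, convergence $r_n\to g$ in $H^2(\bbR)$ does not by itself yield $H_ur_n\to H_ug$. All the preceding regularisations are designed to confine the difficulty to functions that are analytic and exponentially decaying in the upper half-plane, for which the error $H_u(r_n-g)$ can be estimated directly, using the disjoint Fourier-support structure already exploited in the commutation relation, so as to force convergence in the graph norm.
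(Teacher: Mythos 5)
Your part (i) is correct and complete: the $L^1$-convergence of $u\cdot\wt{f_n}$ together with the uniform convergence of the Fourier transforms identifies the pointwise limit with the $L^2$ limit, exactly in the style of the closedness arguments for $L$ and $\Gamma_h$ elsewhere in the paper. Likewise, your observations that $\calR\subset\Dom H_u$, that $M_{e_\tau}(\Dom H_u)\subset\Dom H_u$ with $H_uM_{e_\tau}=M_{e_\tau}^*H_u$, and that consequently $M_{e_\tau}f\to f$ in the graph norm as $\tau\to0_+$, are all correct; they are the Hardy-space counterparts of \eqref{z15} and of the ``approximation by shifts'' steps used in Sections 4 and 5.

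The genuine gap is the final step: approximating a function $g\in\Dom H_u$ with $\supp\widehat g\subset[\tau,\infty)$ by rational functions \emph{in the graph norm}. This is where the entire content of the theorem is concentrated, and your proposal does not prove it. Truncating a frame or Cauchy-type representation of $g$ produces $r_n\to g$ in $H^2(\bbR)$, but since $H_u$ is unbounded this gives no control of $H_u(r_n-g)$; and the ``disjoint Fourier-support structure'' you invoke is destroyed at exactly this point, because rational functions have Fourier transforms supported on all of $(0,\infty)$ (finite sums \eqref{z0b}), so $r_n-g$ is \emph{not} supported in $[\tau,\infty)$ and no shift-improved estimate applies to it. The only quantitative tool available at this level of softness is Young's inequality (the estimate \eqref{c4} of the paper, here in the form $\norm{\Gamma_h\psi}\le\norm{h}\,\norm{\psi}_{L^1}$), and it is unusable because $g$, equivalently $\Phi g$, lies only in $L^2$ and not in $L^1$; truncations such as $\1_{(0,N)}\Phi g$ do not converge in graph norm unless one assumes something like $\int_0^\infty t\abs{h(t)}^2\dd t<\infty$, which is not given. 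Your resolvent averages $R_\lambda$ do not repair this: they do commute with $H_u$ by closedness, but they gain only mild decay and never produce $L^1$ membership or rationality. This is precisely why the paper imports the statement from \cite[Theorem~3.1]{GP2} as a black box rather than proving it: Section 5 of the paper runs exactly your soft reductions (shift commutation, Young's inequality) in order to \emph{extend} that theorem to kernels satisfying only \eqref{eq:f1}, while the core density statement for the maximal operator requires the genuinely different and harder arguments of \cite{GP1,GP2}. As it stands, your argument establishes (i) but not (ii).
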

\begin{remark*}
Theorem~3.1 of \cite{GP2} is stated for the anti-linear version of Hankel operators, viz. 
\[
f\mapsto\bbP(u\cdot f^{\#}), \quad f^\#(\xi)=\overline{f(-\xi)}.
\]
However, this version differs from $H_u$ only by a complex conjugation, and trivial calculations show that the theorem stated above is equivalent to \cite[Theorem~3.1]{GP2}. 
\end{remark*}

\subsection{Mapping to $L^2(\bbR_+)$}
Using the unitary equivalence \eqref{c2}, it is easy to translate Theorem~\ref{thm:c1} into the language of Hankel operators $\Gamma_h$ on $L^2(\bbR_+)$. Clearly, condition $u\in H^2(\bbR)$ on the symbol is equivalent to the condition $h\in L^2(\bbR)$ on the kernel function. Let us identify the image of rational functions $\calR\subset H^2(\bbR)$ under the Fourier transform. Representing rational functions as sums of partial fractions, we see that $\Phi(\calR)$ can be characterised as the set of all finite linear combinations 
\begin{equation}
\sum_n p_n(t)e^{-\alpha_n t},
\label{z0b}
\end{equation}
where $p_n$ is a polynomial and $\Re\alpha_n>0$. 

Now we can rewrite Theorem~\ref{thm:c1} as follows.

\begin{theorem}\label{thm:c2}
Let $h\in L^2$ and let $\Gamma_h$ be the Hankel operator defined by the integral \eqref{z00} on the domain 
\[
\Dom \Gamma_h=\{f\in L^2: \Gamma_h f\in L^2\}.
\]
Then:
\begin{enumerate}[\rm(i)]
\item
$\Gamma_h$ is closed; 
\item
the set $\Phi(\calR)$ is dense in $\Dom \Gamma_h$ in the graph norm of $\Gamma_h$.
\end{enumerate}
\end{theorem}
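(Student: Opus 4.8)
The plan is to deduce Theorem~\ref{thm:c2} from Theorem~\ref{thm:c1} via the unitary equivalence \eqref{c2}, translating every object through the Fourier transform $\Phi\colon H^2(\bbR)\to L^2(\bbR_+)$. First I would fix $h\in L^2$ and set $u=\sqrt{2\pi}\,\Phi^{-1}h\in H^2(\bbR)$, so that $\Phi H_u\Phi^*=\Gamma_h$ on bounded symbols and, more importantly, the maximal-domain definitions match up. The key verification is that $\Phi$ carries $\Dom H_u$ (defined by \eqref{c1}) onto $\Dom\Gamma_h$ (defined by the condition $\Gamma_h f\in L^2$). This is where one must be slightly careful: the condition $\bbP(u\cdot\widetilde f)\in H^2(\bbR)$ in \eqref{c1} is interpreted distributionally (the Fourier transform of $u\cdot\widetilde f\in L^1$, restricted to $\bbR_+$, lies in $L^2$), and one checks that under $\Phi$ this becomes exactly the statement that the integral \eqref{z00} defining $\Gamma_h f$ yields an $L^2$ function. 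The commutation with the projection $\bbP$ translates, under $\Phi$, into the restriction to $\bbR_+$ that is already built into the integral operator \eqref{z00}, so the domains coincide and $\Phi H_u\Phi^*=\Gamma_h$ as unbounded operators.

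Once the unitary equivalence of the two closed operators is established, parts (i) and (ii) are immediate consequences of Theorem~\ref{thm:c1}: closedness is preserved under unitary conjugation, giving (i), and $\Phi$ maps the dense set $\calR\subset\Dom H_u$ (dense in the graph norm of $H_u$) to the set $\Phi(\calR)\subset\Dom\Gamma_h$, which is then dense in the graph norm of $\Gamma_h$ because $\Phi$ is a unitary intertwining the two graph norms. This gives (ii). The identification of $\Phi(\calR)$ as the set of functions \eqref{z0b} has already been carried out in the paragraph preceding the theorem (via partial fractions), so no further work is needed there.

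To complete the proof of Theorem~\ref{thm:b1a} I would then note that Theorem~\ref{thm:c2} handles the case $h\in L^2(\bbR)$ globally, whereas Theorem~\ref{thm:b1a} assumes only the weaker local hypothesis \eqref{eq:f1}, namely $h(\,\cdot\,+t)\in L^2(\bbR_+)$ for every $t>0$. The bridge is the shift commutation relation \eqref{z15}: for any $\tau>0$ the shifted kernel $h(\,\cdot\,+\tau)$ is genuinely in $L^2(\bbR_+)$ by \eqref{eq:f1}, so Theorem~\ref{thm:c2} applies to $\Gamma_{h(\cdot+\tau)}=S_\tau^*\Gamma_h$ (equivalently $\Gamma_h S_\tau$). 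One approximates an arbitrary $f\in\Dom\Gamma_h$ by its shifts $S_\tau f\to f$ in the graph norm of $\Gamma_h$ as $\tau\to0$, exactly as in Step~1 of the proof of Theorem~\ref{thm:b1}(ii), and then approximates each $S_\tau f$ by elements of $\Phi(\calR)$—or, after a further mollification, by elements of $C^\infty_\comp(\bbR_+)$—using the $L^2$-result Theorem~\ref{thm:c2}. Part (iii), the formula $\Gamma_h^*=\Gamma_{\overline h}$, follows from a Fubini/duality computation, $\langle\Gamma_h f,g\rangle=\langle f,\Gamma_{\overline h}g\rangle$ on the dense core, combined with the density just established.

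The main obstacle I anticipate is the precise matching of the two distributionally-defined maximal domains under $\Phi$: one must confirm that the ad hoc interpretation of $\bbP(u\cdot\widetilde f)\in H^2(\bbR)$ in \eqref{c1} corresponds, term for term, to the requirement that the absolutely convergent integral \eqref{eq:f2} define an $L^2(\bbR_+)$ function, with no domain discrepancy hidden in the passage between the $L^1$-Fourier transform and the $L^2$-valued Laplace/integral realisation. The reduction from the local hypothesis \eqref{eq:f1} to the global $L^2$ case via shifts is the second delicate point, since one must ensure the shift approximation $S_\tau f\to f$ converges in the graph norm of the \emph{full} operator $\Gamma_h$ and not merely in $L^2$; this is guaranteed by \eqref{z15} together with strong continuity of $S_\tau^*$ at $\tau=0_+$, but it deserves explicit justification.
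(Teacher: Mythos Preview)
Your proposal is correct and matches the paper's approach exactly: the paper treats Theorem~\ref{thm:c2} as an immediate restatement of Theorem~\ref{thm:c1} under the unitary equivalence $\Phi H_u\Phi^*=\Gamma_h$, with the only work being the identification of $\Phi(\calR)$ via partial fractions, and your subsequent sketch of the reduction of Theorem~\ref{thm:b1a} to Theorem~\ref{thm:c2} via shifts is likewise the paper's argument in Sections~6.3--6.8. Your caution about the domain matching is well placed but the paper simply asserts it; the point is that for $u\in H^2(\bbR)$ and $f\in H^2(\bbR)$ one has $u\cdot\widetilde f\in L^1(\bbR)$, and the Fourier transform of this product restricted to $\bbR_+$ is precisely (up to normalization) the convolution integral $\int_0^\infty h(t+s)\widehat f(s)\,\dd s$, so the two maximal domains coincide by inspection.
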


In order to derive Theorem~\ref{thm:b1a} from Theorem~\ref{thm:c2}, we need to 
\begin{itemize}
\item
Replace condition $h\in L^2$ by condition \eqref{eq:f1};
\item
Replace the set $\Phi(\calR)$ by the set $C^\infty_\comp(\bbR_+)$. 
\end{itemize}

\subsection{Continuity of $\Gamma_h f$}
We begin the proof of Theorem~\ref{thm:b1a}. In order to make the arguments below cleaner, we make the following remark: for any $f\in\Dom\Gamma_h$, the function $\Gamma_h f$ is continuous on $\bbR_+$. Indeed, using the shift operator $S_t$, we can write
\begin{equation}
(\Gamma_h f)(t)=\jap{S_{t}^*h,\overline{f}}, \quad t>0.
\label{c3}
\end{equation}
By our assumption, $S_t^* h\in L^2$ for any $t>0$. Therefore, the function $t\mapsto S_t^* h$ is continuous in the norm of $L^2$ for any $t>0$. Therefore by \eqref{c3}, the function $(\Gamma_h f)(t)$ is continuous in $t>0$.

\subsection{Proof of Theorem~\ref{thm:b1a}(i): $\Gamma_h$ is closed}
As usual, the closedness of $\Gamma_h$ is a simple exercise. Suppose $f_n\in\Dom\Gamma_h$ is a sequence such that $f_n\to f$ in $L^2$ and $\Gamma_h f_n\to F$ in $L^2$. From \eqref{c3} we find that $(\Gamma_h f_n)(t)\to (\Gamma_h f)(t)$ for all $t>0$ (in fact, uniformly on any interval separated away from the origin). It follows that $\Gamma_h f=F$ and so $\Gamma_h f\in L^2$; thus, $f\in\Dom\Gamma_h$ as required. 

\subsection{Approximation by shifts}
As in the proof of Theorem~\ref{thm:b1}(ii), we find that
\[
S_\tau(\Dom \Gamma_h)\subset\Dom\Gamma_h
\]
and 
\[
\Gamma_h S_\tau=S_\tau^*\Gamma_h. 
\]
Now let $f\in\Dom\Gamma_h$ and $F=\Gamma_h f$. Again, as in the proof of Theorem~\ref{thm:b1}(ii), we find that
\[
\norm{S_\tau f-f}+\norm{\Gamma_h(S_\tau f-f)}\to0, \quad \tau\to0, 
\]
and so $S_\tau f\to f$ in the graph norm of $\Gamma_h$.

\subsection{Using Theorem~\ref{thm:c2}}
Let $f\in\Dom\Gamma_h$ and $\tau>0$; by the previous step, it suffices to approximate $S_\tau f$ in the graph norm. Observe that 
\[
\Gamma_h S_\tau f=\Gamma_{h_\tau} f, 
\]
where $h_\tau(t)=h(t+\tau)$. By our assumption, $h_\tau\in L^2$ and so we can apply Theorem~\ref{thm:c2} to the operator $\Gamma_{h_\tau}$. This shows that there exists a sequence $f_n\in \Phi(\calR)$ such that $f_n\to f$ in the graph norm of $\Gamma_{h_\tau}$: 
\[
\norm{f_n-f}+\norm{\Gamma_{h_\tau}(f_n-f)}\to0, \quad n\to\infty.
\]
Then we have 
\[
\norm{S_\tau(f_n-f)}+\norm{\Gamma_h S_\tau(f_n-f)}
=
\norm{S_\tau(f_n-f)}+\norm{\Gamma_{h_\tau}(f_n-f)}
\to0
\]
as $n\to\infty$ and so $S_\tau f_n\to S_\tau f$ in the graph norm of $\Gamma_h$. 

At this stage of the proof, we have established that the set 
\[
\{S_\tau f: f\in \Phi(\calR), \  \tau>0\},
\]
is dense in the graph norm of $\Gamma_h$. In order to complete the proof of Theorem~\ref{thm:b1a}(ii), we need to approximate each element $S_\tau f$, $f\in \Phi(\calR)$, by functions in $C^\infty_\comp(\bbR_+)$ in the graph norm of $\Gamma_h$. 

\subsection{Approximation by functions in $C^\infty_\comp(\bbR_+)$}
Let $f\in L^2(\bbR_+)\cap L^1(\bbR_+)$; we claim that the estimate
\begin{equation}
\norm{\Gamma_h S_\tau f}\leq \norm{S_\tau^* h}\norm{f}_{L^1}
\label{c4}
\end{equation}
holds true. This is simply a particular case of Young's inequality (a convolution with an $L^1$-function is a bounded operator on $L^2$)  written in an unfamiliar notation. In any case, it is easy to give a direct proof. 
We have 
\begin{align*}
\abs{(\Gamma_h S_\tau f)(t)}&\leq \int_\tau^\infty \abs{h(t+s)}\abs{f(s-\tau)}\dd s
=\int_0^\infty \abs{h(t+s+\tau)}\abs{f(s)}\dd s,
\end{align*}
and therefore 
\begin{align*}
\norm{\Gamma_h S_\tau f}^2
&\leq
\int_0^\infty\left\{
\int_0^\infty \abs{h(t+s_1+\tau)}\abs{f(s_1)}\dd s_1
\int_0^\infty \abs{h(t+s_2+\tau)}\abs{f(s_2)}\dd s_2
\right\}\dd t
\\
&=
\int_0^\infty\int_0^\infty
\left\{\int_0^\infty \abs{h(t+s_1+\tau)h(t+s_2+\tau)}\dd t\right\}
\abs{f(s_1)f(s_2)}\dd s_1\dd s_2. 
\end{align*}
For the integral over $t$ by Cauchy-Schwarz we have 
\[
\int_0^\infty \abs{h(t+s_1+\tau)h(t+s_2+\tau)}\dd t
\leq
\norm{S_\tau^* h}^2.
\]
Putting this all together yields \eqref{c4}. 

Now suppose $f\in\Phi(\bbR)$; then $f$ is a finite sum of the form \eqref{z0b}, and in particular $f\in L^1\cap L^2$. Clearly, we can find a sequence $f_n\in C^\infty_\comp(\bbR_+)$ such that 
\[
\norm{f_n-f}_{L^2}+\norm{f_n-f}_{L^1}\to 0, \quad n\to\infty.
\]
Then 
\[
\norm{S_\tau (f_n-f)}+\norm{\Gamma_h S_\tau(f_n-f)}
\leq 
\norm{f_n-f}+\norm{S_\tau^* h}\norm{f_n-f}_{L^1}\to0
\]
as $n\to\infty$, and so $S_\tau f_n\to S_\tau f$ in the graph norm of $\Gamma_h$. This completes the proof of Theorem~\ref{thm:b1a}(ii).

\subsection{Proof of Theorem~\ref{thm:b1a}(iii)}
For $f,g\in C^\infty_{\comp}(\bbR_+)$ by Fubini we find
\[
\jap{\Gamma_h f, g}=\jap{f,\Gamma_{\overline{h}}g}.
\]
Since by the previous step $C^\infty_{\comp}(\bbR_+)$ is dense both in $\Dom\Gamma_h$ and in $\Dom\Gamma_{\overline{h}}$, from here we obtain that $\Gamma_h^*=\Gamma_{\overline{h}}$. \qed

\appendix

\section{}

\subsection{Boundedness of $\Gamma_\mu$}
The statement below can be regarded as folklore; part (i)$\Leftrightarrow$(iii) is mentioned in \cite[p.22]{Widom} without proof.

\begin{proposition}\label{prp.app}
Let $\mu\in\calM$ and let $h_\mu$ be the Laplace transform of $\mu$. 
Then  the following are equivalent:
\begin{enumerate}[\rm (i)]
\item
The measure $\mu$ is Carleson, i.e. $\mu(0,a)\leq Ca$, $a>0$;
\item
The kernel function satisfies 
$h_\mu(t)\leq C/t$, $t>0$; 
\item
The Hankel operator $\Gamma_\mu$ is bounded. 
\end{enumerate}
\end{proposition}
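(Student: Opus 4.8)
The plan is to prove the two equivalences (i)$\Leftrightarrow$(ii) and (ii)$\Leftrightarrow$(iii) separately. Three of the four implications are elementary estimates on the Laplace transform $h_\mu(t)=\int_0^\infty e^{-tx}\dd\mu(x)$; the only genuinely operator-theoretic step is the boundedness direction (ii)$\Rightarrow$(iii), which I would handle by a Schur test.

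For (i)$\Rightarrow$(ii) I would set $M(x)=\mu((0,x))$ and integrate by parts in the Stieltjes integral to write $h_\mu(t)=t\int_0^\infty e^{-tx}M(x)\dd x$; the boundary terms vanish since $M(0)=0$ and, by the Carleson bound $M(x)\le Cx$, also $e^{-tx}M(x)\to0$ as $x\to\infty$. The same bound $M(x)\le Cx$ then gives $h_\mu(t)\le Ct\int_0^\infty e^{-tx}x\,\dd x=C/t$. Conversely, for (ii)$\Rightarrow$(i) I would restrict the integral defining $h_\mu(t)$ to the interval $(0,1/t)$, where $e^{-tx}\ge e^{-1}$, obtaining $h_\mu(t)\ge e^{-1}\mu((0,1/t))$; together with (ii) this yields $\mu((0,1/t))\le eC/t$, which is the Carleson condition with $a=1/t$.

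For the boundedness, (ii)$\Rightarrow$(iii), the kernel $h_\mu(t+s)$ is nonnegative, so I would apply the Schur test with the weight $p(t)=t^{-1/2}$, whose exponent is forced by the homogeneity of the comparison kernel $1/(t+s)$. Using $h_\mu(u)\le C/u$ and the substitution $s=tu$ reduces the test inequality to $\int_0^\infty h_\mu(t+s)s^{-1/2}\dd s\le Ct^{-1/2}\int_0^\infty \frac{u^{-1/2}}{1+u}\dd u=\pi C\,t^{-1/2}$, giving $\norm{\Gamma_\mu}\le\pi C$. For the reverse implication (iii)$\Rightarrow$(ii) I would test against the indicators $f_\tau=\1_{(\tau,2\tau)}$, for which $\norm{f_\tau}^2=\tau$. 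Since $h_\mu$ is decreasing (it is the Laplace transform of a positive measure), $\jap{\Gamma_\mu f_\tau,f_\tau}=\int_\tau^{2\tau}\!\int_\tau^{2\tau}h_\mu(t+s)\dd s\,\dd t\ge\tau^2h_\mu(4\tau)$, while $\jap{\Gamma_\mu f_\tau,f_\tau}\le\norm{\Gamma_\mu}\,\tau$, so that $\tau h_\mu(4\tau)\le\norm{\Gamma_\mu}$, i.e. $h_\mu(t)\le 4\norm{\Gamma_\mu}/t$.

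The main obstacle is (ii)$\Rightarrow$(iii): boundedness cannot be read off pointwise from the kernel, so the weight in the Schur test must be chosen correctly. Scaling singles out $p(t)=t^{-1/2}$ and turns the estimate into the convergent Beta integral $\int_0^\infty u^{-1/2}(1+u)^{-1}\dd u=\pi$; once this weight is identified the computation is routine, and the remaining three implications are direct. A minor point to keep in mind is that in (iii)$\Rightarrow$(ii) the test functions $f_\tau$ are not in $C^\infty_\comp(\bbR_+)$, but this causes no difficulty once $\Gamma_\mu$ is known to be bounded and hence defined on all of $L^2$.
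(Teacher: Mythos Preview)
Your argument is correct. The implication (i)$\Rightarrow$(ii) is identical to the paper's, and your Schur test for (ii)$\Rightarrow$(iii) is exactly the computation underlying the paper's one-line appeal to the boundedness of the Carleman operator, so those two steps agree in substance.

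The organisation differs slightly. The paper closes the cycle (i)$\Rightarrow$(ii)$\Rightarrow$(iii)$\Rightarrow$(i), proving (iii)$\Rightarrow$(i) by testing with the exponentials $f_\eps(t)=e^{-\eps t}$ and using the quadratic-form representation $\jap{\Gamma_\mu f,f}=\int_0^\infty\abs{\calL f}^2\dd\mu$, which gives $\mu((0,a))\le 2\norm{\Gamma_\mu}a$ after choosing $\eps=a$. You instead prove the two equivalences (i)$\Leftrightarrow$(ii) and (ii)$\Leftrightarrow$(iii) separately: your (iii)$\Rightarrow$(ii) uses indicators $\1_{(\tau,2\tau)}$ together with the monotonicity of $h_\mu$, and you supply the extra direct implication (ii)$\Rightarrow$(i) via the trivial lower bound $h_\mu(t)\ge e^{-1}\mu((0,1/t))$. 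Your route is marginally more elementary in that it never invokes the Laplace-transform form of $Q_\mu$; the paper's route is slightly shorter since only three implications are needed. Both yield explicit constants of the same order.
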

\begin{proof}
(i)$\Rightarrow$(ii):
Integrating by parts, we find
\[
h_\mu(t)
=t\int_0^\infty e^{-t x}\mu((0,x))\dd x
\leq Ct\int_0^\infty e^{-t x}x\dd x
\leq C/t.
\]
(ii)$\Rightarrow$(iii): Since the Carleman operator (corresponding to $h_\mu(t)=1/t$) is bounded with the norm $\pi$, we find 
\[
\jap{\Gamma_\mu f,f}
\leq C \int_0^\infty \int_0^\infty \frac{\abs{f(t)}\abs{f(s)}}{t+s}\dd t\, \dd s\leq C\pi \norm{f}^2.
\]
(iii)$\Rightarrow$(i): Assume (iii).
Take $f_\eps(t)=e^{-\eps t}$, then $\norm{f_\eps}^2=1/2\eps$, and 
\begin{align*}
\norm{\Gamma_\mu}
&\geq2\eps\jap{\Gamma_\mu f_\eps,f_\eps}
=2\eps\int_0^\infty \abs{\calL f_\eps(x)}^2\dd\mu(x)
=2\eps\int_0^\infty (\eps+x)^{-2}\dd\mu(x)
\\
&\geq2\eps\int_0^a (\eps+x)^{-2}\dd\mu(x)
\geq
2\eps(\eps+a)^{-2}\mu((0,a)).
\end{align*}
Now taking $\eps=a$, we get $\mu((0,a))\leq 2\norm{\Gamma_\mu}a$.
\end{proof}

\end{document}